\documentclass[12pt,oneside,reqno]{amsart}

\usepackage{amsmath, amsthm, amsfonts, amsfonts,amssymb,amscd,amsmath,latexsym,enumerate,verbatim, calc, quotmark, mathtools, txfonts, calrsfs, imakeidx, fancyhdr, listings, lipsum, tkz-euclide, amsthm}%, setspace}authblk

\usepackage{mathtools}
\DeclarePairedDelimiter{\ceil}{\lceil}{\rceil}

\usepackage[T1]{fontenc}
\usepackage[utf8]{inputenc}

\usepackage[all, cmtip]{xy}

\usepackage[pagebackref]{hyperref}
\hypersetup{
     colorlinks = true, linkcolor = red,
     citecolor   = blue,
     urlcolor    = blue,
}

\newtheorem{innercustomgeneric}{\customgenericname}
\providecommand{\customgenericname}{}
\newcommand{\newcustomtheorem}[2]{%
  \newenvironment{#1}[1]
  {%
   \renewcommand\customgenericname{#2}%
   \renewcommand\theinnercustomgeneric{##1}%
   \innercustomgeneric
  }
  {\endinnercustomgeneric}
}

\newcustomtheorem{customthm}{Theorem}
\newcustomtheorem{customqn}{Question}
\def\NZQ{\mathbb}               % the font for N,Z,Q,R,C

\def\Z{{\NZQ Z}}

\def\m{\mathfrak{m}}
\def\n{\mathfrak{n}}
\def\p{\mathfrak{p}}
\def\q{\mathfrak{q}}

\def\i{\mathfrak{i}}
\def\j{\mathfrak{j}}

\DeclareMathOperator*{\Hom}{Hom}
\DeclareMathOperator*{\Ho}{H}

\DeclareMathOperator*{\depth}{depth}
\DeclareMathOperator*{\Tor}{Tor}
\DeclareMathOperator*{\Ext}{Ext}
\DeclareMathOperator*{\Zi}{Z}
\DeclareMathOperator*{\Bo}{B}

\DeclareMathOperator*{\Ch}{char}

\DeclareMathOperator*{\ann}{ann}
\DeclareMathOperator*{\soc}{socle}
\DeclareMathOperator*{\gr}{gr}
\DeclareMathOperator*{\lo}{ll}
\DeclareMathOperator*{\edim}{edim}
\DeclareMathOperator*{\mx}{max}
\DeclareMathOperator*{\I}{I}

\DeclareMathOperator*{\syz}{Syz}
\DeclareMathOperator*{\cn}{\mathfrak{c}}

\DeclareMathOperator*{\pd}{pd}
\DeclareMathOperator*{\cdepth}{codepth}

\newtheorem{theorem}{Theorem}[section]
\newtheorem{lemma}[theorem]{Lemma}
\newtheorem{corollary}[theorem]{Corollary}
\newtheorem{proposition}[theorem]{Proposition}
\newtheorem{remark}[theorem]{Remark}

\newtheorem{definition}[theorem]{Definition}
\newtheorem*{definition*}{Definition}
\newtheorem*{examples*}{Examples}

\newtheorem*{acknowledgement}{Acknowledgement}

\title{A criterion for modules over Gorenstein local rings to have rational Poincar\'e series
}
\author{Anjan Gupta}

\address{Department of Mathematics\\
Indian Institute of Science Education and Research Bhopal\\
Bhopal Bypass Road, Bhopal, Madhya Pradesh, India. Pin - 462066.}
\email{agmath@gmail.com}

\thanks{Corresponding author: Anjan Gupta; 
{\it email: agmath@gmail.com}}

\begin{document}
\begin{abstract} 
We prove that modules over an Artinian Gorenstein local ring $R$ have rational Poincar\'e series sharing a common denominator if  $R/\soc(R)$ is a Golod ring. If $R$ is a Gorenstein local ring with square of the maximal ideal being generated by at most two elements, we show that modules over $R$ have rational Poincar\'e series sharing a common denominator. By a result of \c Sega, it follows that $R$ satisfies the Auslander-Reiten conjecture. 
We provide a different proof of a result of Rossi and \c Sega \cite{rossi} concerning rationality of Poincar\'e series of modules over compressed Gorenstein local rings.  
We also give a new proof of the fact that modules over Gorenstein local rings of codepth at most three have rational Poincar\'e series sharing a common denominator, which is originally due to Avramov, Kustin and Miller \cite{av4}.

\end{abstract}
\maketitle 

\noindent {\it Mathematics Subject Classification: 13D02, 13D40, 13H10.}

\noindent {\it Key words: Poincar\'e series, Golod rings, DG algebras, fibre products, connected sums.}

\section{introduction}
Let $R$ be a commutative Noetherian local ring with maximal ideal $\m$ and residue field $k = R / \m$. Let $M$ be a finitely generated module over $R$.  The Poincar\'e series of $M$ over $R$ is a formal power series in $\Z[|t|]$ defined as follows: 
\[P^R_M(t) = \sum_{i \geq 0} \beta^R_i(M) t^i \in \Z[|t|],\]
where $\beta^R_i(M) =  \dim_k \Tor^R_i(M, k)$ denotes the $i$-th Betti number of $M$. 
We say that a formal power series $P(t) \in \Z[|t|]$ is a rational function if there exists a polynomial $g(t) \in \Z[t]$ such that $g(t) P(t)$ is a polynomial in $\Z[t]$.
An example due to Anick shows that the Poincar\'e series $P^R_k(t)$ is not a rational function in general, cf. \cite{anick}.
B\o gvad observed that $P^R_k(t)$ may not be a rational function even if $R$ is a Gorenstein ring, cf. \cite{bogvad}.

Following Roos we say that a ring R is good if there exists a polynomial $d_R(t) \in \Z[t]$ such that $d_R(t)P_M^R (t) \in \Z[t]$ for every finitely generated $R$-module $M$ and bad otherwise, cf. \cite[Definition 2.1]{roos2}. 
Roos proved that bad rings exist, cf. \cite[Theorem 2.4]{roos2}. Nevertheless there are an abundance of good rings, e.g. regular local rings, local complete intersections, cf. \cite[Corollary 4.2]{gul1}. We refer to \cite{av4}, \cite{av2} for more examples of good rings and a detailed account  of applications of rationality of Poincar\'e series. 

In this paper, we use $\mu(-)$ to denote the minimal number of generators. The embedding dimension of $R$ ($= \dim_k {\m/ \m^2}$) is denoted by $\edim(R)$. Let $\hat{R}$ denote the $\m$-adic completion of $R$. 
%Note that $\hat{R} = R$ if $R$ is an Artinian ring. 
By Cohen's structure theorem, there is a regular local ring $Q$ with maximal ideal $\n$ and a surjective ring homomorphism $\eta : Q \rightarrow \hat{R}$ such that $\ker \eta = I \subset \n^2$. The map $\eta$ is called a minimal Cohen presentation of $R$. The Loewy length of $R$ is defined as $\lo(R) = \max\{i : \m^i \neq 0\}$ if $R$ is Artinian and infinity otherwise. 

We recall a few more examples of good rings collected from existing literature. Precise reference is given with each of the examples.

\begin{examples*} 
 Let $R$ be a Gorenstein local ring such that $\edim(R) = n \geq 2$, $\lo(R) = s$ and $\mu(I) = r$. If $R$ satisfies one of the conditions (1)-(3) below, then $P^R_k(t) = \frac{(1 + t)^n}{d_R(t)}$ for some polynomial $d_R(t) \in \Z[t]$ and $d_R(t)P_M^R (t) \in \Z[t]$ for every finitely generated $R$-module $M$.

\begin{enumerate}
\item
$R$ is a compressed Artinian Gorenstein ring 
%that is R has the maximal length among all local Artinian Gorenstein rings with the same embedding dimension and Loewy length, 
(see Definition \ref{comp}) and $s \geq 2, s \neq 3$. If $\eta : Q \rightarrow R$ is a minimal Cohen presentation of $R$, then the polynomial $d_R(t)$ is given by $1 - t(P^Q_R(t) - 1) + t^{n+1}(1+t)$; cf. Rossi and \c Sega \cite[Theorem 5.1]{rossi},

\item
$R$ is an Artinian Gorenstein ring and $\mu(\m^2) = 1$. The polynomial $d_R(t)$ is given by $1 - nt + t^2$; 
cf. Sally \cite[Theorem 2]{sally}, Croll et al. \cite[Theorem 5.4]{amenda},

\item
$R$ is not a complete intersection and $\cdepth(R) = \edim(R) - \depth(R) \leq 3$. The polynomial $d_R(t)$ is equal to $1 - rt^2 - rt^3 + t^5$; cf. Wiebe \cite[Satz 9]{wiebe}, Avramov et al. \cite[Theorem 6.4]{av4}.
\end{enumerate}
%Then, the Poincar\'e series is given by \(P^R_k(t) = \displaystyle \frac{(1 + t)^n}{d_R(t)}\) for some polynomial $d_R(t) \in \Z[t]$ and $d_R(t)P_M^R (t) \in \Z[t]$ for every finitely generated $R$-module $M$.
\end{examples*}
The main objective of the present article is to give a criterion for Gorenstein local rings to be good, which provides a common method to prove the good property in each of the above examples. As a new application we show that if $R$ is an Artinian Gorenstein ring and $\mu(\m^2) = 2$, then $R$ is a good ring.

We recall a few definitions. 
Let $\phi: R \rightarrow S$ be a surjective homomorphism of local rings and $k$ be the common residue field of $R$ and $S$.
From the standard change of rings spectral sequence of $\Tor$%$\Tor^S_p(k, \Tor^R_q(S, k)) \underset{p}{\Rightarrow} \Tor^R_{p + q}(k, k)$
, Serre proved the following term-wise inequality of power series:
\[
P^S_k(t) \prec \frac{P^R_k(t)}{1 - t(P^R_S(t) - 1)}.
\]
The homomorphism $\phi$ is called a Golod homomorphism if the above inequality is an equality.
% We say a local ring $R$ has the Backelin-Roos property if there is a surjective Golod homomorphism from a complete intersection onto $R$. 
 The most widespread method to show that a ring $R$ is good is to use 
 a result of Levin (Theorem \ref{prl9.5}) which states that a ring $R$ is good if there is a surjective Golod homomorphism from a complete intersection onto $R$.

Let $\eta : Q \twoheadrightarrow \hat{R}$ be a minimal Cohen presentation of $R$. We say that $R$ is a Golod ring if $\eta$ is a Golod homomorphism. Let $\edim(R) = n$ and $K^R$ denote the Koszul complex of $R$ on a minimal set of generators of maximal ideal $\m$. It follows that $R$ is a Golod ring whenever one has \(P^R_k(t) = \displaystyle \frac{(1 + t)^n}{1 - \sum_{i = 1}^n \dim_k \Ho_i(K^R) t^{i+1}}\). We refer to \cite[\S 3]{av} for more details on Golod rings and Golod homomorphisms. 
The main result of the present article is the following:

\begin{customthm}{I} \label{intro2}
Let $R$ be an Artinian Gorenstein local ring of embedding dimension $n \geq 2$ such that $R/ \soc(R)$ is a Golod ring. Let $\eta : Q \rightarrow R$ be a minimal Cohen presentation, $\n$ denote the maximal ideal of $Q$ and $I = \ker(\eta) \subset \n^2$. Then the following hold.

\begin{enumerate}
\item
For any $f \in I \setminus \n I$, the induced map $Q/ (f) \twoheadrightarrow R$ is a Golod homomorphism.
% i.e. $R$ has the Backelin-Roos property.

\item
Let $d_R(t) =1 - t(P^Q_R(t) - 1) + t^{n+1}(1+t)$. Then for any $R$-module $M$ we have $d_R(t) P^R_M(t) \in \Z[t]$.
\end{enumerate}
\end{customthm}

It is worth noting that if $R$ is an Artinian Gorenstein ring, $\edim(R) \geq 2$ and $R/ \soc(R)$ is a Golod ring, then with the notations used in the above theorem, one has  $P^R_k(t) = \displaystyle \frac{(1 + t)^n}{d_R(t)}$ by a result of Rossi and \c Sega, cf. \cite[Proposition 6.2]{rossi}. Therefore, the statement (2) is an immediate consequence of statement (1) and the result of Levin.

The following is proved as an application of Theorem \ref{intro2}.

\begin{customthm}{II}\label{intro3.5}
Let $R$ be an Artinian Gorenstein local ring with maximal ideal $\m$ and residue field $k$. Let $M$ be a finitely generated $R$-module. Assume that $\edim(R) = n$ and $\mu(\m^2) \leq 2$. 
%Let $\eta : Q \twoheadrightarrow R$ be a minimal Cohen presentation, $\n$ be the maximal ideal of $Q$ and $I = \ker \eta \subset \n^2$. 
Then the following hold.

\begin{enumerate}
\item
If $n= 1$, then $\displaystyle P^R_k(t) = \frac{1}{1 - t}$ and $(1- t)P^R_M(t) \in \Z[t]$,

\item
If $n \geq 2$, then $\displaystyle P^R_k(t) = \frac{1}{1 - nt + t^2}$ and $(1+t)^{n}(1 - nt + t^2)P^R_M(t) \in \Z[t]$,

\item
If $\Ext^i(M, M) = 0$ for all $i \geq 1$, then $M$ is a free $R$-module.
\end{enumerate}
\end{customthm}
The statement (3) follows from  statements (1) and (2) by an argument of \c Sega, cf. \cite{sega1}. It implies that $R$ satisfies the Auslander-Reiten conjecture, cf. \cite{auslan}.

The rings considered in Theorem \ref{intro3.5} are called stretched when $\mu(\m^2) = 1$, respectively almost stretched when $\mu(\m^2) = 2$ (see Definition \ref{intro0}). Stretched Cohen-Macaulay local rings were introduced by Sally in \cite{sally}.  She proved that $P^R_k(t)$ is rational for such a ring $R$, cf. \cite[Theorem 2]{sally}. Later  Elias and Valla introduced almost stretched Cohen-Macaulay local rings. 
%introduced almost stretched Cohen-Macaulay local rings in \cite{elias} and 
They proved  that if  $R$ is an almost stretched Gorenstein local ring and the residue field $k$ of $R$ has characteristic zero, then $P^R_k(t)$ is rational, cf. \cite[Theorem 1.1]{elias1}. 
In a recent article \cite[Corollary 5.6]{amenda}, stretched Cohen-Macaulay local rings are shown to be good. Using Theorem \ref{intro3.5}, we prove that stretched Cohen Macaulay and almost stretched Gorenstein rings are good without any assumption on residue fields. We also prove that such rings satisfy the Auslander-Reiten conjecture.

Now we briefly describe the organisation of the article. In \S 2, we prove Theorem \ref{intro2}. The proof extensively uses a characterisation theorem for Golod algebras (see Theorem \ref{prl7}) and chain derivations on acyclic closures whose construction dates back to the work of Gulliksen.
The connected sum of Gorenstein local rings 
%with a common residue field $k$ 
is introduced in \cite{an3} (see Definition \ref{prl10}). In \S 3, we provide a criterion for connected sum decompositions of Gorenstein local rings. We show that if $R$ is an Artinian Gorenstein local ring with maximal ideal $\m$ and $\mu(\m^2) \leq 2$, then $R$ decomposes as a connected sum unless $\mu(\m) \leq 2$ (Corollary \ref{ascs}). We use this decomposition to show that quotients of such a ring $R$ by non zero powers of maximal ideal $\m$ are Golod rings (Lemma \ref{ds4}). This fact is crucially used in the proof of Theorem \ref{intro3.5}.
The final section \S 4 contains new proofs of Examples (1) and (3) using Theorem \ref{intro2}. We identify a certain quotient $C$ of the Koszul algebra $K^R$ of an Artinian Gorenstein ring $R$ such that $R$ is a surjective image of a complete intersection under a Golod homomorphism whenever $C$ is a Golod DG algebra (\S \ref{prl4}). We show that for the ring considered in examples (1) and (3), this quotient is a Golod algebra. We make it a point to advertise here that our versions are slightly stronger than the earlier ones in both examples since we constructed Golod homomorphisms from hypersurfaces given by any choice of generator belonging to a minimal generating set of the defining ideal. 
%By a similar approach, Theorem \ref{sbr6} follows. 

We conclude with a remark that our approach only constructs Golod homomorphisms from hypersurface rings. We hope that the present approach can be generalised further to find criteria for existence of Golod homomorphisms from complete intersections of higher codimension.

All rings in this article are Noetherian local rings with $1\not=0$. All modules are nonzero and finitely generated.
Throughout this article, the expression ``local ring $(R,\m, k)$" refers to a commutative Noetherian local ring $R$ with maximal ideal $\m$ and residue field $k = R/\m$. When information on the residue field is not necessary, we denote a local ring $R$ with maximal ideal $\m$ simply by $(R, \m)$.

%%%%%%%%%%%%%%%%%%%%%%%%%%%%%%%%%%%%%%%%%%%%%%%%%%%%%%%%%%%%%%%%%

\section{The main result}
Let $(R, \m, k)$ be a local ring. A DG algebra $(A, \partial)$ over the ring $R$ consists of a non-negatively graded strictly skew-commutative $R$-algebra $A = \oplus_{i \geq 0} A_i$ such that $ A_0 = R/ I$ for some ideal $I$ of $R$ and an $R$-linear differential map $\partial$ of degree $-1$ satisfying Leibniz rule. A DG-algebra homomorphism $\phi : A \rightarrow B$ is a chain map of complexes which induces a ring homomorphism $\phi^{\#} : A^{\#} \rightarrow B^{\#}$ between underlying skew-commutative rings $A^{\#}$, $B^{\#}$ after forgetting the differential maps on $A$ and $B$. The DG-algebra $B$ is called a semi-free extension of $A$ if $B^{\#}$ is a free module over $A^{\#}$. 

The DG algebra $(A, \partial)$ is augmented if it is equipped with a surjective DG algebra homomorphism $\epsilon : A \twoheadrightarrow k$. If $\tilde{\epsilon} : \Ho(A) \rightarrow k$ is the induced map on homology, we set $\I A = \ker {\epsilon}$, $\I\Ho(A) = \ker \tilde{\epsilon}$ and $\I \Zi(A) = \I A \cap \Zi(A)$. 
We say that the DG algebra $(A, \partial)$ is minimal if $\partial(A) \subset \m A$. A minimal DG algebra $A$ is augmented naturally with the surjective map $\epsilon = q \circ pr$ where $pr : A \twoheadrightarrow A_0$ is the projection and $q: A_0 \twoheadrightarrow k$ is the natural quotient map. We refer to \cite{av} for more information on DG algebras and related terminologies.

\subsection{Tate resolutions}
Tate described a method to construct a DG algebra resolution of the residue field $k$ over $R$. The method involves an iterated process of adjoining exterior variables to kill cycles of even degrees and divided powers variables to kill cycles of odd degrees  starting from $R$. In literature, this construction is known as Tate resolution.
Later Gulliksen proved that if the number of variables added at each step of killing cycles of a certain degree is the minimum possible, the resulting Tate resolution  becomes a minimal free resolution of the residue field $k$.  
In this case, the DG algebra is called the acyclic closure of $k$ over $R$ which is unique up to isomorphism of DG$\Gamma$ algebras. We refer the reader to  \cite[\S 6]{av}, \cite[Chapter 1]{gul} for more details.

In this article, by Tate resolution we mean a surjective $R$-linear quasi-isomorphism  $ \epsilon : R\langle X \rangle \twoheadrightarrow k$ where $R\langle X \rangle$ is the acyclic closure of $k$. The adjoined set of variables $X = \{X_i : i \geq 1\}$ is ordered such that $1 \leq \deg(X_i) \leq \deg(X_j)$ for $i< j$. Note that by construction the acyclic closure $R\langle X \rangle$ is a semi-free extension of $R$.

%satisfies the following: 
%\begin{enumerate}
%\item $X_i$ is a divided powers variable when $\deg(X_i)$ is even and an exterior variable when $\deg(X_i)$ is odd. Moreover $1 \leq \deg(X_i) \leq \deg(X_j)$ for $i< j$.
%
%\item 
%If $\partial$ denotes the differential on $R\langle X\rangle$, then $\{\partial(X_i)$ : $\deg(X_i) = 1\}$ is a minimal generating set of the maximal ideal $\m$ and homology classes of cycles in  $\{\partial(X_i) :  \deg(X_i) = n\}$ minimally generate $\Ho_{n-1}(R\langle X\rangle)$ for all $n \geq 2$.
%\end{enumerate}

An $R$-linear derivation of degree $n$ on the acyclic closure $R\langle X\rangle$ is an $R$-linear map $\eta : R\langle X\rangle \rightarrow R\langle X\rangle$ of degree $n$ satisfying the following properties :  

\begin{enumerate}
\item
$\eta(R) = 0$ ($R$-linearity);

\item
$\eta$ satisfies the Leibniz rule, i.e.  $\eta(uv) = \eta(u)v + (-1)^{n\deg(u)}x\eta(v)$ for $u, v \in R\langle X\rangle$;

\item 
$\eta(X_i^{(i)}) = \eta(X_i) X_i^{(i-1)}$, $X_i^{(i)}$ being the $i$-th divided power of a variable $X_i$ of  even positive degree.
\end{enumerate}
The derivation $\eta$ is called a chain derivation if it commutes with the differential $\partial$ of $R\langle X\rangle$ in the graded sense, i.e. $\eta \circ \partial = (-1)^n \partial \circ \eta$. 
 
Gulliksen in \cite[Theorem 1.6.2]{gul} constructed a sequence of $R$-linear chain derivations $\eta_j$ on the acyclic closure $R\langle X\rangle$ such that $\eta_j(X_j) = 1$ and $\eta_j(X_i) = 0$ for $i < j$. 

%Using these chain derivations, he proved that the Tate resolution $ \epsilon : R\langle X \rangle \twoheadrightarrow k$ is minimal, i.e. $\partial(R\langle X \rangle) \subset \m R\langle X \rangle$. The proof is also available in \cite[Lemma 6.3.3]{av}.

\subsection{Golod algebras}\label{prl4}
% If $W = \oplus_{i \geq 0}W_i$ is a graded $k$-vector space, we define the Hilbert series $\Ho_W(t) = \sum_{i \geq 0} \dim_k W_it^i \in \ZZ[|t|]$. Let $(R, \m, k)$ be a local ring. Let $A$ be a minimal DG algebra over $R$ such that $\Ho_0(A) = k$ and each $A_i$ is a free $R$-module of finite rank. Then there is a term-wise inequality of power series $P^R_k(t) \prec \frac{\Ho_{A \otimes k}(t)}{1 - t (\Ho_{\Ho(A)}(t) - 1)}$. Following Levin \cite[Chapter 1, \S 4]{lev3}, we define $A$ to be a Golod algebra when equality holds. 
 
An augmented DG algebra $A$ over $R$ with augmentation map $\epsilon : A \twoheadrightarrow k$ is called a Golod algebra if $A$ admits a trivial Massey operation, i.e. 
there is a graded $k$-basis $\mathfrak{b}_R = \{h_{\lambda}\}_{\lambda \in \Lambda}$ of $\I \Ho(A)$, a function $\mu : \sqcup_{i = 1}^{\infty} \mathfrak{b}_R^i \rightarrow  A$ such that $\mu(h_\lambda) \in \I \Zi(A)$ with  $cls(\mu(h_\lambda))= h_\lambda$ and setting $\bar{a} = (-1)^{i+1}a$ for $a \in A_i$ one has 
\[ \partial \mu(h_{\lambda_1}, \ldots ,h_{\lambda_p}) = \sum_{j = 1}^{p-1}\overline{\mu(h_{\lambda_1}, \ldots ,h_{\lambda_j})}\mu(h_{\lambda_{j + 1}}, \ldots ,h_{\lambda_p}).\]

The following is proved in \cite[Theorem 1.5]{lev3} and also follows from \cite[Theorem 1.1]{lev5}. 

\begin{theorem}\label{prl7}
Let $f : (R, \m) \twoheadrightarrow (S, \n)$ be a surjective homomorphism of local rings with common residue field $k$ and $\epsilon : R\langle X \rangle \twoheadrightarrow k$ be a DG algebra resolution of $k$ over $R$. Set 
$A = R\langle X \rangle \otimes_R S$. Consider $A$ augmented with the augmentation $\epsilon \otimes_R S$. Then the following are equivalent:

\begin{enumerate}
\item
The DG algebra $A$ is a Golod algebra.

\item
The map $f$ is a Golod homomorphism.

\item
The induced maps $\Tor^R(k, k) \rightarrow \Tor^S(k, k)$ and $\Tor^R(\n, k) \rightarrow \Tor^S(\n, k)$ are injective. 
\end{enumerate}
\end{theorem}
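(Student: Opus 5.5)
The plan is to pass through (1)$\Leftrightarrow$(2) by an explicit Golod-type resolution, and then to compare (2) with (3) through Poincaré series.

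Since $R\langle X\rangle$ is a semi-free resolution of $k$ by free $R$-modules, the DG algebra $A=R\langle X\rangle\otimes_R S$ is a complex of free $S$-modules with $\Ho(A)\cong\Tor^R(k,S)$. Because the acyclic closure is minimal, $A\otimes_S k\cong R\langle X\rangle\otimes_R k$ has zero differential. Hence the Hilbert series of $A\otimes_S k$ is $P^R_k(t)$, and that of $\Ho(A)$ is $P^R_S(t)$.

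For (1)$\Rightarrow$(2), I would use Golod's construction. Given a trivial Massey operation $\mu$ on a basis $\{h_\lambda\}$ of $\I\Ho(A)$, form the free $S$-module
\[
F=A\otimes_k T(s\,\I\Ho(A)),
\]
where $T$ is the tensor algebra on the suspension. Its differential is
\[
d(a\otimes sh_{\lambda_1}\otimes\cdots\otimes sh_{\lambda_p})=\partial a\otimes\cdots+\sum_j \pm\, a\,\mu(h_{\lambda_1},\ldots,h_{\lambda_j})\otimes sh_{\lambda_{j+1}}\otimes\cdots\otimes sh_{\lambda_p}.
\]
The Massey identity gives $d^2=0$. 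A filtration by tensor length shows $F$ is acyclic with $\Ho_0=k$. I will also arrange that $\mu$ takes values in $\m A$ plus decomposables of positive degree, so that $F$ is minimal. Its Hilbert series is then
\[
\frac{P^R_k(t)}{1-t(P^R_S(t)-1)},
\]
so Serre's inequality is an equality, i.e.\ $f$ is Golod.

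For (2)$\Rightarrow$(1), I would build $\mu$ inductively on the total degree of $(h_{\lambda_1},\ldots,h_{\lambda_p})$. The right-hand side of the Massey identity is always a cycle, and the only obstruction is its homology class. If some class is nonzero, the complex $F$ built up to that degree cannot be extended minimally. Killing that class costs Betti numbers, which yields a strict inequality in Serre's bound and contradicts (2). I expect making this obstruction argument precise to be the main obstacle. Concretely, I must show that a nonzero obstruction class really forces $\beta^S_i(k)$ below the Serre bound in some degree $i$, rather than merely preventing the particular construction. My first attempt will be to compare with the Avramov spectral sequence computing $\Tor^S(k,k)$ from the bar construction on $A$: a nontrivial Massey product gives a nonzero differential on a page beyond $E^2$, hence strict inequality.

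For (2)$\Leftrightarrow$(3), I would use the exact sequence $0\to\n\to S\to k\to 0$. This gives $\Tor^S_{i+1}(k,k)\cong\Tor^S_i(\n,k)$, and over $R$ a long exact sequence linking $\Tor^R(\n,k)$, $\Tor^R(S,k)$ and $\Tor^R(k,k)$. In the change of rings spectral sequence
\[
\Tor^S_p(k,\Tor^R_q(S,k))\Rightarrow\Tor^R_{p+q}(k,k),
\]
equality in Serre's bound holds exactly when all differentials into the edge vanish appropriately. I would translate this into the vanishing of the connecting maps, i.e.\ injectivity of $\Tor^R(k,k)\to\Tor^S(k,k)$ (the edge map). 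Injectivity of $\Tor^R(\n,k)\to\Tor^S(\n,k)$ then handles the remaining degrees, because via the exact sequences above it controls the products $\Tor^R_{\ge1}(S,k)\cdot\Tor^S(k,k)$. I would conclude by a series comparison, showing that both injectivities together force
\[
P^S_k(t)\bigl(1-t(P^R_S(t)-1)\bigr)=P^R_k(t).
\]
Conversely, equality forces the vanishing of the relevant connecting maps by counting dimensions degree by degree.
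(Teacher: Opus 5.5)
The paper gives no proof of this theorem; it quotes it from Levin. Your argument therefore has to stand on its own, and as written it has two genuine gaps.

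\textbf{First gap: (1)$\Rightarrow$(2).} Golod's complex $F=A\otimes_k T(s\,\I\Ho(A))$ is a \emph{minimal} resolution only if two things hold. First, $\partial A\subseteq\n A$, so $R\langle X\rangle$ must be the acyclic closure. Second, every value $\mu(h_{\lambda_1},\ldots,h_{\lambda_p})$ must lie in $\n A$. Allowing ``decomposables of positive degree'' does not help, since any coefficient outside $\n A$ in the differential of $F$ breaks minimality.

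Nor can you arrange $\mu(h_\lambda)\in\n A$ from (1) as the paper defines it. Take $R=k[[x]]$ and $S=k$. Then $A=k\oplus kX$ has zero differential, and $\mu(h)=X$, $\mu(h,\ldots,h)=0$ is a trivial Massey operation with $\mu(h)\in\I\Zi(A)$. Yet $P^S_k(t)=1\neq(1+t)/(1-t^2)$, and $\Tor^R(k,k)\to\Tor^S(k,k)$ is not injective. So (1) must be read with $\mu$ taking values in $\n A$; this is the hypothesis under which Golod's construction works, and the paper checks it by hand in the proof of Lemma \ref{sbr4.5}.

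With that reading, (2)$\Rightarrow$(1) must produce such a $\mu$. Representatives of all $h_\lambda$ in $\n A$ exist exactly when $\Tor^R_{\ge1}(S,k)\to\Tor^R(k,k)$ vanishes. Every later obstruction must then be the boundary of an element of $\n A$, not merely of $A$. Your obstruction/bar-construction sketch, which you leave open anyway, never addresses this. Relating collapse of a spectral sequence to a coherent $\mu$ with values in $\n A$ is precisely the unproved content.

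\textbf{Second gap: (2)$\Leftrightarrow$(3).} Your description of the spectral sequence is reversed. In $E^2_{p,q}=\Tor^S_p(M,k)\otimes_k\Tor^R_q(S,k)\Rightarrow\Tor^R_{p+q}(M,k)$ the bottom row receives no differentials. Equality in Serre's bound for $M$ means two things: the edge map $\Tor^R(M,k)\to\Tor^S(M,k)$ is injective, \emph{and} the differentials leaving the bottom row kill every interior term, with no differentials between interior terms. Vanishing of all differentials is the opposite, ``large'', extreme. Likewise, Golodness makes the connecting maps $\Tor^R_{i+1}(k,k)\to\Tor^R_i(\n,k)$ injective, not zero.

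The recursive count does give (2)$\Rightarrow$(3), once you check that $\n$ also attains Serre's bound. This follows from $P^R_{\n}=(P^R_k-1)/t+P^R_S-1$ and $P^S_{\n}=(P^S_k-1)/t$.

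For (3)$\Rightarrow$(2), however, counting only shows that the interior differentials for $M=k$ and $M=\n$ agree after a shift; it does not make them vanish. The missing idea is to read the second injectivity as
\[
\Ho(\n A)=\Tor^R(\n,k)\hookrightarrow\Tor^S(\n,k)=\Ho(\n\mathcal F),
\]
where $\mathcal F\supseteq A$ is a semi-free extension resolving $k$ over $S$. The first injectivity lets you take $\mu(h_\lambda)\in\n A$. Now suppose $\mu$ has values in $\n A$ on tuples of length less than $p$. Write $\mu_{i,j}=\mu(h_{\lambda_{i+1}},\ldots,h_{\lambda_j})$, with $\bar a=(-1)^{i+1}a$ for $a$ of degree $i$. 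By acyclicity of $\mathcal F$ you can choose $u_j\in\mathcal F$ such that
\[
\partial u_j=\mu_{0,j}+\sum_{i=1}^{j-1}\overline{u_i}\,\mu_{i,j}\quad(1\le j<p),\qquad
\sum_{j=1}^{p-1}\overline{\mu_{0,j}}\,\mu_{j,p}=-\partial\Bigl(\sum_{j=1}^{p-1}\overline{u_j}\,\mu_{j,p}\Bigr)\in\partial(\n\mathcal F).
\]
The right side of the first equation is a cycle of positive degree by the Massey identities, so $u_j$ exists. The second identity shows the obstruction is a cycle of $\n A$ that bounds in $\n\mathcal F$, hence bounds in $\n A$.

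This gives (3)$\Rightarrow$(1) with values in $\n A$. Golod's construction then gives (2), and the count gives (3) back. The key step is the same injectivity $\Ho(\m U)\to\Ho(\m\mathfrak X)$ that the paper verifies in proving Theorem \ref{intro2}.
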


We recall the following result of Levin recorded in \cite[Proposition 5.18]{av4}.
\begin{theorem}\label{prl9.5}
Let $(R, \m, k)$ be a local ring and $\phi : P \twoheadrightarrow R$ be a surjective Golod homomorphism from a local complete intersection $P$ of embedding dimension $n$ onto $R$.  Then there exists a polynomial $d_R(t) \in \Z[t]$ such that for any finitely generated $R$-module $M$, we have $d_R(t) P^R_M(t) \in \Z[t]$. Moreover, $d_R(t) P^R_k(t) = (1 + t)^n$.
\end{theorem}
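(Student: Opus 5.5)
The statement just before this point is Theorem \ref{prl9.5}, Levin's result. Recorded from \cite{av4}, it says that a Golod homomorphism from a complete intersection gives a common denominator $d_R(t)$. I would prove it as follows.

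\emph{Setting up the two Golod formulas.} Set $c = \edim(P) - \depth(P)$ for the codimension of $P$. Since $P$ is a complete intersection, Tate's theorem gives
\[
P^P_k(t) = \frac{(1+t)^n}{(1-t^2)^c}.
\]
Since $\phi$ is a Golod homomorphism and $\edim(P) = \edim(R) = n$ (a Golod homomorphism can be taken with $\ker\phi \subset \m_P^2$), the definition gives
\[
P^R_k(t) = \frac{P^P_k(t)}{1 - t(P^P_R(t) - 1)}.
\]
Now put
\[
d_R(t) = (1-t^2)^c\bigl(1 - t(P^P_R(t)-1)\bigr).
\]
The first thing to check is that $P^P_R(t)$ is rational with denominator $(1-t^2)^c$, i.e. that $(1-t^2)^c P^P_R(t)$ is a polynomial. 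This holds by Gulliksen's theorem: over a complete intersection $P$ of codimension $c$, $\Ext_P(M,k)$ is a finitely generated graded module over a polynomial ring in $c$ variables of degree $2$. Granting this, $d_R(t)$ is a polynomial, and $d_R(t)P^R_k(t) = (1+t)^n$. This settles the second assertion.

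\emph{Extending to arbitrary $M$.} I would use the standard fact that if $P \to R$ is Golod, then for every $R$-module $M$ some syzygy $N = \syz^R_j(M)$ (taking $j = \edim$ or $\depth$ of $P$ suffices; in Levin's argument $j$ about $\dim P + 1$) is a Golod module over $\phi$. That means
\[
P^R_N(t) = \frac{P^P_N(t)}{1 - t(P^P_R(t)-1)}.
\]
From this,
\[
d_R(t)P^R_N(t) = (1-t^2)^c P^P_N(t),
\]
which is a polynomial, again by Gulliksen's rationality over complete intersections. Finally, $P^R_M(t)$ differs from $t^{-j}P^R_N(t)$ only by a polynomial: the first $j$ Betti numbers are lost and the rest are shifted. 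Hence $d_R(t)P^R_M(t) \in \Z[t]$.

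\emph{Main obstacle.} The hard step is Levin's theorem that high syzygies are Golod modules over a Golod homomorphism. I would prove it by comparing the change-of-rings spectral sequence and showing that the maps $\Tor^P(N,k) \to \Tor^R(N,k)$ become injective for high syzygies. This can be done by building the resolution via a trivial Massey operation on $A = P\langle X\rangle \otimes_P R$, using Theorem \ref{prl7}, and then checking that Massey products with the homology of $N \otimes A$ vanish once $N$ is a sufficiently high syzygy; depth and dimension arguments control $j$. Since this is recorded in \cite{av4}, I would cite that argument rather than redo it.
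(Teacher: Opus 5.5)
This statement is not proved in the paper; it is simply quoted from \cite[Proposition 5.18]{av4}. Your outline is exactly the standard argument behind that citation: Tate's formula plus the Golod equality for $k$, Gulliksen's finite generation of $\Ext_P(-,k)$ over the cohomology operators (which gives the common denominator $(1-t^2)^c$ over $P$), and Levin's theorem that sufficiently high $R$-syzygies are $\phi$-Golod. So it is correct, with the same level of reliance on the literature.

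Two small fixes are needed. First, $P^R_M(t)$ differs from $t^{j}P^R_{N}(t)$ by a polynomial, not from $t^{-j}P^R_{N}(t)$. Second, $j=\depth P$ cannot suffice in general: for Artinian $P$ it would make $M=R$ itself $\phi$-Golod, which forces $P^P_R(t)=1$, i.e.\ $\phi$ an isomorphism. So just quote Levin's bound, or take $j\gg 0$.
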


%\begin{lemma}\label{prl9.5}
%Let $(R, \m)$ be an Artinian Gorenstein local ring of embedding dimension $n$. Let $\soc(R) = (s)$ and $K^R$ denote the Koszul complex of $R$. Then $s K^R_i \subset (0 : \m^2) K^R_{i + 1}$ for $0 \leq i \leq n - 1$.
%\end{lemma}
%
%
We are now equipped to prove the main result.

\subsection{Proof of Theorem \ref{intro2}}
\begin{proof}
Let the maximal ideal of $R$ be $\m$ and  $k = R/ \m$ denote the residue field of $R$. 
We know that $\Ho_1(K^{R}) \cong I/ \n I$. Therefore, the minimal generators of $I$  are in one-one correspondence with the generators of $\Ho_1(K^{R})$. Let the maximal ideal $\n$ of $Q$ be minimally generated by $y_1, \ldots, y_n$. The Koszul complex of $Q$ is $K^Q = Q \langle X_i : \partial(X_i) = y_i, 1 \leq i \leq n \rangle$. Let $f = \sum_{i=1}^n a_iy_i$ and $P = Q/ (f)$.
Note that $K^P = K^Q \otimes_Q P$, $K^{R} = K^Q \otimes_Q {R}$ are Koszul complexes of $P$, ${R}$ respectively. Set $z = \sum_{i=1}^n a_iX_i \in K^Q_1$. Then its residue class $\bar{z}$ is a cycle in $\Zi_1(K^P)$. Let  $V = K^P\langle T : \partial(T) = \bar{z} \rangle$ be the extension of $K^P$ by adjoining a divided powers variable $T$ of degree two to kill the cycle $\bar{z}$.
By \cite[Theorem 7.3.3]{av}, the natural augmentation $V \twoheadrightarrow k$ is the Tate resolution of $k$ over $P$. 

Set $U = V \otimes_R R = K^R\langle T : \partial(T) = \bar{z} \rangle$. Since $f \in I \setminus \n I$, we have $\bar{z} \in \Zi_1(K^{R}) \setminus B_1(K^{R})$. Therefore, we can adjoin variables to $U$ to obtain the acyclic closure $\mathfrak{X}$ of the residue field $k$ over $R$. The augmentation map $\epsilon : \mathfrak{X} \twoheadrightarrow k$ is the Tate resolution of $k$ over $R$.

 By Theorem \ref{prl7}, to show that $P \twoheadrightarrow R$ is a Golod homomorphism, we need to prove that the induced maps $\Tor^P(k, k) \rightarrow \Tor^R(k, k)$ and $\Tor^P(\m, k) \rightarrow \Tor^R(\m, k)$ are injective. Both $V, \mathfrak{X}$ are minimal algebras. Therefore, the first map is $U\otimes_R k \rightarrow \mathfrak{X} \otimes_R k$ which is obviously injective since $\mathfrak{X}$ is a semi-free extension of $U$.  
 The second map is $i_* : \Ho(\m U) \rightarrow \Ho(\m \mathfrak{X})$ which is induced by the 
inclusion $\i : \m U \rightarrow \m \mathfrak{X}$. We prove that $\i_*$ is an injective map.

We have an $R$-linear chain derivation $v : \mathfrak{X} \rightarrow \mathfrak{X}$ of degree $-2$ such that $v(T) = 1$.
Set $\bar{R} = R / \soc(R)$. Note that $\soc(R) \subset \m^2$, so $K^{\bar{R}} = \bar{R} \otimes_R K^R$ is the Koszul complex of $\bar{R}$. Now $K^{\bar{R}}$ can be extended to the acyclic closure $\mathfrak{Y}$ over $\bar{R}$. Let $\j : K^{\bar{R}} \rightarrow \mathfrak{Y}$ denote the inclusion. 
The augmentation $\epsilon_{\mathfrak{Y}} : \mathfrak{Y} \twoheadrightarrow k$ is an algebra homomorphism over $K^R$. The acyclic closure $\mathfrak{X}$ is semi-free over $K^R$. Therefore, the augmentation $\epsilon_{\mathfrak{X}} : \mathfrak{X} \twoheadrightarrow k$ lifts to a DG algebra homomorphism $\beta : \mathfrak{X} \rightarrow \mathfrak{Y}$ over $K^R$ , cf.\cite[Proposition 2.1.9]{av}. Let  $\alpha : K^R \rightarrow K^{\bar{R}}$ denote the quotient map. By abuse of notation we denote restriction of a map by the same symbol.
We have the following commutative diagram.

\[\xymatrix{
\m K^R \ar[d]^{\alpha} \ar[r]^\i &\m \mathfrak {X}\ar[d]^{\beta}\\
\m K^{\bar{R}} \ar[r]^\j          &\m \mathfrak{Y}}\]

A cycle $y$ in $\m U$ can be written as $y = \sum_{k=0}^m a_k T^{(m-k)}$, $a_i \in \m K^R$. Suppose $\i(y)$ is in the boundary of $\m \mathfrak{X}$. We prove by induction on $m$ that $y$ is in the boundary of $\m U$.

First assume that $m = 0$. Then $y \in \m K^R$. Since $\i(y)$ is in the boundary of $\m \mathfrak{X}$, $\j \circ \alpha(y)$ is in the boundary of $\m \mathfrak{Y}$ by the commutative diagram. Now $\bar{R}$ is a Golod ring, so $\j$ induces an injective map $\j_* : \Ho(\m K^{\bar{R}}) \rightarrow \Ho(\m \mathfrak{Y})$. Therefore, $\alpha(y)$ is in the boundary of $\m K^{\bar{R}}$. This implies that $y = sy_1 + \partial(y_2)$ where $y_1 \in K^R$, $y_2 \in \m K^R$  and $\soc(R) = (s)$. 

We know by \cite[Lemma 1.2]{lev1} that 
$\soc(R) K^R_i \subset (0 : \m^2)\Bo_{i}(K^R)$ for $1 \leq i \leq n - 1$. 
If $\deg(y) = \deg(y_1) < n$, then $sy_1 \in (0 : \m^2)\Bo(K^R)$ and consequently $y \in \m \Bo(K^R) \subset \m \Bo(U)$. On the other hand if $\deg(y) = n$, then $y_2 = 0$ and $y = s y_1 = a s X_1\ldots X_n$, $a \in R$. Since $\Ho(K^R)$ is a Poincar\'e algebra \cite{av5}, there is a $z' \in \Zi_{n-1} (K^R)$ such that $\bar{z}z' = s X_1\ldots X_n$. We conclude $y = a\bar{z}z'= \partial (aTz') \in \m \Bo(U)$. Therefore, the induction step for $m = 0$ follows.

Now we assume that $m > 0$. Note that $a_0 = v^m (\i(y))$. Since $\i(y) \in \m \Bo(\mathfrak{X})$ and the chain derivation $v$ commutes with the differential of $\mathfrak{X}$, we have $a_0 \in \m K^R \cap \Bo(\m \mathfrak{X})$. We consider two cases. 

Suppose $\deg(a_0) = n$. Then $a_0 \in \Zi_n(\m K^R) = \soc(R)K^R_n$. Therefore, $a_0 = a s X_1\ldots X_n$, $a \in R$. One observes $a_0T^{(m)} = a\bar{z}z'T^{(m)} = \partial(az' T^{(m + 1)}) \in \m \Bo(U)$. 
Therefore, $ \i(\sum_{k=0}^{m-1} a_k T^{(m-k)}) = \i(y) - a_0T^{(m)}$ is in the boundary of $\m \mathfrak{X}$. Consequently $\sum_{k=0}^{m-1} a_k T^{(m-k)}$ is in the boundary of $\m U$ by the induction hypothesis. We conclude that $y$ is in the boundary of $\m U$. 

Suppose $\deg(a_0) < n$. Then by the argument in the induction step $m = 0$, one has $a_0 = \partial(y_3)$ for $y_3 \in \m K^R$. We can write 
\[y = \partial(y_3) T^{(m)} + \sum_{k=0}^{m-1} a_k T^{(m-k)} = \partial(y_3 T^{(m)}) + [(-1)^{\deg(y_3) + 1} y_3 \bar{z} T^{(m-1)} + \sum_{k=0}^{m-1} a_k T^{(m-k)}].\] 
The first summand is in $\m \Bo(U)$. This implies that the second summand is in the boundary of $\m \mathfrak{X}$ and therefore also in the boundary of $\m U$ by induction hypothesis. We conclude that $y$ is in the boundary of $\m U$. This completes the induction step. Hence $\i_*$ is an injective map and the statement (1) follows.

The ring $\bar{R}$ is Golod.  The Poincar\'e series of $R$ is computed as $\displaystyle P^{R}_k(t) = \frac{(1 + t)^n}{1 - t(P^Q_R(t) - 1) + t^{n+1}(1+t) }$ in \cite[Proposition 6.2]{rossi}, so the statement (2) follows from  Theorem \ref{prl9.5}.
\end{proof}

%%%%%%%%%%%%%%%%%%%%%%%%%%%%%%%%%%%%%%%%%%%%%%%%%%%%%%%%%%%%%%%%% 
\section{Stretched and almost stretched rings}

Our aim in this section is to prove that stretched and almost stretched Gorenstein rings are good. The key step is to show that rings of these type decompose as connected sums.  If the residue field is infinite, then  Lemma \ref{acg3} below follows from \cite[Theorem 1]{eakin}.  The proof of the lemma is suggested by the anonymous referee.

\begin{lemma}\label{acg3}
Let $(R, \m, k)$ be a local ring such that $\mu(\m^2) \leq 2$. Then there exists an $x \in \m \setminus \m^2$ such that $\m^2 = x\m$. Furthermore, if $\lo(R) \geq 3$, then $x^2 \not\in \m^3$. 
\end{lemma}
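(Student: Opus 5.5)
The plan is to reduce to linear algebra on the associated graded pieces, use Nakayama's lemma to lift, and then get the second assertion by one more Nakayama argument. Set $V = \m/\m^2$ and $W = \m^2/\m^3$, so that $\dim_k W \leq \mu(\m^2) \leq 2$. Multiplication induces a symmetric bilinear map $V \times V \to W$ whose image spans $W$. For $x \in \m$ with image $\bar x \in V$, let $\phi_{\bar x} : V \to W$ be $v \mapsto \bar x v$.

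\emph{Reduction.} It suffices to find $x \in \m\setminus\m^2$ with $\phi_{\bar x}$ surjective. In that case $\m^2 = x\m + \m^3 = x\m + \m\cdot\m^2$. Since $\m^2$ is finitely generated, Nakayama's lemma applied to $\m^2/x\m$ gives $\m^2 = x\m$. If $W = 0$, then $\m^2 = \m^3$, so $\m^2 = 0$ by Nakayama, and any $x \in \m \setminus \m^2$ works. If $\m = \m^2$, then $\m = 0$; I will treat this degenerate case separately or regard it as excluded.

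\emph{Case $\dim W = 1$.} Some product $\bar a \bar b$ is nonzero, and then $x = a$ already gives $\phi_{\bar x} \neq 0$, hence surjective onto the line $W$.

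\emph{Case $\dim W = 2$.} This is the main step, and I want an argument that works over any residue field, including finite ones, so no genericity. Consider the linear subspace $\mathcal{L} = \{\phi_{\bar x} : \bar x \in V\} \subseteq \Hom_k(V, W)$, and suppose for contradiction that every element has rank $\leq 1$. I will use the standard fact that a linear space of matrices of rank $\leq 1$ either has all images in a common line or has all kernels containing a common hyperplane. This is proved by taking two rank-one maps $u\otimes\alpha$ and $u'\otimes\alpha'$ and checking that their sum has rank $\leq 1$ only if $u \parallel u'$ or $\alpha \parallel \alpha'$, then extending to the whole space.
\begin{itemize}
\item In the first alternative, all products lie in a line $L \subset W$. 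This contradicts that products span the $2$-dimensional space $W$.
\item In the second alternative, there is a hyperplane $H \subset V$ with $\bar x H = 0$ for all $\bar x$. By symmetry $H\cdot V = 0$, so writing $V = H + k\bar v_0$, the space $W$ is spanned by $\bar v_0^2$ and has dimension $\leq 1$, again a contradiction.
\end{itemize}
Hence some $\phi_{\bar x}$ has rank $2$, i.e. is surjective. Since $\phi_0 = 0$, this $\bar x$ is nonzero, so $x \in \m\setminus\m^2$.

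\emph{Second assertion.} Let $x$ satisfy $\m^2 = x\m$. Then $\m^3 = \m\cdot\m^2 = x\m^2 = x^2\m$. If $x^2 \in \m^3$, this gives $\m^3 = x^2\m \subseteq \m^3\m = \m^4$, so $\m^3 = 0$ by Nakayama. That contradicts $\lo(R) \geq 3$. Thus $x^2 \notin \m^3$ for any such $x$.

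I expect the rank-one subspace dichotomy in the $\dim W = 2$ case to be the only step needing real care; everything else is formal Nakayama.
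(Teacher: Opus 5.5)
Your proof is correct. It differs from the paper's only in how the key case $\dim_k \m^2/\m^3 = 2$ is handled.

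\textbf{The paper's route.} The paper makes the same Nakayama reduction as you (to $\m^3=0$) and then argues by hand with a minimal generating set $x_1,\dots,x_n$. Assuming no $x\m$ equals $\m^2$, each nonzero $x_i\m$ is a line, and $x_ix_j\neq 0$ forces $x_i\m=x_j\m$; hence $x_1x_2=0$. Choosing $x_i,x_j$ with $x_1x_i\neq 0\neq x_2x_j$ then gives $x_ix_j=0$. So $(x_2+x_i)\m$ would be a single line containing both $x_1x_i$ and $x_2x_j$, forcing $x_1\m=x_2\m$, a contradiction.

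\textbf{Your route.} You pass to the symmetric bilinear map $V\times V\to W$ and use the dichotomy for linear spaces of maps of rank at most $1$: either all images lie in a common line, or all kernels contain a common hyperplane. The first alternative fails because products span $W$, and the second fails by symmetry.

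\textbf{Comparison.} The pairwise fact behind your dichotomy is that $u\otimes\alpha+u'\otimes\alpha'$ has rank $2$ when $u,u'$ are independent and $\alpha,\alpha'$ are independent. This is exactly what the paper's $x_2+x_i$ trick exploits. Your three-element extension step works over any field, including $\mathbb{F}_2$, so like the paper you need no assumption on $k$. Your version is more conceptual: it isolates that the only inputs are that products span $W$ and that multiplication is symmetric. The paper's version is fully self-contained and does not rely on the rank-one-space lemma. The second assertion is handled identically in both.

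You are also right to flag the degenerate case $\m=0$, where $\m\setminus\m^2$ is empty and the statement fails vacuously; the paper excludes this case tacitly.
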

\begin{proof}
If $\m^2 = x \m + \m^3, x \in \m \setminus \m^2$, then by Nakayama's lemma we have $m^2 = x \m$. Therefore, to prove the first assertion, it is enough to assume that $\m^3 = 0$, i.e. $\m^2$ is a $k$-vector space. If $\m^2 = 0$, then $\m^2 = x \m = 0$ for all $x \in \m \setminus \m^2$. If $\mu(\m^2) = 1$, then for any $x \in \m \setminus \m^2$ such that $x \m \not= 0$, we have $\m^2 = x \m$. Therefore, we only need to consider the case when $\mu(\m^2) = 2$, i.e. $\m^2$ is a vector space of dimension two.

Let $x_1, \ldots, x_n$ be a minimal generating set of $\m$. Let $r$ be such that that $x_i \m \neq 0$ for all $i$
with $1\leq  i \leq r$ but $x_i \m = 0$ for $i > r$. Assume, by way of contradiction, that $\m^2 \neq x\m$ for all $x \in \m \setminus \m^2$. Thus, if $i \leq r$ then
$x_i\m$ is a one-dimensional vector space. We may also assume $\m^2 = x_1\m + x_2 \m$. Clearly  $x_1 \m \neq x_2 \m$ since otherwise $\m^2 = x_1 \m$, a contradiction to our assumption. 

If $i \leq r$, $j \leq r$ and $x_ix_j \neq 0$, then one observes that $x_i \m = x_j \m$.  This is true because $x_i\m$ and $x_j \m$ are both one-dimensional vector spaces and they share the nonzero element $x_i x_j$.
Since $x_1 \m \neq x_2 \m$, we must have thus $x_1x_2 = 0$. Now $x_1 \m \neq 0$ and $x_2 \m \neq 0$, so there exist $i, j$ with $i \leq r$, $j \leq r$ such that $x_1x_i \neq 0$ and $x_2 x_j \neq 0$. This implies $x_1\m = x_i \m$ and $x_2 \m = x_j \m$. In particular, $x_i \m \neq x_j \m$ and hence $x_ix_j = 0$.
We have $(x_2 + x_i)x_1 = x_1x_i \neq 0$ and $(x_2 + x_i)x_j = x_2x_j \neq 0$, hence $(x_2 + x_i) \m = x_1\m$ and $(x_2 + x_i) \m = x_j \m = x_2\m$. This yields $x_1 \m = x_2\m$, a contradiction. Therefore, the first part of the lemma follows.

If $x^2 \in \m^3$, then $\m^3 = x^2 \m \subset \m^4$. By Nakayama's lemma, $\m^3 = 0$ which implies $\lo(R) \leq 2$. Therefore, $x^2 \not\in \m^3$ if $\lo(R) \geq 3$.
\end{proof}

We recall definitions of fibre products and connected sums, cf. \cite{an3}.

\begin{definition}\label{prl10}
Let $(R, \m_R, k)$, $(S, \m_S, k)$ be local rings with a common residue field $k$. Let $\pi_R : R \rightarrow k$, $\pi_S : S \rightarrow k$ be natural quotient maps from $R$, $S$ onto $k$ respectively. The fibre product of $R$ and $S$ is defined as the ring $R \times_k S = \{(r, s) \in R \times S : \pi_R(r) = \pi_S(s) \}$. The ring $R \times_k S$ is local with maximal ideal $\m_R \oplus \m_S$. 
%The rings $R$, $S$ are modules over $R \times_k S$ by left actions defined by projection maps $(r, s) \mapsto r$, $(r, s) \mapsto s$ respectively. %For any $R$-module $M$ we have natural quotients $
      
Now assume that both $R$, $S$ are Artinian Gorenstein local rings with one dimensional socles $\soc(R) = \langle \delta_R \rangle$, $\soc(S) = \langle \delta_S \rangle$. Then the connected sum of $R$ and $S$ is defined as 
$$R \#  S = \frac{R \times_k S}{ \langle (\delta_R, - \delta_S)\rangle}.$$

We say a Gorenstein local ring $Q$ is decomposable as a connected sum if there are rings $R$, $S$ such that $Q = R \#  S$, $l(R) < l(Q)$ and $l(S) < l(Q)$. Here $l(-)$ denotes the length function. 
\end{definition}

 Define a left module structure on the polynomial ring $T = k[Y_1, \ldots, Y_n]$ over the ring $S = k[X_1, \ldots, X_n], X_i = Y_i^{-1}$ by defining the action of $X_i$ on a monomial $M \in T$ as the usual multiplication if $X_iM \in T$ and zero otherwise. Macaulay's inverse system establishes a one-to-one correspondence between local Artinian Gorenstein algebras $\displaystyle R = \frac{k[X_1, \ldots, X_n]}{I}$ such that $I \subset (X_1, \ldots, X_n)$ and polynomials $F$ in the ring $T$ up to a unit multiple. The correspondence is given by $I = \ann F$, cf.\cite[Theorem 21.6]{ebud}. If Gorenstein local rings $R$, $S$ correspond $F \in k[Y_1, \ldots, Y_m]$, $G \in k[Y_{m + 1}, \ldots, Y_n]$ respectively, then the connected sum $R \# S$ corresponds $F + G \in k[Y_1, \ldots, Y_n]$, cf. \cite[Remark 4.24]{an0}.

%Macaulay's inverse system establishes a one-to-one correspondence between local Artinian Gorenstein algebras $\displaystyle R = \frac{k[X_1, \ldots, X_n]}{I}$ such that $I \subset (X_1, \ldots, X_n)$ and polynomials $F$ in the ring $T = k[Y_1, \ldots, Y_n], Y_i = X_i^{-1}$ upto a unit multiple. The correspondence is given by $I = \ann F$, where the action of $X_i$ on a monomial $u \in k[Y_1, \ldots, Y_n]$ is the usual multiplication if $X_iu \in T$ and zero otherwise, cf.\cite[Theorem 21.6]{ebud}. If Gorenstein local rings $R$, $S$ correspond $F \in k[Y_1, \ldots, Y_m]$, $G \in k[Y_{m + 1}, \ldots, Y_n]$ respectively, then the connected sum $R \# S$ corresponds $F + G \in k[Y_1, \ldots, Y_n]$.
%

The following result follows from \cite[Proposition 4.1]{an1} and can be proved easily for Artinian $k$-algebras using Macaulay's inverse system.
\begin{theorem}\label{acg8}
Let $(Q, \n, k)$ be a regular local ring and $I \subset \n^2$ be an ideal such that $R = Q/ I$ is an Artinian Gorenstein local ring. Let $\n$ be minimally generated by $x_1, \ldots, x_m, y_1, \ldots, y_n$ such that $(x_1, \ldots, x_m)(y_1, \ldots, y_n)  \subset I$. Let $\max\{ i : (x_1, \ldots, x_m)^i \not\subset I\} = s$, $\max\{ i : (y_1, \ldots, y_n)^i \not\subset I\} = t$. Then there are ideals $I_1$, $I_2$ in $Q$ containing $(x_1, \ldots, x_m)$, $(y_1, \ldots, y_n)$ respectively such that the following hold.

\begin{enumerate}
\item
The rings $S = Q/ I_1$, $T = Q/ I_2$ are Gorenstein rings. $\edim(S) = n$, $\edim(T) = m$, $\lo(S) = t$, $\lo(T) = s$.
\item
$R = S \#_k T$.
\end{enumerate}
\end{theorem}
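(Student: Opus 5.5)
The plan is to use Macaulay's inverse system in the case where $Q$ is a power series ring $k[[X_1,\ldots,X_m,Y_1,\ldots,Y_n]]$ over $k$. This is the situation recalled before the statement: $R$ is Artinian, so $\hat R = R$, and if $Q$ contains a field then $Q = k[[\ldots]]$. Here the variables correspond to $x_1,\ldots,x_m,y_1,\ldots,y_n$. For the general (mixed characteristic) case I would simply invoke \cite[Proposition 4.1]{an1}, as indicated before the statement.

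Let $F$ be a dual generator of $R$, so $I = \ann F$. Write $U_i$ for the inverse variables dual to $x_i$ and $V_j$ for those dual to $y_j$, so that $F$ lives in $k[U_1,\ldots,U_m,V_1,\ldots,V_n]$.

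\textbf{Step 1: splitting $F$.} Since $x_iy_j \in I$ for all $i,j$, we have $x_iy_jF = 0$. I claim this forces $F = G + H + c$ with $G \in k[U]$, $H \in k[V]$ and $c \in k$. Indeed, suppose a monomial of $F$ involves both some $U_i$ and some $V_j$. Contraction by $x_iy_j$ sends distinct monomials to distinct monomials (or to zero). So $x_iy_jF$ would contain that monomial with $U_i$ and $V_j$ each lowered by one, with nonzero coefficient, a contradiction. Absorbing the constant $c$ into $G$ (or discarding it; it does not change the annihilator in $\n$), we get $F = G + H$.

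\textbf{Step 2: defining the pieces.} Set $I_1 = \ann_Q H$ and $I_2 = \ann_Q G$. Then $x_i \in I_1$ because $H$ involves no $U$'s, and $y_j \in I_2$ likewise. Both $S = Q/I_1$ and $T = Q/I_2$ are Artinian Gorenstein by the inverse system correspondence. The cited remark \cite[Remark 4.24]{an0} says the dual generator of a connected sum is the sum of the dual generators. Applying it to $S$ (viewed as a quotient of $k[[Y]]$) and $T$ (viewed as a quotient of $k[[X]]$) gives $R = S \#_k T$, provided $G$ and $H$ are nonconstant so that both pieces are honest local rings. This proves (2).

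\textbf{Step 3: the invariants in (1).} For the Loewy lengths, an element of $(y)^i$ kills $F$ if and only if it kills $H$. Hence $(y)^i \not\subset I$ if and only if $(y)^i H \neq 0$, if and only if $i \le \deg H$. Since $\lo(Q/\ann H) = \deg H$, this gives $t = \deg H = \lo(S)$; symmetrically $s = \lo(T)$. In particular $s,t \ge 1$ (indeed $\ge 2$ will follow from the embedding dimension count), so $G,H$ are nonconstant.

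For the embedding dimension, suppose $\edim(S) < n$. Then some linear form $\ell$ in the $y$'s, nonzero modulo $\n^2$, would lie in $I_1 + \n^2$. After adjusting by a quadratic correction, this yields an element of $\ann H$ involving only $y$'s that is linear modulo $\n^2$. That element kills $G$ as well, since it has no constant term in the $x$-direction: elements of $(y)$ kill $G$. So it lies in $I \subset \n^2$, a contradiction. Hence $\edim(S) = n$, and likewise $\edim(T) = m$.

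I expect this last point to be the main obstacle. Elements of $I_1$ need not be homogeneous, and their $x$-parts must be discarded carefully. The cleanest route I would try is to compare dimensions of the degree-one components of $\langle$partials of $H\rangle$. Each $V_j$ must occur among the derivatives of $H$, because otherwise $\n$ would not be minimally generated by the $x_i$ and $y_j$ modulo $I$.
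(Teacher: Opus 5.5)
Your proposal is correct and follows the route the paper itself indicates. The paper gives no written proof: it cites Proposition 4.1 of \cite{an1} and remarks that for Artinian $k$-algebras the statement follows from Macaulay's inverse system, which is what you carry out (splitting $F = G + H$, setting $I_1 = \ann H$ and $I_2 = \ann G$, and invoking the dual-generator description of connected sums), with the same citation for the general case. Your embedding-dimension argument is already complete as written, because the $(X)$-part of an element of $I_1$ lies in $(X) \subseteq \ann H$ and can be discarded, so the alternative via partials of $H$ is unnecessary.
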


%The notion of a Conca generator is introduced in \cite{av7} by Avramov et al. A Conca generator of a local ring $(R, \m)$ is an element $x \in \m \setminus \m^2$ such that $\m^2 = x \m$ and $x^2 = 0$. We consider a weaker version below. 
%
%\begin{definition}{\rm (Almost Conca Generators)}\label{acg4}
%Let $(R, \m)$ be a local ring. An element $x \in \m \setminus \m^2$ is called an almost Conca generator of $\m$ if $\m^2 = x \m$.
%\end{definition}

The next theorem is the key to decompose an Artinian Gorenstein local ring $(R, \m, k)$ with $\mu(\m^2) \leq 2$ as a connected sum. When $\edim(R) = 2$ and $\Ch(k) = 0$, the theorem  follows from \cite[Theorem 4.1]{elias}.

%The following result is motivated by \cite[Theorem 4.1]{elias}.

\begin{theorem}\label{acg5}
Let $(R, \m, k)$ be a local Artinian Gorenstein ring. Let $\edim(R) = n$, $\lo(R) \geq 3$ and $\dim_k \m^2/ \m^3 = m < n$. 
Assume that $\m$ admits a generator $x_1$ such that $\m^2 = x_1 \m$. Then there exists  a minimal generating set $\{x_1, x_2, \ldots, x_n\}$ of $\m$ extending $x_1$ such that the following hold.

\begin{enumerate}
\item
$\m^2 = (x_1^2, x_1x_2, \ldots, x_1x_m)$, 
 
\item
$(x_1, \ldots, x_m)(x_{m +1}, \ldots, x_n) = 0$,

\item
$(x_{m +1}, \ldots, x_n)^2 = \soc(R)$.
\end{enumerate}
The ring $R$ decomposes as connected sum $R = S \# T$ such that $\edim(S) = m$, $\edim(T) = n - m$, $\lo(S) = \lo(R)$ and $\lo(T) = 2$.

\end{theorem}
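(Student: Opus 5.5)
The plan is to build the generating set in three stages: first the $x_i$ with $i \leq m$, then the $x_j$ with $j>m$ from the kernel of multiplication by $x_1$, then a correction of the latter using Gorenstein duality. The decomposition will then come from Theorem \ref{acg8}.

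\emph{Step 1: the generators $x_2,\dots,x_m$ and statement (1).} Multiplication by $x_1$ induces a surjective $k$-linear map $\mu_{x_1}: \m/\m^2 \to \m^2/\m^3$, since $\m^2 = x_1\m$. Because $\lo(R)\geq 3$, the Nakayama argument at the end of Lemma \ref{acg3} gives $x_1^2 \notin \m^3$. Hence we can choose $x_2,\dots,x_m$ so that the classes of $x_1^2, x_1x_2,\dots,x_1x_m$ form a basis of $\m^2/\m^3$. Then $\m^2 = (x_1^2,x_1x_2,\dots,x_1x_m)+\m^3$, and Nakayama gives (1).

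\emph{Step 2: the generators $x_{m+1},\dots,x_n$.} The kernel of $\mu_{x_1}$ has dimension $n-m$. Pick $y_{m+1},\dots,y_n$ whose classes span it and which complete $x_1,\dots,x_m$ to a minimal generating set. Then $x_1y_j \in \m^3 = x_1\m^2$, say $x_1y_j = x_1w_j$ with $w_j\in\m^2$. Replacing $y_j$ by $y_j-w_j$ keeps the set minimal and gives $x_1y_j=0$. The key observation is that any $z\in(0:x_1)$ satisfies
\[
z\m^2 = zx_1\m = 0, \qquad \text{so } z\m \subset (0:\m)=\soc(R)=(s).
\]
Consequently $x_iy_j = c_{ij}s$ and $y_jy_k\in\soc(R)$ for all indices, with $c_{1j}=0$.

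\emph{Step 3: killing the mixed products.} To obtain (2), I will replace $y_j$ by $y_j-u_j$ with $u_j\in\m^2$, chosen so that $x_1u_j=0$ and $x_iu_j=c_{ij}s$ for $2\leq i\leq m$. By Step 2, any such $u_j\in(0:x_1)$ automatically has $u_j\m\subset\soc(R)$. So the requirement is that the linear functional $x_i\mapsto c_{ij}$ on $\langle x_2,\dots,x_m\rangle$ be realized by an element of $(0:x_1)\cap\m^2$. I would take $u_j$ inside $\m^{\lo(R)-1}\subseteq\m^2$, which is where $\lo(R)\geq3$ is used. Since $\m^{\lo(R)-1}=x_1^{\lo(R)-3}\m^2$ is spanned by the elements $x_1^{\lo(R)-2}x_i$, $i\le m$, modulo $\m^{\lo(R)}$, the task reduces to a nondegeneracy statement for the pairing
\[
\m^{\lo(R)-1}\times\langle x_1,\dots,x_m\rangle\to\m^{\lo(R)}=\soc(R).
\]
I would prove this nondegeneracy from Gorenstein duality together with (1): an element of $\langle x_1,\dots,x_m\rangle\setminus\m^2$ annihilating $\m^{\lo(R)-1}$ would have to be killed by the relevant power of $x_1$, which contradicts the injectivity of $\mu_{x_1}$ on $\langle x_1,\dots,x_m\rangle$. \textbf{This duality step is the main obstacle}, and I would first check it carefully in the graded case via Macaulay's inverse system. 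After the correction, the $x_j=y_j-u_j$ for $j>m$ still lie in $(0:x_1)$, so the products $x_jx_k$ remain in $\soc(R)$ for $j,k>m$.

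\emph{Step 4: statement (3) and the decomposition.} Now $(x_{m+1},\dots,x_n)^2\subseteq\soc(R)$. Each $x_j$ with $j>m$ is not in $\soc(R)\subset\m^2$, so $x_j\m\neq0$. Since $x_j$ kills $x_1,\dots,x_m$ by (2), some product $x_jx_k$ with $k>m$ must be nonzero. Hence equality holds in (3), because $\soc(R)$ is one-dimensional. Finally, apply Theorem \ref{acg8} to a lift of this generating set to the Cohen presentation, using (2) for the hypothesis $(x_1,\dots,x_m)(x_{m+1},\dots,x_n)\subset I$. The Loewy lengths follow directly: $(x_1,\dots,x_m)^i\supseteq(x_1^i)\not\subset I$ for $i=\lo(R)$ gives $\lo(S)=\lo(R)$, and (3) gives $\lo(T)=2$. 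The embedding dimensions $m$ and $n-m$ are read off from Theorem \ref{acg8}(1).
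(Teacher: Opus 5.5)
\textbf{There is a genuine gap in Step 3.} Your Steps 1, 2 and 4, and the final appeal to Theorem \ref{acg8}, are sound and match the paper. The problem is that you look for the correction $u_j$ inside $\m^{\lo(R)-1}$, and the nondegeneracy you need there is false in general.

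Every $u\in\m^{\lo(R)-1}=x_1\m^{\lo(R)-2}$ kills the $y_j$. Moreover, $u\m=0$ forces $u\in\soc(R)=\m^{\lo(R)}$. So the pairing embeds $\m^{\lo(R)-1}/\m^{\lo(R)}$ into $\langle\bar x_1,\dots,\bar x_m\rangle^{*}$. It can realize every functional you need only if $h_{\lo(R)-1}=m$, where $h_i=\dim_k\m^i/\m^{i+1}$.

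The relation $\m^{i+1}=x_1\m^i$ only makes the Hilbert function non-increasing from degree $1$ on. It can drop strictly before degree $\lo(R)-1$. Your heuristic conflates injectivity of $x_1\colon\m/\m^2\to\m^2/\m^3$ with injectivity of multiplication by $x_1^{\lo(R)-2}$, and the latter fails once the Hilbert function drops. Checking in the graded case would not help either, because there symmetry gives $n=h_1=h_{\lo(R)-1}\le h_2=m$, so no graded examples exist.

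\textbf{A counterexample.} Let $R=k[[X_1,X_2,X_3]]/\ann(F)$ with $F=Y_1^4+Y_2^3+Y_3^2$ (Macaulay inverse system).
\begin{enumerate}
\item In $R$ we have $X_iX_j=0$ for $i\neq j$, and $X_3^2=X_2^3=X_1^4$ spans $\soc(R)$.
\item The Hilbert function is $(1,3,2,1,1)$, and $\m^2=x_1\m$ for $x_1=X_1+X_2+X_3$.
\item Your recipe with the admissible choice $x_2=X_2+X_3$ gives $y_3=X_3-X_1^3$, with $x_1y_3=0$ and $x_2y_3=X_1^4\neq0$.
\item Every $u\in\m^3=kX_1^3+kX_1^4$ with $x_1u=0$ lies in $kX_1^4=\soc(R)$, so $x_2u=0$. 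Hence no correction in $\m^{\lo(R)-1}$ exists.
\end{enumerate}
The correction that does work is $u_3=X_2^2-X_1^3\in\m^2\setminus\m^3$, which gives $x_3=X_3-X_2^2$.

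\textbf{How to fix it.} Search in all of $(0:x_1)\cap\m^2$; the missing input is a length count there.
\begin{enumerate}
\item Since $\m^2=x_1\m\subseteq x_1R$, we have $l(R/x_1R)=n$, so Matlis duality gives $l(0:x_1)=n$.
\item By your Step 2, the image of $(0:x_1)$ in $\m/\m^2$ is exactly $\ker\mu_{x_1}$, which has dimension $n-m$. Hence $l((0:x_1)\cap\m^2)=m$.
\item If $u\in(0:x_1)\cap\m^2$ kills $x_2,\dots,x_m$, then it also kills $x_1$ and every $y_j$ (write $u=x_1v$ with $v\in\m$). So $u\in\soc(R)$.
\item Therefore $[(0:x_1)\cap\m^2]/\soc(R)$, of dimension $m-1$, embeds into $\Hom_k(\langle\bar x_2,\dots,\bar x_m\rangle,\soc(R))$, which also has dimension $m-1$.
\item So every functional $x_i\mapsto c_{ij}$ is realized by some $u_j\in(0:x_1)\cap\m^2$.
\end{enumerate}
This is essentially the paper's Claims 2 and 3. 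With it in place of your $\m^{\lo(R)-1}$ argument, the rest of your proof goes through.
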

\begin{proof}

Since $\lo(R) \geq 3$, we have $x_1^2 \not\in \m^3$. Therefore, we can choose a minimal generating set $\{x_1, x_2, \ldots, x_n\}$ of $\m$ such that $\m^2 = (x_1^2, x_1x_2, \ldots, x_1x_m)$. The statement (1) follows.

We have $x_1x_j = \alpha_{1j}x_1^2 + \alpha_{2j} x_1x_2 + \ldots + \alpha_{mj}x_1x_m$, $\alpha_{ij} \in R$ for $1 \leq i \leq m$ and $m +1 \leq j \leq n$. This gives $x_1(x_j - \alpha_{1j}x_1 - \alpha_{2j}x_2 - \ldots - \alpha_{mj}x_m) = 0$. Replacing $x_j - \alpha_{1j}x_1 - \alpha_{2j}x_2 - \ldots - \alpha_{mj}x_m$ by $x_j$, we assume that $x_1x_j = 0$ for $m + 1 \leq j \leq n$. 

If $m = 1$, the property (2) is satisfied. We assume that $\mu(\m^2) = m \geq 2$.
Since $\m^2$ is minimally generated by $\{x_1^2, x_1x_2, \ldots, x_1x_m\}$ and $x_1(x_{m+1}, \ldots, x_n) = 0$, we have 
\[(0 : x_1) \subset \m (x_1, \ldots, x_m) + (x_{m+1}, \ldots, x_n).\] 
This implies that residue classes of elements $x_{m + 1}, \ldots, x_n$ form a $k$-basis of $\displaystyle \frac{(0 : _R x_1) + \m^2}{\m^2}$. Therefore, $\displaystyle \dim_k \frac{(0 : _R x_1) + \m^2}{\m^2} = n - m$.

\noindent
{\bf Claim - 1} : $\displaystyle \m[(0 : _R x_1) \cap \m^2] = \soc(R)$.

Note that $\m^2(0 : _R x_1) = x_1\m (0 : _R x_1) = 0$. This implies that $\m[(0 : _R x_1) \cap \m^2] \subset \soc(R)$. Therefore, it is enough to prove that $\m[(0 : _R x_1) \cap \m^2] \not= 0$. We have $\m^{i +1} = x_1\m^i$ for $i \geq 1$. This means that $\mu(\m^{i +1}) \leq \mu(\m^i), i \geq 1$. Let $t = \mx\{i : \mu(\m^i) = m\}$. Then $t \geq 2$. Since $m \geq 2$, we have $\lo(R) \geq t + 1$. The map $\displaystyle \m^t/ \m^{t + 1} \xrightarrow{\cdot x_1} \m^{t + 1}/ \m^{t + 2}$ is not injective since $\dim_k(\m^t/ \m^{t + 1}) >  \dim_k(\m^{t +1}/ \m^{t + 2})$. Therefore, we find  $y \in \m^t \setminus \m^{t +1}$ such that $yx_1 \in \m^{t + 2}$. Note that $\m^{t+2} = x_1 \m^{t +1}$. It follows that $yx_1 = x_1m$ for some $m \in \m^{t +1}$. Consequently $x_1(y - m) = 0$.  Clearly, $y - m  \in [(0 : _R x_1) \cap \m^2]$ and $y - m \not\in \m^{t +1}$. Since $\soc(R) \subset \m^{t + 1}$, we have $y - m \not\in \soc(R)$. Therefore, $[(0 : _R x_1) \cap \m^2] \not \subset \soc(R)$. We conclude that $\m[(0 : _R x_1) \cap \m^2] \not=0$ and the claim is proved.

\noindent
{\bf Claim - 2} : $ \displaystyle \dim_k \frac{[(0 : _R x_1) \cap \m^2]}{\m[(0 : _R x_1) \cap \m^2]} = m - 1$.

We know that $(0 : _R x_1) = \Hom_R(R/ (x_1), R)$. We have $\m^2 = x_1\m \subset (x_1)$. By Matlis duality $\displaystyle l(0 : _R x_1) = l(R/ x_1R) = l(R/ \m^2) - l(\frac{x_1R + \m^2}{\m^2}) = 1 + n - 1 = n$. Now we have
\[
l[(0 : _R x_1) \cap \m^2]  =  l(0 :_R x_1) + l(\m^2) - l[(0 : _R x_1) + \m^2]
 =  l(0 :_R x_1) - l [\frac{(0 : _R x_1) + \m^2}{\m^2}]
 =  n - (n - m) = m.
\]
Therefore, $ \displaystyle \dim_k \frac{[(0 : _R x_1) \cap \m^2]}{\m[(0 : _R x_1) \cap \m^2]} = l(\frac{[(0 : _R x_1) \cap \m^2]}{\m[(0 : _R x_1) \cap \m^2]}) = l[(0 : _R x_1) \cap \m^2] - 1 = m - 1$. 

\noindent
{\bf Claim - 3} : The pairing $\displaystyle \frac{(x_2, \ldots, x_m)}{\m(x_2, \ldots, x_m)} \times \frac{[(0 : _R x_1) \cap \m^2]}{\m[(0 : _R x_1) \cap \m^2]} \rightarrow \soc(R)$ given by $(\bar{x}, \bar{y}) \rightarrow xy$ is well defined and non-degenerate.

We have $\m (x_2, \ldots, x_m)(0 : _R x_1) = 0$ since $\m^2 = x_1\m$. This implies that ${(x_2, \ldots, x_m)[(0 : _R x_1) \cap \m^2]}  \\ \subset \soc(R)$. Therefore, the above pairing exists. Note that $ \m^2( x_{m + 1}, \ldots, x_n) = 0$. As a result if $y \in [(0 : _R x_1) \cap \m^2]$ and $y(x_2, \ldots, x_m) = 0$, we have $y \in \soc(R) = \m[(0 : _R x_1) \cap \m^2]$. 
This implies that the map  
\[\frac{[(0 : _R x_1) \cap \m^2]}{\m[(0 : _R x_1) \cap \m^2]} \rightarrow \Hom\Big(\frac{(x_2, \ldots, x_m)}{\m(x_2, \ldots, x_m)}, \soc(R)\Big)\]
induced by the above pairing is injective.
We have $\displaystyle\dim_k \frac{(x_2, \ldots, x_m)}{\m(x_2, \ldots, x_m)} = \dim_k \frac{[(0 : _R x_1) \cap \m^2]}{\m[(0 : _R x_1) \cap \m^2]} = m - 1$. Therefore, the above map is an isomorphism and consequently the pairing is non-degenerate. 

Note that $\m(x_2, \ldots, x_m)(x_{m +1}, \ldots, x_n) = 0$ so $(x_2, \ldots, x_m)(x_{m +1}, \ldots, x_n) \subset \soc(R)$. The multiplication by $x_j$ defines a map $\displaystyle \frac{(x_2, \ldots, x_m)}{\m(x_2, \ldots, x_m)} \rightarrow \soc(R)$ for $m+1 \leq j \leq n$. Since the pairing in Claim - 3 is non-degenerate, we have $\hat{x}_j \in [(0 : _R x_1) \cap \m^2]$ such that $x_jx_i = \hat{x}_jx_i$ for $2 \leq i \leq m$ and $m + 1\leq j \leq n$. We also have $x_1x_j = x_1\hat{x}_j = 0$ for $m+1 \leq j \leq n$.
It follows that $(x_j - \hat{x}_j)(x_1, \ldots, x_m) = 0$, $m + 1\leq j \leq n$. Therefore, replacing $(x_j - \hat{x}_j)$ by $x_j$ we have $(x_1, \ldots, x_m)(x_{m +1}, \ldots, x_n) = 0$ and the property (2) is satisfied.

Let $\soc(R) = \langle\delta\rangle$ and $K = (x_{m +1}, \ldots, x_n)$. We have $\m^2K = x_1K\m = 0$, so  $K^2 \subset \m K \subset \soc(R)$. Note that $K^2 \not= 0$ for otherwise each of $x_j, m + 1 \leq j \leq n$ is in $\soc(R) \subset \m^2$, a contradiction. Therefore, $K^2 = \m K = \soc(R)$ and the property (3) is satisfied. 

The last statement follows from Theorem \ref{acg8}.
\end{proof}

The following is a consequence of Lemma \ref{acg3}, Theorem \ref{acg5}.
\begin{corollary}\label{ascs}
Let $(R, \m)$ be an Artinian Gorenstein local ring such that $\lo(R) \geq 3$ and $\mu(\m^2) \leq \min \{2, \edim(R) -1\}$. Then $R = S \# T$ where $(S, \p)$, $(T, \q)$ are Gorenstein local rings, $\edim(S) = \mu(\m^2)$,  $\lo(S) = \lo(R)$ and $\lo(T) = 2$.
\end{corollary}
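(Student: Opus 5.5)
The plan is to derive the corollary directly from Lemma \ref{acg3} and Theorem \ref{acg5}; the only work is checking the numerical hypotheses of the theorem. Set $n = \edim(R)$.

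First, since $\mu(\m^2) \leq 2$, Lemma \ref{acg3} gives an element $x_1 \in \m \setminus \m^2$ with $\m^2 = x_1\m$. Because $\lo(R) \geq 3$, the same lemma also gives $x_1^2 \notin \m^3$. In particular $\m^2 \neq 0$.

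Next I would identify the integer $m$ of Theorem \ref{acg5}. By Nakayama's lemma,
\[
m = \dim_k \m^2/\m^3 = \mu(\m^2),
\]
and the hypothesis $\mu(\m^2) \leq \edim(R) - 1$ gives $m < n$. Also $m \geq 1$, since $\m^2 \neq 0$. So every hypothesis of Theorem \ref{acg5} holds: $R$ is Artinian Gorenstein, $\lo(R) \geq 3$, $m < n$, and $x_1$ is a minimal generator of $\m$ with $\m^2 = x_1\m$.

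Theorem \ref{acg5} then yields $R = S \# T$ with $\edim(S) = m = \mu(\m^2)$, $\edim(T) = n - m$, $\lo(S) = \lo(R)$ and $\lo(T) = 2$. These are exactly the conclusions of the corollary.

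Two small checks remain. The first is that $m$ is the same in both places: $\dim_k \m^2/\m^3$ equals $\mu(\m^2)$ by Nakayama. The second is that $S$ and $T$ are genuinely Gorenstein local rings, which is part of the output of Theorem \ref{acg8} invoked at the end of the proof of Theorem \ref{acg5}.

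One might also want the decomposition to be proper in the sense of Definition \ref{prl10}, namely $l(S), l(T) < l(R)$. For the connected sum $R = S \# T$,
\[
l(R) = l(S) + l(T) - 2.
\]
Both $\lo(S) = \lo(R) \geq 3$ and $\lo(T) = 2$ are nonzero, so $l(S) \geq 3$ and $l(T) \geq 3$. Hence $l(S) < l(R)$ and $l(T) < l(R)$.

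I expect no real obstacle; the argument is bookkeeping on top of the two cited results.
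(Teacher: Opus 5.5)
Your proposal is correct and follows the paper's approach: the corollary is derived directly from Lemma~\ref{acg3}, which produces $x_1$ with $\m^2 = x_1\m$, and Theorem~\ref{acg5}, applied with $m = \dim_k \m^2/\m^3 = \mu(\m^2) < \edim(R)$. Your extra check that $l(S), l(T) < l(R)$, using $l(S \# T) = l(S) + l(T) - 2$, is correct but is not needed for the statement as written.
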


\begin{lemma}\label{ds3}
Let $(R, \m)$ be an Artinian Gorenstein local ring such that $\edim(R) \geq 2$ and $\lo(R) = s$. Then the quotient ring $R/ \m^i$ is not a Gorenstein ring for $2 \leq i \leq s$.
\end{lemma}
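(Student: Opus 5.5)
The idea is to compute the socle of $A = R/\m^i$ for $2 \leq i \leq s$ and show it has dimension at least $2$. Since $A$ is Artinian local, $A$ is Gorenstein if and only if $\dim_k \soc(A) = 1$, so this suffices. Write $\bar{\m} = \m/\m^i$ for the maximal ideal of $A$.

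First I observe that $\soc(A) \supseteq \m^{i-1}/\m^i$, because $\m \cdot \m^{i-1} = \m^i$. So if $\dim_k \m^{i-1}/\m^i \geq 2$ we are done. This disposes of $i = 2$ at once, since $\dim_k \m/\m^2 = \edim(R) \geq 2$.

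For general $i$ the Hilbert function may drop to $1$, so I would use Gorenstein duality on $R$ instead. The socle of $A$ is
\[
\soc(A) = \frac{(\m^i :_R \m)}{\m^i}.
\]
By Matlis duality on the Artinian Gorenstein ring $R$, we have $\ell(0:_R J) = \ell(R/J)$ and $0 :_R (0 :_R J) = J$ for every ideal $J$. Apply this with $J = \m^i$ and $L = 0:_R \m^i$, so that $(\m^i : \m) = 0 : (\m L)$. Then
\[
\ell\big((\m^i:\m)/\m^i\big) = \ell(R/\m L) - \ell(R/L) = \ell(L/\m L) = \mu(L).
\]
Thus $\soc(R/\m^i)$ has dimension $\mu(0:_R \m^i)$. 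It remains to show that $0 :_R \m^i$ is not principal when $2 \leq i \leq s$.

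Suppose instead $0 :_R \m^i = (u)$. Since $i \leq s$ we have $\m^i \neq 0$, so $(u) \neq R$ and $u \in \m$. Dualizing, $\m^i = 0 : u$, and hence $R/\m^i \cong R/(0:u) \cong uR$. The module $uR$ embeds in $R$, so its socle is contained in $\soc(R)$, which is one-dimensional. Therefore $R/\m^i$ has a one-dimensional socle and is itself Gorenstein. This is circular, so I need a different contradiction.

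To get one, use $u \in \m$ and $i \geq 2$. From $\m^i = 0:u$ we get $\m^{i-1} u \neq 0$, since otherwise $\m^{i-1} \subseteq 0:u = \m^i$, and Nakayama would force $\m^{i-1} = 0$, contradicting $i \leq s$. On the other hand $u\,\m^i = 0$, so $u\,\m^{i-1} \subseteq \soc(R)$, and hence $u\,\m^{i-1} = \soc(R)$. I would then compare lengths. We have $\ell(uR) = \ell(R/\m^i)$, and also $\ell(uR) = \ell(R) - \ell(0:u) = \ell(R) - \ell(\m^i)$, which is the same quantity, so this gives no contradiction.

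The real approach is therefore to use $\edim \geq 2$ via the quotient by $u$. Since $u \in \m$, the ring $R/uR$ satisfies $\ell(R/uR) = \ell(0:u) = \ell(\m^i)$. I would show $\edim(R/\m^i) = \edim(R) \geq 2$, which holds because $i \geq 2$. A cyclic module $uR \cong R/\m^i$ with $u \in \m$ would give an injection $R/\m^i \hookrightarrow \m$ sending $1 \mapsto u$. Then $\bar{\m} \cong u\m \subseteq \m^2$, which forces $\mu(u\m) = \edim(R) \geq 2$. I would then compare the top degrees under this embedding via the filtration $u\m^j \subseteq \m^{j+1}$, aiming to show that $R/\m^i \cong uR$ forces $\m^{i-1}/\m^i$ to embed into a degree where multiplication by $\m$ is nonzero, which is incompatible with $\edim \geq 2$. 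I expect this last step to be the main obstacle. If it fails, I would fall back on the inverse system: writing $R$ as $\ann F$, the ring $R/\m^i$ is Gorenstein only if $\m^i = \ann G$ for a single $G$. But the inverse system of $\m^i$ is the space of all polynomials of degree less than $i$, which needs at least $\edim \geq 2$ generators of top degree $i-1$, so it is not cyclic.
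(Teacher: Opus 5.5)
Your reduction is correct: $\dim_k \soc(R/\m^i)=\mu(0:_R\m^i)$ via Matlis duality. This matches the paper's first step, which gets cyclicity of $0:_R\m^i\cong\Hom_R(R/\m^i,R)\cong E_{R/\m^i}(k)$ from the Gorenstein assumption. Your direct argument also settles every $i$ with $\dim_k\m^{i-1}/\m^i\ge 2$, in particular $i=2$.

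The substance of the lemma, however, is the case $\dim_k\m^{i-1}/\m^i=1$. This really occurs for non-graded rings (e.g.\ stretched ones), and there you never show that $0:_R\m^i$ is not principal. Your length comparison is tautological, as you noticed, and the ``top degree'' step is left open.

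The inverse-system fallback is false as stated. For $R=S/\ann F$ with $\n$ the maximal ideal of $S$, the inverse system of $\m^i$ is $(S\circ F)\cap T_{<i}$. This equals the space $T_{<i}$ of all polynomials of degree $<i$ only when $\ann F\subseteq\n^i$, and it need not have two generators in degree $i-1$. For instance, take $R=k[[x,y]]/(xy,\,y^2-x^3)$ with dual generator $F=X^3+Y^2$. It is Gorenstein with $s=3$ and Hilbert function $(1,2,1,1)$. For $i=3$, $(S\circ F)\cap T_{\le 2}$ is the span of $X^2,Y,X,1$, minimally generated by $X^2$ and $Y$. So only one generator lies in degree $2$; correspondingly $\soc(R/\m^3)$ is spanned by $x^2$ and $y\notin\m^2$. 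Inverse systems also require $R$ to contain a field, which the lemma does not assume.

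The missing idea is to identify which principal ideal $0:_R\m^i$ would have to be. Suppose $0:_R\m^i=(u)$. Then $\m^{s-i+1}\subseteq(u)$, but $\m^{s-i+1}\not\subseteq\m u$: indeed $\m u\subseteq 0:_R\m^{i-1}$, while $\m^{s-i+1}\m^{i-1}=\m^s\neq 0$. Since $\m u$ is the unique maximal submodule of $(u)$, this forces $(u)=\m^{s-i+1}$.

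So $\m^{s-i+1}$ is principal. Macaulay's bound on the Hilbert function of $\gr_\m(R)$, with $1^{\langle j\rangle}=1$, then makes $\m^j$ principal for every $j\ge s-i+1$. The paper finishes by counting lengths: $\ell(R/\m^i)=\ell(0:_R\m^i)=\ell(\m^{s-i+1})=i$. This forces $\dim_k\m^j/\m^{j+1}=1$ for all $j<i$, hence $\edim(R)=1$, a contradiction.

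Alternatively, your own route closes immediately once $(u)=\m^{s-i+1}$ is known. The isomorphism $\m/\m^i\cong u\m=\m^{s-i+2}$ would require $\edim(R)\ge 2$ generators, yet $\m^{s-i+2}$ is principal.
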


\begin{proof}
If possible assume that $R/ \m^i$ is a Gorenstein ring for some $i$ satisfying $2 \leq i \leq s$. Then the injective hull of $k$ over $R/ \m^i$ is $E_{R/ \m^i}(k) \cong R/ \m^i$. We know that $E_{R/ \m^i}(k) \cong \Hom_R(R/ \m^i, R) = (0 :_R \m^i)$. Consequently  $(0 :_R \m^i) = (x)$, a principal ideal for some $x \in R$. Now $\m^{s - i +1} \subset (0 :_R \m^i)$ and $\m^{s - i +1} \not \subset \m (0 :_R \m^i)$ for otherwise $\m^{s - i +1} \subset \m (0 :_R \m^i) \subset (0 :_R \m^{i-1})$ which implies that $\m^s = \m^{s-i+1} \m^{i-1} = 0$, a contradiction. Since $(0 :_R \m^i)$ is principal, we have $(0 :_R \m^i) = \m^{s-i+1} = (x)$.

 Apply  Macaulay's theorem characterising  Hilbert function \cite[Theorem 4.2.10]{bruns} to the associated graded ring $\gr_{\m}(R)$. We obtain $\mu(\m^{n+1}) \leq \mu(\m^n)^{\langle n \rangle}$ for all $n \geq 1$. We already have $\mu(\m^{s-i+1}) = 1$.
This implies that $\m^j$ is a principal ideal for $j$ satisfying the inequality $s-i+1 \leq j \leq s$ and so $l(0 :_R \m^i) = l( \m^{s-i+1}) = i$. By Matlis duality, $l(R/ \m^i) = l \Hom(R/ \m^i, R) = l(0 :_R \m^i) = i$. We have  $\displaystyle \sum_{j = 0}^{i-1}[l(\m^j/ \m^{j+1}) - 1] = l(R/ \m^i) - i = 0$ and each summand is non-negative. This shows that $\edim(R) = l(\m/\m^2) = 1$, a contradiction. %Therefore, the lemma follows.
\end{proof}

The following theorem is proved by Dress and Kr\"amer in \cite[Satz 2]{dress}. 
\begin{theorem}\label{prl10.6}
Let $(S, \m_S, k)$, $(T, \m_T, k)$ be two local rings and $R = S \times_k T$. Then 
$$\frac{1}{P^R_k(t)} = \frac{1}{P^S_k(t)}  + \frac{1}{P^T_k(t)} - 1.$$ If $M$ is an $S$-module, then 
\[\frac{1}{P^R_M(t)} = \frac{P^S_k(t)}{P^S_M(t)}\Big(\frac{1}{P^S_k(t)}  + \frac{1}{P^T_k(t)} - 1\Big) = \frac{P^S_k(t)}{P^S_M(t)P^R_k(t)}.\]
\end{theorem}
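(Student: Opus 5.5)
The plan is to compare minimal resolutions over $S$ with resolutions over $R$, exploiting the fact that over a fibre product, $S$-modules behave as if $\m_T$ were split off freely. This follows Dress and Kr\"amer's approach.

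\emph{Step 1: the formula for $k$.} We use the standard description of $\Tor^R(k,k)$ for $R = S\times_k T$. Since $\m_R = \m_S \oplus \m_T$ as an $R$-module, the Ext-algebra (equivalently, the homotopy Lie algebra) of $R$ is the free product (coproduct) of those of $S$ and $T$. This gives $1/P^R_k = 1/P^S_k + 1/P^T_k - 1$.

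Concretely, we first compute $P^R_{\m_S}$ and $P^R_{\m_T}$. The $R$-modules $S$ and $T$ satisfy $\Omega_R(S) = \m_T$ and $\Omega_R(T)=\m_S$, since $0\to \m_T\to R\to S\to 0$ is exact. We then use the key fact, which is the main obstacle, that for an $S$-module $N$ with $\m_T N = 0$ one has
$$P^R_N = P^S_N\cdot P^R_S,$$
that is, $R\to S$ is a "large"/Golod-type map. We would prove this by building an $R$-resolution of $N$ as a tensor construction: take a minimal $S$-resolution $F$ of $N$, lift each $F_i$ to a free $R$-module, and correct the differential by adding a minimal $R$-resolution of $S$ (namely of $\m_T$) at each step. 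The splitting $\m_R=\m_S\oplus\m_T$ guarantees that no cancellation occurs, so the resulting resolution is minimal. Equivalently, the change-of-rings spectral sequence $\Tor^S(N,\Tor^R(S,k))\Rightarrow\Tor^R(N,k)$ degenerates, because $\Tor^R(S,k)$ has an $S$-action that is trivial in positive degrees and the maps $\Tor^R(\m_S,k)\to\Tor^S(\m_S,k)$ are surjective, using the retraction $S\to R$, $s\mapsto (s,\pi(s))$.

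\emph{Step 2: solve for $P^R_k$.} Applying Step 1 with $N=\m_S$ gives $P^R_{\m_S} = P^S_{\m_S}P^R_S$. From the exact sequence $0\to\m_T\to R\to S\to0$ we get $P^R_S = 1 + tP^R_{\m_T}$, and symmetrically. The decomposition $\m_R=\m_S\oplus\m_T$ gives $P^R_k = 1+t(P^R_{\m_S}+P^R_{\m_T})$. Writing $P^S_{\m_S}=(P^S_k-1)/t$ and solving the resulting linear system in $P^R_S$ and $P^R_T$ then yields $1/P^R_k = 1/P^S_k+1/P^T_k-1$. This is a routine computation.

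\emph{Step 3: an arbitrary $S$-module $M$.} By Step 1, $P^R_M = P^S_M P^R_S$, and also $P^R_k = P^S_k P^R_S$, since $k$ is an $S$-module killed by $\m_T$. Dividing these gives
$$\frac{1}{P^R_M} = \frac{P^S_k}{P^S_M P^R_k},$$
which is the stated formula after substituting Step 2.

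The main difficulty is the minimality, or degeneration, claim in Step 1. For it we would rely on the algebra retraction $S\hookrightarrow R$, which makes $\Tor^R(N,k)\to\Tor^S(N,k)$ split surjective.
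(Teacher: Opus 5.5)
The paper gives no proof of this statement; it quotes Dress and Kr\"amer. Your overall reduction is sound. Suppose $P^R_N=P^S_N\,P^R_S$ holds for every $S$-module $N$, and its analogue holds for $T$. Then the linear system in Step 2 solves to $P^R_S=1/\bigl(P^S_k(1/P^S_k+1/P^T_k-1)\bigr)$ and $P^R_k=P^S_kP^R_S$, and Step 3 follows at once.

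The gap is in how you prove that factorization. The retraction $s\mapsto(s,\pi(s))$ only makes sense if $T$ contains a coefficient field. The theorem is stated for arbitrary local rings with common residue field, and the paper uses it for Artinian rings that need not contain a field (Lemma \ref{ds4}, Corollary \ref{sbr4}). In general no ring map $S\to R$ splits the projection at all. For example, take $S=\mathbb{F}_p[x]/(x^2)$ and $T=\mathbb{Z}/p^2\mathbb{Z}$: then $p\cdot 1_R=(0,p)\neq 0$, while $p\cdot 1_S=0$.

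Even when a retraction exists, surjectivity of $\Tor^R(-,k)\to\Tor^S(-,k)$ does not by itself make the change-of-rings spectral sequence collapse. That step is Herzog's theorem on algebra retracts, or Levin's criterion for large homomorphisms, and you would have to cite it. It does not follow from $\Tor^R(S,k)$ being a $k$-vector space together with surjectivity for $\m_S$ alone.

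The repair is the elementary argument your first sketch gestures at, and it needs no retraction.
\begin{enumerate}
\item Let $0\to\Omega_S N\to S^b\to N\to 0$ be a minimal $S$-presentation, so $\Omega_S N\subseteq \m_S S^b$. Then $R^b\to S^b\to N$ is a minimal $R$-presentation.
\item Its kernel is the preimage of $\Omega_S N$. It consists of the pairs $(x,y)$ with $x\in\Omega_S N$ and $y\in\m_T^b$.
\item Since $\m_S\times 0$ and $0\times\m_T$ are ideals of $R$, this gives $\Omega_R N\cong \Omega_S N\oplus \m_T^{\,b}$ as $R$-modules.
\item Hence $P^R_N=\beta^S_0(N)\,(1+tP^R_{\m_T})+tP^R_{\Omega_S N}=\beta^S_0(N)P^R_S+tP^R_{\Omega_S N}$.
\item Iterating, and comparing coefficients of each $t^j$, yields $P^R_N=P^S_NP^R_S$.
\end{enumerate}
With this in place of the retraction, your Steps 2 and 3 go through unchanged. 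This syzygy splitting is the remark of Dress and Kr\"amer that the paper invokes later, in the proof of Corollary \ref{sbr4}.
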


The following is a consequence of Lemma \ref{acg3} and Theorem \ref{acg5}.
\begin{lemma}\label{ds4}
Let $(R, \m, k)$ be an Artinian Gorenstein local ring such that $\mu(\m) = n$, $\lo(R) \geq 2$ and $\mu(\m^2) \leq 2$. Let $i$ be an integer satisfying $2 \leq i \leq \lo(R)$. Then $R/ \m^i$ is a Golod ring  and $ \displaystyle P^{R/ \m^i}_k(t) = \frac{1}{1 - nt}$. 
\end{lemma}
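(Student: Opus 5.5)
We would argue by cases on $n$, on $i$, and on how $\mu(\m^2)$ compares with $n$. The two tools are the connected sum decomposition of Corollary \ref{ascs}, which turns $R/\m^i$ into a fibre product, and the Dress--Kr\"amer formula (Theorem \ref{prl10.6}) for the Poincar\'e series of a fibre product.

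\emph{Trivial cases.} If $i=2$, then $R/\m^2$ has square-zero maximal ideal. Such a ring is Golod with $P^{R/\m^2}_k(t)=1/(1-nt)$, which covers $\lo(R)=2$ as well. If $n=1$, then $R/\m^i\cong$ a hypersurface $k[[x]]/(x^i)$. A hypersurface is Golod, and here $P_k=(1+t)/(1-t^2)=1/(1-t)$. From now on $n\ge 2$ and $3\le i\le s=\lo(R)$, so $\lo(R)\ge 3$, and Lemma \ref{acg3} gives $x$ with $\m^2=x\m$.

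\emph{Decomposable case $\mu(\m^2)\le n-1$.} Apply Theorem \ref{acg5} with $m=\mu(\m^2)\in\{1,2\}$. It gives generators with $(x_1,\dots,x_m)(x_{m+1},\dots,x_n)=0$ and $(x_{m+1},\dots,x_n)^2=\soc(R)=\m^s\subset\m^i$. Hence
\[R/\m^i\cong S/\p^i\times_k T/\q^2,\]
where $T/\q^2$ has square-zero maximal ideal and embedding dimension $n-m$. The fibre product identification must be checked carefully: the socle identification in $R=S\#T$ dies modulo $\m^i$. Both factors have Poincar\'e series of the form $1/(1-et)$; for $S/\p^i$ this is the case $\edim\le 2$ handled below, or $n=1$. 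Theorem \ref{prl10.6} then gives
\[\frac{1}{P_k}=(1-mt)+(1-(n-m)t)-1=1-nt.\]
Golodness of the fibre product should follow from the known fact that a fibre product of Golod rings is Golod. Alternatively it follows from comparing this series with the Golod bound, once the Koszul homology of $R/\m^i$ is computed.

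\emph{Embedding dimension two (the main obstacle).} This is the case $n=\mu(\m^2)=2$, and it also covers $S$ above. Here $R/\m^i=Q/J$ with $Q$ regular of dimension $2$, so $R/\m^i$ has codepth $2$. By Lemma \ref{ds3} it is not Gorenstein, hence not a complete intersection. By Scheja's theorem, codepth two rings that are not complete intersections are Golod. Hilbert--Burch then gives
\[P^{R/\m^i}_k=\frac{(1+t)^2}{1-rt^2-(r-1)t^3},\qquad r=\mu(J).\]
For $r=3$ this equals $(1+t)^2/\bigl((1+t)^2(1-2t)\bigr)=1/(1-2t)$, so it remains to show $\mu(J)=3$; this is the step I expect to be hardest. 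Since $\m^2=x\m$, we have $y^2=ax^2+bxy$ in $R$, giving an order-two element $g\in J$. The intended identification is $J=(g)+\n^i$, to be proved by comparing Hilbert functions of $Q/(g,\n^i)$ and $R/\m^i$; the latter needs $\mu(\m^j)=2$ for $2\le j<i$, which must be checked. In $Q/(g)$, a one-dimensional ring of multiplicity $2$, the ideal $\n^i$ is $2$-generated, so $\mu(J)\le 3$. Non-complete-intersection forces $\mu(J)\ge 3$. If the Hilbert function drops before $i$, I would instead compute $\mu(J)$ directly from the standard basis $x^{j},x^{j-1}y$ of $\m^j$.
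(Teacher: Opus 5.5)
\textbf{Gap: the step $\mu(J)=3$ in embedding dimension two.} Your case structure matches the paper's. The trivial cases come first. In embedding dimension two you use Scheja, Lemma~\ref{ds3} and a count of Koszul homology. Otherwise you use the splitting $R/\m^i\cong S/\p^i\times_k T/\q^2$ from Corollary~\ref{ascs}, followed by Lescot's theorem and the Dress--Kr\"amer formula. However, the step you flagged as hardest, $\mu(J)=3$ in embedding dimension two, is not established.

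The identification $J=(g)+\n^i$ cannot hold in general. Since $Q/(g)$ has Hilbert function $1,2,2,2,\dots$, the ring $Q/((g)+\n^i)$ always has length $2i-1$. But $\ell(R/\m^i)=\sum_{j<i}\mu(\m^j)$ is strictly smaller as soon as $\mu(\m^j)=1$ for some $2\le j\le i-1$, and this happens under the hypotheses of the lemma. For instance, $R=k[[x,y]]/(xy,\,x^5-y^3)$ is a complete intersection with $\m^2=(x+y)\m$, $\mu(\m^2)=2$, Hilbert function $1,2,2,1,1,1$ and $\lo(R)=5$. For $i=4$ one gets $J=(xy,x^4,y^3)$, of colength $6$, while $(g)+\n^4$ has colength $7$ for every order-two $g\in I$. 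Your fallback (``compute $\mu(J)$ directly from the standard basis'') is only a plan. So the case where the Hilbert function drops before degree $i-1$ remains unproved as written.

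\textbf{Repair.} You do not need to identify $J$ at all; apply the multiplicity bound you already used for $\n^i$ to $J$ itself. In a one-dimensional Cohen--Macaulay local ring $A$, every ideal $\mathfrak a$ of finite colength satisfies $\mu(\mathfrak a)\le e(A)$. Indeed, if $w$ generates a minimal reduction of the maximal ideal (after enlarging the residue field if necessary), then $\mu(\mathfrak a)\le\ell(\mathfrak a/w\mathfrak a)=\ell(A/wA)=e(A)$. Take $A=Q/(g)$, which has multiplicity $2$, and $\mathfrak a=J/(g)$; this gives $\mu(J)\le 3$. This is the familiar bound $\mu(J)\le\operatorname{ord}(J)+1$ in a two-dimensional regular local ring. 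Together with $\mu(J)\ge 3$ you get $\mu(J)=3$ for every $3\le i\le\lo(R)$, and the rest of your argument goes through. The paper disposes of this step in one line (``since $R$ is a complete intersection, the defining ideal of $\bar R$ is minimally generated by $3$ elements''); the bound above is what makes that line correct.
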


\begin{proof}
Fix $i$ satisfying $2\leq i \leq \lo(R)$ and set $\displaystyle \bar{R} = \frac{R}{\m^i}$. The result is clear when $n=1$ or $\lo(R) = 2$. First we assume that $n = 2$ and $\lo(R) > 2$. A result of Scheja \cite{scheja} states that a codepth 2 local ring is either a Gorenstein (equivalently complete intersection) or a Golod ring. The ring $\bar{R}$ cannot be a Gorenstein ring by Lemma \ref{ds3} so $\bar{R}$ is a Golod ring. Since $R$ is a complete intersection, the defining ideal of $\bar{R}$ is minimally generated by $3$ elements. It follows that, $\kappa^{\bar{R}}(t) = \sum_{i \geq 0} \dim_k \Ho_i(K^{\bar{R}})t^i = 1 + 3t + 2t^2$ and

\[(1 - t(\kappa^{\bar{R}}(t) - 1)) = 1 - t(1 + 3t + 2t^2 - 1) = (1 + t)^2(1 - 2t).\] We have $\displaystyle P^{\bar{R}}_k(t) = \frac{(1+t)^2}{1 - t(\kappa^{\bar{R}}(t) - 1)} = \frac{1}{1 - 2t}$. 

Now we assume that $n > 2$ and $\lo(R) > 2$. By Corollary \ref{ascs}, it follows that $R = S \# T$ where $(S, \p)$, $(T, \q)$ are Gorenstein local rings, $\edim(S) = \mu(\m^2) \leq 2$, 
%$\edim(T) + \edim(S)$ = n
, $\lo(S) = \lo(R) \geq 3$ and $\lo(T) = 2$. One has $\bar{R} = \bar{S} \times_k \bar{T}$ where $\bar{S} = S/ \p^i$ and $\bar{T} = T/ \q^2$.
Both $\bar{S}$, $\bar{T}$ are Golod rings. The ring $\bar{R}$ is a Golod ring because a fibre product of Golod rings is Golod , cf.\cite[Theorem 4.1]{les1}.

We have $\displaystyle P^{\bar{S}}_k(t) = \frac{1}{1- \edim(\bar{S})t}$ by the case $n = 2$ and $\displaystyle P^{\bar{T}}_k(t) = \frac{1}{1-\edim(\bar{T})t}$ (\cite[Example 4.2.2]{av}). The formula for the Poincar\'e series follows from Theorem \ref{prl10.6}.
\end{proof}

The following is a well known fact.
\begin{lemma}\label{ds4.5}
Let $(R, \m, k)$ be a local ring, $x$ be a nonzero divisor of $R$ and $S = R/ (x)$. If there exists a polynomial $d(t) \in \Zi[t]$ such that $d(t) P^S_M(t) \in \Z[t]$ for all $S$-module $M$, then $d(t) P^R_M(t) \in \Z[t]$ for all $R$-module $M$. 
Now assume further that $x \in \m \setminus \m^2$, then $\displaystyle P^S_k(t) = \frac{P^R_k(t)}{1 + t}$. The ring  $S$ is Golod if and only if $R$ is so.
\end{lemma}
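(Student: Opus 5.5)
We prove the three assertions of Lemma \ref{ds4.5} separately, in each case comparing resolutions over $R$ and over $S=R/(x)$.

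\textbf{First assertion.} Let $M$ be an $R$-module and set $N = \syz^R_1(M)$, the first syzygy. Since $N$ is a submodule of a free $R$-module and $x$ is a nonzero divisor on $R$, $x$ is a nonzero divisor on $N$. For a module $N$ with $x$ regular on both $R$ and $N$, a minimal $R$-free resolution $F$ of $N$ reduces to $F/xF$, which is a minimal $S$-free resolution of $N/xN$: acyclicity holds because $\Tor^R_i(R/(x),N)=0$ for $i\ge 1$, and minimality is preserved. Hence $P^R_N(t) = P^S_{N/xN}(t)$. It follows that
\[P^R_M(t) = \beta_0^R(M) + t\,P^S_{N/xN}(t).\]
Multiplying by $d(t)$ gives a polynomial, because $d(t)P^S_{N/xN}(t)\in\Z[t]$ by hypothesis. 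The only point to check here is that the reduction $F/xF$ is indeed minimal and acyclic.

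\textbf{Poincar\'e series formula.} This is the classical result that factoring out an element of $\m\setminus\m^2$ that is regular on $R$ multiplies $P_k$ by $(1+t)^{-1}$ (see \cite[Theorem 2.2.3]{av} or Nagata). The plan is to work with acyclic closures. Choose a minimal generating set $x=x_1,x_2,\ldots,x_n$ of $\m$, and let $R\langle X\rangle$ be the acyclic closure of $k$ over $R$, with $\partial X_1 = x$. Because $x$ is regular, $R\langle X_1\rangle = K(x)$ is a resolution of $S$. Therefore $R\langle X\rangle\otimes_R S$ computes $\Tor^R(S,k)\otimes$-data, and one identifies $R\langle X\rangle/(X_1, x)$ with the acyclic closure of $k$ over $S$. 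Counting divided-power and exterior variables then yields
\[P^R_k(t) = (1+t)\,P^S_k(t).\]
An alternative route, which I would take if the identification above resists, is the change of rings spectral sequence for $R\to S$. Here $\Tor^R(S,k)=k\oplus k[1]$, and the sequence degenerates because $x\notin\m^2$, so the map $\Tor^R(k,k)\to\Tor^S(k,k)$ is injective in degree one.

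\textbf{Golod equivalence.} Let $K^R$ and $K^S$ be the Koszul complexes. Since $x$ is regular and minimal, $H(K^R)\cong H(K^S)$ as algebras; more precisely $K^R\to K^S\otimes\Lambda(X_1)$ is a quasi-isomorphism. Taking Cohen presentations $Q\to\hat R$ and $Q/(\tilde x)\to\hat S$ with $\tilde x$ a regular parameter, this gives $P^Q_R(t)=P^{Q/(\tilde x)}_S(t)$. Combining with the formula above, the Golod equality
\[P_k(t)=\frac{(1+t)^{\edim}}{1-t(P^Q_{\cdot}(t)-1)}\]
holds for $R$ if and only if it holds for $S$, since both sides change by exactly a factor $(1+t)$.

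The main obstacle is the middle step: rigorously identifying the acyclic closures, equivalently establishing degeneration of the spectral sequence. I will rely on the standard references for it rather than redoing it.
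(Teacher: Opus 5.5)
Your first assertion is argued as in the paper: pass to the first syzygy $N$, note that $x$ is regular on $N$, and observe that a minimal $R$-free resolution $F$ of $N$ reduces to a minimal $S$-free resolution $F/xF$ of $N/xN$, so $P^R_M(t)=\beta^R_0(M)+tP^S_{N/xN}(t)$.

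For the other two assertions the paper gives no argument; it cites \cite[Proposition 3.3.5(1)]{av} and \cite[Proposition 5.2.4]{av}. Your main lines are the standard proofs behind those citations and are sound. They make the lemma self-contained, whereas the paper's citation is shorter.

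For the Poincar\'e series, state the key step explicitly. $R\langle X\rangle$ is semi-free over its DG subalgebra $R\langle X_1\rangle$, so base change along the quasi-isomorphism $R\langle X_1\rangle\to S$ shows that $R\langle X\rangle\otimes_{R\langle X_1\rangle}S=S\langle X_2,X_3,\ldots\rangle$ is acyclic. It is minimal because $\partial(R\langle X\rangle)\subseteq\m R\langle X\rangle$, and counting monomials gives $P^R_k(t)=(1+t)P^S_k(t)$.

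For the Golod equivalence, combine three facts: $\Ho(K^R)\cong\Ho(K^S)$, $\edim(S)=\edim(R)-1$, and the formula just proved. Together they show both sides of the Golod equality change by the same factor $(1+t)$.

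Two side remarks in your proposal are false and should be corrected.

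First, in the fallback spectral-sequence route, the map $\Tor^R_1(k,k)\to\Tor^S_1(k,k)$ is the projection $\m/\m^2\to\m/(\m^2+xR)$. It is surjective with kernel spanned by the class of $x$, so it is never injective when $x\notin\m^2$. Degeneration of the change-of-rings spectral sequence amounts to surjectivity of $\Tor^R(k,k)\to\Tor^S(k,k)$, which is exactly what the acyclic-closure argument delivers.

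Second, $K^R\to K^S\otimes\Lambda(X_1)$ is not a quasi-isomorphism. The target is $K^R\otimes_R S$, in which $\partial X_1=0$, so its homology is $\Ho(K^S)\otimes\Lambda(X_1)$; for example, in degree one it has dimension $\dim_k\Ho_1(K^S)+1$. The correct quasi-isomorphism, used in the paper's proof of Theorem \ref{ged4}, is the quotient map
$K^R=K(x)\otimes_R K(x_2,\ldots,x_n)\to S\otimes_R K(x_2,\ldots,x_n)=K^S$,
which kills $X_1$. It is a quasi-isomorphism because $K(x)\to S$ is one and $K(x_2,\ldots,x_n)$ is a bounded complex of free modules. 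This is what justifies $\Ho(K^R)\cong\Ho(K^S)$, and hence $P^Q_R(t)=P^{Q/(\tilde x)}_S(t)$, in your Golod argument.
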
 

\begin{proof}
Let $M$ be an $R$-module and $N$ be the first syzygy of $M$. We have the following exact sequence: 
\[0 \rightarrow N \rightarrow R^{\mu(M)} \rightarrow M \rightarrow 0.\]

This implies that $P^R_M(t) = \mu(M) + t P^R_N(t)$. Therefore, it is enough to show that $d(t) P^R_N(t) \in \Z[t]$. Let $F_* \twoheadrightarrow N$ be the minimal free resolution of $N$ over $R$. Note that $x$ is also a nonzero divisor of $N$. This implies that $S \otimes_R F_* \twoheadrightarrow S \otimes_R N$ is a minimal free resolution of $S \otimes_R N = \bar{N}$ as an $S$ module. As a result, we have $P^R_N(t) = P^S_{\bar{N}}(t)$. Therefore, the first part of the lemma follows from the hypothesis. 

The assertions regarding Poincar\'e series and Golod property follow from  \cite[Proposition 3.3.5.(1)]{av}, \cite[Proposition 5.2.4]{av} respectively.
\end{proof}

The following is due to \c Sega \cite[Proposition 1.5]{sega1}.

\begin{proposition}\label{ar1}
Let $R$ be a local ring such that there is a $d_R(t) \in \Z[t]$ satisfying $d_R(t) P^R_M(t) \in \Z[t]$ for each finitely generated $R$-module $M$. Let $d_R(t) = p(t)q(t)r(t)$ where $p(t)$ is $1$ or irreducible; $q(t)$ has non-negative coefficients; $r(t)$ is $1$ or irreducible and has no positive real root among its complex roots of minimal absolute value.
Then the following hold for each pair of $R$-modules $M$, $N$.
\begin{enumerate}
\item 
If $\Tor^R_i(M, N) = 0$ for $i \gg 0$, then either $M$ or $N$ has finite projective dimension. 

\item
If $\Ext^i_R(M, N) = 0$ for $i \gg 0$, then either $M$ has a finite projective dimension or $N$ has a finite injective dimension.
\end{enumerate}
\end{proposition}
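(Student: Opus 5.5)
This statement is due to Şega \cite[Proposition 1.5]{sega1} and the paper only quotes it, so the plan below reconstructs the standard argument rather than anything new. The strategy is to compare growth rates of Betti numbers, read off from the denominator $d_R(t)$. The key constraint is a rigidity on the radius of convergence of Poincaré series over $R$, which we combine with the standard change-of-rings and length estimates for vanishing $\Tor$ and $\Ext$.

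\textbf{Step 1: reduce (2) to (1).} First pass to the completion; Betti numbers and vanishing are unaffected. Then use Matlis duality: $\Ext^i_R(M,N)^\vee \cong \Tor_i^R(M,N^\vee)$ when $N$ has finite length. For general $N$, reduce to finite length by cutting down with a maximal regular sequence and taking syzygies. Finite projective dimension of $N^\vee$ translates into finite injective dimension of $N$. So the main content is (1).

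\textbf{Step 2: prove (1) by contradiction.} Assume $\Tor_i(M,N)=0$ for $i\gg0$, and that both $M$ and $N$ have infinite projective dimension. Replacing $M$ and $N$ by high syzygies, we may assume $\Tor_{\ge1}(M,N)=0$. Then $F\otimes N$ is a resolution of $M\otimes N$ when $F$ is the minimal resolution of $M$. Two consequences follow:
\begin{itemize}
\item a length estimate gives $\beta_i(M)\le C\,\beta_{i+c}(M\otimes N)$;
\item tensoring minimal resolutions gives an inequality relating $P_M P_N$ and $P_{M\otimes N}$; after syzygy shifts and a Koszul-type comparison over $k$, this becomes $P_M(t)P_N(t)\prec (\text{poly})\cdot P_{M\otimes N}(t)$.
\end{itemize}

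\textbf{Step 3: compare radii of convergence.} All three series are rational with denominator $d_R$. Their radii of convergence $\rho$ are therefore moduli of roots of $d_R$, and they are $<1$ by infinite projective dimension; more precisely, each is $\ge$ the minimal root modulus $\rho_0$. From the product inequality, $P_MP_N$ has a pole of order at least $\operatorname{ord}_M+\operatorname{ord}_N$ at the positive real point $\rho$ (Pringsheim), while $P_{M\otimes N}$ has pole order at most the multiplicity of that root in $d_R$.

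\textbf{Step 4: use the factorization $d=pqr$ to reach a contradiction.} This is the main obstacle. Since $q$ has nonnegative coefficients, it has no positive real roots. Hence the positive real singularities $\rho$ of $P_M$ and $P_N$ are roots of $p$ or $r$. If $\rho$ has minimal modulus, it cannot come from $r$, so it comes from $p$, which is irreducible. Irreducibility forces every root of $p$ to be simple, so each series has a pole of order at most $1$ there. On the other hand, the product has a pole of order $\ge2$, and this must be dominated by $P_{M\otimes N}$, which is also a simple pole at most. Handling the case where the radii differ, including roots of $r$ of non-minimal modulus, needs a finer growth comparison via the length estimate. This case analysis is where I expect the most difficulty.
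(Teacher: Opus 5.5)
\textbf{Step 4 (the main gap).} Step 4 stops exactly where the content of the proposition lies. The remedy you suggest, a finer growth comparison via a length estimate, is not what closes it, and that estimate is neither justified nor needed.

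The missing idea is that irreducibility makes cancellation all-or-nothing: if $g\in\mathbb{Q}[t]$ is irreducible and $g\nmid b$, then $b$ vanishes at no root of $g$. Combine this with an exact identity rather than an inequality. Once $\Tor^R_{\geq 1}(M,N)=0$, the tensor product $F\otimes_R G$ of the minimal resolutions is a \emph{minimal} free resolution of $M\otimes_R N$. Hence $P^R_{M\otimes N}=P^R_MP^R_N$, i.e. $a_Ma_N=a_{M\otimes N}\,d_R$ with $a_X=d_RP^R_X\in\Z[t]$, and so $p\mid a_M$ or $p\mid a_N$.

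Now suppose $\pd_R M=\infty$ and $p\mid a_M$, so $P^R_M=b/(qr)$. By Pringsheim, the radius of convergence $\rho\leq 1$ is a pole. Since $q>0$ on $(0,\infty)$, we get $r(\rho)=0$, so $r\neq 1$ and $r\nmid b$. By irreducibility every root $\alpha$ of $r$ is then a pole of $P^R_M$, so $\rho\leq|\alpha|$. Thus $\rho$ is a positive real root of $r$ of minimal modulus, contrary to the hypothesis on $r$. So $p\nmid a_M$ whenever $\pd_R M=\infty$. Applied to both $M$ and $N$, this contradicts $p\mid a_Ma_N$, and there is no need to compare the radii of $P^R_M$ and $P^R_N$.

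Your pole-order version can be finished the same way: a pole of $P^R_M$ at one root of $p$ forces poles at all roots of $p$, in particular at $\rho_N$. But your claim that $P^R_{M\otimes N}$ has at most a simple pole there also needs $p\neq r$.

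\textbf{Step 1 (reduction of (2) to (1)).} Step 1 is justified only when $N$ has finite length. In that case Matlis duality over $R$ itself, with no completion needed, turns (2) for $(M,N)$ into (1) for $(M,N^\vee)$. Your reduction of a general $N$ does not work, for several reasons:
\begin{enumerate}
\item cutting down by a maximal $N$-regular sequence gives depth zero, not finite length;
\item an $R$-regular sequence need not be $N$-regular, and it changes the ring;
\item syzygies of $N$ do not preserve vanishing of $\Ext^{\gg 0}_R(M,-)$;
\item it is not clear that $d_R$ remains a common denominator for finitely generated $\hat R$-modules.
\end{enumerate}
What does hold for every $N$ is the following. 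After replacing $M$ by a syzygy so that $\Ext^{\geq 1}_R(M,N)=0$, the complex $\Hom_R(F,J)$ is a minimal injective resolution of $\Hom_R(M,N)$, where $F$ is the minimal free resolution of $M$ and $J$ the minimal injective resolution of $N$. This gives $I^{\Hom_R(M,N)}_R(t)=P^R_M(t)\,I^N_R(t)$ for the Bass series. So the argument above yields (2) once you know that $d_R(t)I^N_R(t)\in\Z[t]$ for every finitely generated $N$. That is an extra input you must supply or cite.

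For Artinian $R$, as in Theorem~\ref{intro3.5}, this issue does not arise. The paper gives no proof of its own; it quotes \c Sega.
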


We are prepared to prove Theorem \ref{intro3.5}.

\subsection{Proof of Theorem \ref{intro3.5}}
\begin{proof}
%By Lemma \ref{ds4.5}, it is enough to prove statements (1), (2) when the ring $R$ is Artinian, i.e. $d = 0$. 
The case $n = 1$ is easy. We skip the details.

%:
%We assume that $n = 1$. Then $P^{R}_k(t) = \frac{1}{1 - t}$. The ring $R$ is a quotient of a DVR. Using the structure theorem of modules over PID, the module $M$ is a direct summand of quotients of $R$. So $P^R_M(t) = \mu(M) + \mu(\syz_1^{R}(M))\frac{t}{1 - t}$. This implies that $(1- t)P^R_M(t) \in \Z[t]$. So (1) follows.

Now we assume that $n \geq 2$. The quotient map $\displaystyle R \twoheadrightarrow \frac{R}{\soc(R)}$ is a Golod homomorphism and $\displaystyle P^{R/ \soc(R)}_k(t) = \frac{P^R_k(t)}{ 1 - t^2 P^R_k(t)}$, cf. \cite[Theorem 2]{lev1}. 
Therefore, we have
\[
P^{R}_k(t) 
= \frac{P^{R/ \soc(R)}_k(t)}{ 1 + t^2 P^{R/ \soc(R)}_k(t)}
= \frac{1}{ \frac{1}{P^{R/ \soc(R)}_k(t)} + t^2}
=\frac{1}{1 - nt + t^2}.
\]
The last equality follows from Lemma \ref{ds4}. The same Lemma states that $R/ \soc(R)$ is a Golod ring. By Theorem \ref{intro2}, the ring $R$ is a surjective image of a complete intersection under a Golod homomorphism. The second part of the  (2) is a consequence of Theorem \ref{prl9.5}.

Now we prove (3). If the projective dimension $\pd_R(M)$ of $M$ is finite, then $\Ext^{\pd_R(M)}_R(M, M) \neq 0$. Therefore, it is enough to show that $\pd_R(M) < \infty$.
When $n \leq 2$, $R$ is a complete intersection. The statement follows from a result of Avramov and Buchweitz, cf. \cite[Theorem 4.2]{avbu}. When $n \geq 3$, the polynomial $(1 - nt + t^2)$ is irreducible. The statement follows from Proposition \ref{ar1} (take $p(t) = (1 - nt + t^2)$, $q(t) = (1 + t)^{n}$ and $r(t) = 1$). Here one uses the fact that a module over a Gorenstein ring has a finite projective dimension if and only if it has a finite injective dimension. 
%a result of Sega \cite[Proposition 1.5]{sega1} and by an argument similar to \cite[Corollary 1.8]{sega1}.
\end{proof}

Stretched and almost stretched rings were introduced by Sally \cite{sally} and Elias, Valla \cite{elias} respectively. 

\begin{definition}\label{intro0}
An Artinian local ring $R$ with maximal ideal $\m$  is called stretched if $\m^2$ is a principal ideal and  almost stretched if $\m^2$ is minimally generated by two elements.

Let $R$ be a Cohen-Macaulay local ring of dimension $d$ with maximal ideal $\m$.  Then $R$ is called stretched (almost stretched) if there exists a minimal reduction $\underline{x} = x_1, \ldots, x_d$ of $\m$ 
%(i.e. there exist $d$ elements $x_1, \ldots, x_d$ of $\m$ such that $\m^{r+1} = (x_1, \ldots, x_d)\m^r$ for some non-negative integer $r$)
 such that $R/(\underline{x})$ is a stretched (almost stretched) Artinian ring. Here by minimal reduction we mean that $\underline{x}$ satisfies $\m^{r+1} = (x_1, \ldots, x_d)\m^r$ for some non-negative integer $r$.
\end{definition}

Stretched Cohen-Macaulay local rings are shown to be good in a recent article \cite[Corollary 5.6]{amenda}. We outline a different method. 
In statement (2) of the following corollary, we find a more efficient common denominator of Poincar\'e series of modules over such rings.

\begin{corollary}\label{sbr4}
Let $(R, \m, k)$ be a $d$-dimensional stretched Cohen-Macaulay local ring and $M$ be an $R$-module. Let $n = \dim_k \m/ \m^2$ and $r = \dim_k \Ext_R^d(k, R)$ denote the type of $R$. Then the following hold. 
\begin{enumerate}
\item
If $r = n - d$, then $R$ is a Golod ring, $\displaystyle P^R_k(t) = \frac{(1+t)^d}{1 - (n-d)t}$ and $(1 - (n-d)t) P^R_M(t) \in \Z[t]$.

\item
If $r \neq n - d$, then $\displaystyle P^R_k(t) = \frac{(1+t)^d}{1 - (n-d)t + t^2}$ and $(1 + t)^{n-d - r + 1}(1 - (n-d)t + t^2) P^R_M(t) \in \Z[t]$.

\item
If $\Ext^i(M, M \oplus R) = 0$ for all $i \geq 1$, then $M$ is a free $R$-module.
\end{enumerate}
\end{corollary}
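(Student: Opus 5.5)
Since $\underline{x}$ is a minimal reduction of $\m$ in a Cohen--Macaulay ring, I first reduce to the Artinian case. After passing to $R[X]_{\m R[X]}$ if necessary (this changes neither Betti numbers nor the hypotheses), I may take $k$ infinite. Then $\underline{x}$ can be chosen superficial, so that $x_1,\ldots,x_d$ is a regular sequence with $x_i \in \m\setminus\m^2$ and linearly independent modulo $\m^2$, and $A = R/(\underline{x})$ is an Artinian stretched ring. The ring $A$ has $\edim(A) = n-d$ and type $r$, since type is preserved modulo a regular sequence.

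Applying Lemma \ref{ds4.5} $d$ times gives $P^R_k(t) = (1+t)^d P^A_k(t)$. It also shows that $R$ is Golod iff $A$ is, and that any common denominator for $A$-modules is one for $R$-modules. Hence (1) and (2) reduce to the Artinian stretched ring $A$ of embedding dimension $e = n-d$ and type $r$. Write $s = \lo(A)$. Since $\m_A^2$ is principal, $\m_A^i = (u^i)$ for a suitable $u$ by Lemma \ref{acg3}.

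\textbf{Case $r = e$.} I expect this to force $s \le 2$, or more generally $\m_A^2 \subset \soc(A)$ after the structure analysis. Then $\m_A^3 = 0$ and $\m_A^2 \subseteq \soc$, and a ring with $\m^3=0$ whose socle equals $\m^2$ plus $\m$-parts is Golod. More precisely, $A = B \times_k C$ type decompositions with $C$ having square-zero maximal ideal give Golodness via \cite[Theorem 4.1]{les1}, and $P^A_k = 1/(1-et)$ follows from Theorem \ref{prl10.6}. Levin's theorem \ref{prl9.5}, with the regular ring mapping Golod onto $A$, then gives the common denominator $1-et$. Here $(1+t)^e$ cancels because $P^A_k(t) = 1/(1-et)$ exactly; for a Golod ring, Poincaré series of syzygies have denominator $1 - t(P^Q_A-1)$, which must equal $(1+t)^e(1-et)$.

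\textbf{Case $r \ne e$.} The socle of a stretched Artinian ring consists of $\m^s$ together with $r-1$ linear socle elements. I write $A$ as a fibre product $A = G \times_k k[z_1,\ldots,z_{r-1}]/(z)^2$, where $G$ is a stretched Artinian Gorenstein ring of embedding dimension $e-r+1 \geq 2$. By Theorem \ref{intro3.5}(2), $P^G_k = 1/(1-(e-r+1)t+t^2)$ and $(1+t)^{e-r+1}(1-(e-r+1)t+t^2)$ is a common denominator for $G$-modules. Theorem \ref{prl10.6} then gives $1/P^A_k = 1-(e-r+1)t+t^2 + 1-(r-1)t-1 = 1-et+t^2$. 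For modules over a fibre product, the second syzygies are direct sums of modules over $G$ and over the other factor (Dress--Kr\"amer). Combining Theorem \ref{prl10.6} for $G$-modules with the analogous formula for modules over the square-zero factor yields the stated denominator $(1+t)^{e-r+1}(1-et+t^2)$.

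The main obstacle is establishing this fibre-product decomposition in the non-Gorenstein stretched case. I would prove it by an argument like Theorem \ref{acg5}, splitting off the linear socle elements: choose $x_1$ with $\m^2 = x_1\m$, adjust the other generators to annihilate $\m$ or to pair with $x_1$ only, and check that the remaining part is Gorenstein. Part (3) then follows as in Theorem \ref{intro3.5}. For $e \ge 3$, Proposition \ref{ar1} applies with $p = 1-et+t^2$ (irreducible) and $q$ the power of $(1+t)$. Vanishing of $\Ext^{i}(M,M\oplus R)$ gives either $\pd M<\infty$, or $\operatorname{id} M<\infty$, and in the latter case $\Ext^{i}(M,R)=0$ forces $M$ free via Auslander--Bridger. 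The small cases $e\le 2$ are complete intersections or Golod, and are handled by \cite{avbu} and by the Golod case of Proposition \ref{ar1} respectively.
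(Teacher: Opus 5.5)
Your overall plan matches the paper's: reduce to $A=R/(\underline{x})$ with $e=n-d$ via Lemma~\ref{ds4.5}, write $A$ as a fibre product of a stretched Gorenstein ring and a ring with square-zero maximal ideal, then combine Theorem~\ref{prl10.6} with the Dress--Kr\"amer splitting of second syzygies. Your treatment of the case $r\neq e$ also matches the paper. The case $r=e$ does not work as written.

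First, $r=e$ does not force $\lo(A)\le 2$ or $\m_A^2\subseteq\soc(A)$. For example, $A=k[x,y]/(xy,y^2,x^4)$ is stretched with $\soc(A)=(x^3,y)$, so $r=e=2$, yet $\lo(A)=3$. The correct structure is $A\cong S\times_k T$ with $\edim S=1$ ($S$ a quotient of a DVR, of any Loewy length) and $\m_T^2=0$.

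Second, and more seriously, Theorem~\ref{prl9.5} applied to the Golod map $Q\to A$ only gives the common denominator $(1+t)^e/P^A_k(t)=(1+t)^e(1-et)$. That $(1+t)^e$ cancels in $P^A_k$ says nothing about $P^A_M$ for other $M$. Your justification, that syzygies over a Golod ring have denominator $1-t(P^Q_A(t)-1)=(1+t)^e(1-et)$, supports the extra factor rather than removing it. To get $(1-et)P^A_M(t)\in\Z[t]$ you must run the argument you already use for $r\neq e$. After reducing to $M=M_1\oplus M_2$, Theorem~\ref{prl10.6} gives $(1-et)P^A_M=P^S_{M_1}/P^S_k+P^T_{M_2}/P^T_k$. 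The first summand is $(1-t)P^S_{M_1}\in\Z[t]$ by Theorem~\ref{intro3.5}(1). The second is a polynomial because first syzygies over $T$ are $k$-vector spaces.

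The fibre-product decomposition, which you leave as the main obstacle, is where the proof actually lives. The route you sketch (start from $x_1$ with $\m^2=x_1\m$ and imitate Theorem~\ref{acg5}) is not the right tool, since that proof runs on Matlis duality with a one-dimensional socle. The decomposition is elementary:
\begin{enumerate}
\item If $A$ is not Gorenstein, then $\soc(A)\not\subseteq\m^2$, because $\soc(A)\cap\m^2\subseteq\m^s$ is at most one-dimensional.
\item Take $x_1\in\soc(A)\setminus\m^2$ and extend it to a minimal generating set $x_1,\ldots,x_e$ of $\m$. Since $(x_1)=kx_1$, minimality gives $(x_1)\cap(x_2,\ldots,x_e)=0$, and the two ideals sum to $\m$.
\item Hence $A\cong A/(x_1)\times_k A/(x_2,\ldots,x_e)$. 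The second factor is $k[x_1]/(x_1^2)$, and the first is again stretched with socle dimension $r-1$.
\item Iterating until the first factor is Gorenstein gives $A\cong G\times_k T$ with $\m_T^2=0$ and $\edim T=r-1$.
\end{enumerate}

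Finally, in (3) the step ``$\operatorname{id}M<\infty$ and $\Ext^i(M,R)=0$ for $i\ge 1$ force $M$ free by Auslander--Bridger'' is not justified. For $M$ the canonical module it is exactly the commutative Tachikawa conjecture. Instead, apply Proposition~\ref{ar1}(2) to the pair $(M,M\oplus R)$. The alternative to $\pd M<\infty$ is then $\operatorname{id}(M\oplus R)<\infty$, which gives $\operatorname{id}R<\infty$. So $R$ is Gorenstein and $\pd M<\infty$ anyway, and $\Ext^{\pd M}(M,R)\neq 0$ forces $\pd M=0$. For $e\ge 3$ with $r=e$, use $p(t)=1-et$ rather than $1-et+t^2$.
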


\begin{proof}
To prove statements (1), (2), it is enough to assume that $R$ is a stretched Artinian ring, i.e. $d = 0$ (see Lemma \ref{ds4.5}). 
We have $\edim(R) = n$ and  $\dim_k \soc(R) = r$. Let $\m = (x_1, \ldots, x_n)$ and $\lo(R) = s$. The ideals $\m^i, i \geq 2$ are principal ideals. If $\soc(R) \subset \m^2$, then $\soc(R) = \m^s$ is a principal ideal, so $R$ is a Gorenstein ring. Both statements (1), (2) follow from Theorem \ref{intro3.5}.

Otherwise assume that $x_1 \in \soc(R) \setminus \m^2$. One observes that $(x_1) \cap (x_2, \ldots, x_n) = 0$ and $(x_1) + (x_2, \ldots, x_n) = \m$. For any two ideals $I, J$ in a ring $R$, we know that $R = R/ I \times_{R/ I + J} R/ J$. Therefore, it follows that  $R = R/(x_1) \times_k R/(x_2, \ldots, x_n)$. 

The maximal ideal of the ring $R/(x_2, \ldots, x_n)$ is generated by the residue class of $x_1$, so its square is zero since $x_1 \in \soc(R)$. On the other hand, the ring $R/(x_1)$ is a stretched Artinian ring. If the socle of $R/(x_1)$ is contained in the square of its maximal ideal, it is a Gorenstein ring. Otherwise we decompose $R/(x_1)$ again as before.

After a finite number of steps, we have $R = S \times_k T$ where $(S, \m_S)$ is a stretched Artinian Gorenstein ring and $(T, \m_T)$ is a local ring with $\m_T^2 = 0$. Clearly $r = 1 + \edim(T)$. This implies that $\edim(T) = r - 1$ and $\edim(S) = \edim(R) - \edim(T) = n - r + 1$. 
By Theorem \ref{intro3.5}, we have 
%$P^S_k(t) = \frac{1}{1 - t(n - r + 1) + t^2}$ when $n \geq r + 1$ and $P^S_k(t) = \frac{1}{1 - t}$ when $n = r$ . 

\[
P^S_k(t) =  
\begin{cases}
\frac{1}{1 - t(n - r + 1) + t^2} \ &\text{when} \ n \geq r + 1,\\
\frac{1}{1 - t} \ &\text{when} \ n = r.
\end{cases}
\]
On the other hand $T$ is a Golod ring and $\displaystyle P^T_k(t) = \frac{1}{1 - (r-1)t}$.
The rational expression of $P^R_k(t)$ follows by the following computation: 
\begin{align*}
\frac{1}{P^R_k(t)} = \frac{1}{P^S_k(t)}  + \frac{1}{P^T_k(t)} - 1  &= 
\begin{cases}
(1 - t) + 1- (r-1)t - 1 \ & \text{when} \ n = r\\
(1 - t(n - r + 1) + t^2)  + 1- (r-1)t - 1 \ & \text{when} \ n \geq r + 1
\end{cases}\\
& =
\begin{cases}
1- nt \ & \text{when} \ n = r\\
1 - nt + t^2 \ & \text{when} \ n \geq r + 1.
\end{cases}
\end{align*}

If $n = r$, then $\edim(S) = 1$. This implies that $S$ is a Golod ring. Therefore, $R$ is also a Golod ring since a fibre product of Golod rings is Golod, cf. \cite[Theorem 4.1]{les1}.

Now we find a polynomial $d_R(t) \in \Z[t]$ such that $d_R(t)P^R_M(t) \in \Z[t]$ for any $R$-module $M$.
The second syzygy of $M$ is a direct sum of two modules, one is over $S$ and another over $T$, cf. \cite[Rem. 3]{dress}. Therefore, it suffices to assume that $M = M_1 \oplus M_2$ where $M_1$ and $M_2$ are modules over $S$ and $T$ respectively. 
By Theorem \ref{prl10.6} we have
\[
\frac{P^R_M(t)}{P^R_k(t)} = \frac{1}{P^R_k(t)} (P^R_{M_1}(t) + P^R_{M_2}(t))
= \frac{P^S_{M_1}(t)}{P^S_k(t)} + \frac{P^T_{M_2}(t)}{P^T_k(t)}.
\]

We observe that $\displaystyle \frac{P^T_{M_2}(t)}{P^T_k(t)} = \frac{1}{P^T_k(t)}(1 + t P^T_{\syz^T_1(M_2)}(t)) = 1 - (r-1)t + t \frac{P^T_{\syz^T_1(M_2)}(t)}{P^T_k(t)}$. Since the square of the maximal ideal of $T$ is zero, the first syzygy $\syz^T_1(M_2)$ is a $k$-vector space. Therefore, $\displaystyle \frac{P^T_{M_2}(t)}{P^T_k(t)}$ is a polynomial in $\Z[t]$.  

If $n = r$, we have $\edim(S) = 1$. 
%By Cohen's structure theorem, we have  $S = D/ (\pi^n)$ for some DVR $(D, (\pi))$. The module $M_1$ is a module over $D$, which is annihilated by $\pi^n$. By the structure theorem of modules over PID, it follows that $M_1$ is a direct sum of modules of the form  $D/ (\pi^m), 1 \leq m \leq n$ each of which has Poincar\' e series $\displaystyle \frac{1}{1 - t}$ over $S$. This implies that 
By (1) of Theorem \ref{intro3.5}, $\displaystyle \frac{P^S_{M_1}(t)}{P^S_k(t)}$  is a polynomial. Hence we conclude that $\displaystyle (1 - nt)P^R_M(t) = \frac{P^R_M(t)}{P^R_k(t)} \in \Z[t]$ if $n =r$.

If $n \geq r + 1$, we have $\edim(S) = n - r +1$. By (2) of Theorem \ref{intro3.5}, we have $\displaystyle (1 + t)^{(n - r+1)}\frac{P^S_{M_1}(t)}{P^S_k(t)} \in \Z[t]$. Hence we conclude that $\displaystyle (1 + t)^{(n - r+1)}(1 - nt + t^2)P^R_M(t) = (1 + t)^{(n - r+1)}\frac{P^R_M(t)}{P^R_k(t)} \in \Z[t]$ if $n \geq r + 1$.

Therefore, both  statements (1) and (2) follow.
To prove statement (3), it suffices to show that $\pd_R(M) < \infty$.
If $n - d = 2$, then $R$ is either a complete intersection or a Golod ring, cf. \cite{scheja}. In both cases, rings are known to satisfy statement (3), see for instance \cite[Theorem 4.2]{avbu} when $R$ is a complete intersection and \cite[Proposition 1.4]{jor} when $R$ is a Golod ring. If $n - d > 2$, then we see at once that $\pd_R(M) < \infty$ from Proposition \ref{ar1}. Here one observes that if injective dimension of $M \oplus R$ is finite then $R$ is Gorenstein and both projective and injective dimensions of $M$ are finite.
%The statement (3) can be proved by the same method as in its counterpart in Theorem \ref{intro3.5}. 
\end{proof}

The following result follows from Lemma \ref{ds4.5} and Theorem \ref{intro3.5}.
%\begin{corollary}\label{sbr4.6}
%Let $(R, \m, k)$ be a $d$-dimensional almost stretched Gorenstein local ring and $M$ be an $R$-module. Then $R$ satisfies assertions of Theorem \ref{intro3.5}. 
%\end{corollary}

\begin{corollary}\label{sbr4.6}
Let $(R, \m, k)$ be an almost stretched Gorenstein local ring of dimension $d$ and embedding dimension $n$. Let $M$ be a finitely generated $R$-module. 
%Let $\eta : Q \twoheadrightarrow R$ be a minimal Cohen presentation, $\n$ be the maximal ideal of $Q$ and $I = \ker \eta \subset \n^2$. 
Then the following hold.

\begin{enumerate}
\item
If $n - d = 1$, then $P^R_k(t) = \frac{(1 + t)^d}{1 - t}$ and $(1- t)P^R_M(t) \in \Z[t]$,

\item
If $n - d \geq 2$, then $P^R_k(t) = \frac{(1 + t)^d}{1 -(n - d)t + t^2}$ and $(1+t)^{n - d}(1 - (n - d)t + t^2)P^R_M(t) \in \Z[t]$,

\item
If $\Ext^i(M, M) = 0$ for all $i \geq 1$, then $M$ is a free $R$-module.
\end{enumerate}
\end{corollary}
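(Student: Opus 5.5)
The plan is to reduce to the Artinian case by going modulo a suitable regular sequence, and then quote Theorem \ref{intro3.5}. By Definition \ref{intro0} there is a minimal reduction $\underline{x} = x_1, \ldots, x_d$ of $\m$ with $\bar{R} = R/(\underline{x})$ an almost stretched Artinian ring. Since $R$ is Cohen-Macaulay of dimension $d$ and $\underline{x}$ is a system of parameters, it is a regular sequence, so $\bar{R}$ is again Gorenstein.

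The first point to check is that the $x_i$ are part of a minimal generating set of $\m$, so that $\edim(\bar{R}) = n - d$ and the last part of Lemma \ref{ds4.5} applies. If some $x_i$ lay in $\m^2$ we could not conclude $P^{\bar{R}}_k = P^R_k/(1+t)^d$. I expect this to be the main subtlety. A minimal reduction of $\m$ need not consist of elements outside $\m^2$ in general. Here, however, $\m^{r+1} = (\underline{x})\m^r$, and a standard argument shows that a minimal reduction of $\m$ can be taken to be superficial of degree one, i.e. $x_i \in \m\setminus\m^2$ linearly independent modulo $\m^2$. One would like to argue directly: $\m^2 \subset$ the preimage of $\bar\m^2$, with $\mu(\bar\m^2)=2$. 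If the $x_i$ were dependent modulo $\m^2$, then $\edim(\bar R)>n-d$. That is not a contradiction by itself, but the almost stretched hypothesis together with Theorem \ref{intro3.5} would then give the formulas with $\edim(\bar R)$ in place of $n-d$. So to get the stated formula I will take as part of the standard setup that the reduction consists of $d$ elements in $\m\setminus\m^2$ independent modulo $\m^2$, which holds after passing to a general minimal reduction, possibly after the harmless extension $R[u]_{\m R[u]}$ to make $k$ infinite. That extension preserves Betti numbers and all the hypotheses.

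With $\underline{x}$ a regular sequence of elements of $\m\setminus\m^2$, independent modulo $\m^2$, apply Lemma \ref{ds4.5} $d$ times, successively killing $x_1, x_2, \ldots$. This gives $P^R_k(t) = (1+t)^d P^{\bar R}_k(t)$. It also shows that any polynomial $d(t)$ with $d(t)P^{\bar R}_N(t)\in\Z[t]$ for all $\bar R$-modules $N$ works for all $R$-modules. Theorem \ref{intro3.5} applied to $\bar R$ with $\edim(\bar R) = n-d$ and $\mu(\bar\m^2)=2$ then gives parts (1) and (2). In the case $n-d=1$ the ring $\bar{R}$ is stretched, which still falls under $\mu(\bar\m^2)\le 2$.

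For (3) it suffices to show $\pd_R M<\infty$; then vanishing of $\Ext^{\pd M}(M,M)$ is impossible unless $\pd M = 0$.
\begin{itemize}
\item If $n-d\le 2$, the codepth is at most $2$ and $R$ is Gorenstein, so $R$ is a complete intersection, and \cite[Theorem 4.2]{avbu} applies.
\item If $n-d\ge3$, then $1-(n-d)t+t^2$ is irreducible over $\Z$, since its discriminant $(n-d)^2-4$ is not a square. Apply Proposition \ref{ar1} with $p(t)=1-(n-d)t+t^2$, $q(t)=(1+t)^{n-d}$, $r(t)=1$. This gives that $M$ has finite projective dimension or finite injective dimension, and these are equivalent over the Gorenstein ring $R$.
\end{itemize}
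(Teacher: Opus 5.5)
Your route is the paper's: the corollary is obtained there by applying Lemma~\ref{ds4.5} $d$ times along $\underline{x}$ and then Theorem~\ref{intro3.5} to the Artinian almost stretched Gorenstein ring $R/(\underline{x})$. Part (3) is handled as you do, by Avramov--Buchweitz when $n-d\le 2$ and by Proposition~\ref{ar1} with $p(t)=1-(n-d)t+t^2$, $q(t)=(1+t)^{n-d}$, $r(t)=1$ when $n-d\ge 3$.

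The step that does not hold as written is your resolution of the ``main subtlety''. You may not pass to a general minimal reduction, even after enlarging $k$. The almost stretched hypothesis is attached to the particular $\underline{x}$ of Definition~\ref{intro0}, and nothing guarantees that $R/(\underline{x}')$ is almost stretched for another reduction $\underline{x}'$.

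No replacement is needed, because any reduction of $\m$ generated by $d=\dim R$ elements is already part of a minimal generating set of $\m$. Let $G=\gr_{\m}(R)$ and let $x_i^*$ be the image of $x_i$ in $G_1=\m/\m^2$, which is zero if $x_i\in\m^2$. From $\m^{r+1}=(\underline{x})\m^r$ you get $\m^j=(\underline{x})\m^{j-1}$ for all $j\ge r+1$. Reading this in $G$ gives $G_j=(x_1^*,\ldots,x_d^*)G_{j-1}$ for $j\ge r+1$; a term $x_im_i$ with $x_i\in\m^2$ and $m_i\in\m^{j-1}$ lies in $\m^{j+1}$, so it contributes nothing. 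Hence $G/(x_1^*,\ldots,x_d^*)$ has finite length, so $(x_1^*,\ldots,x_d^*)$ is primary to $G_+$ and has height $\dim G=d$. By Krull's height theorem the $x_i^*$ span a $d$-dimensional subspace of $\m/\m^2$.

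Consequently $x_i\notin(x_1,\ldots,x_{i-1})+\m^2$ for each $i$. So the last part of Lemma~\ref{ds4.5} applies at every stage of the iteration, $P^R_k(t)=(1+t)^dP^{R/(\underline{x})}_k(t)$, and $\edim(R/(\underline{x}))=n-d$. The paper leaves this point implicit. With this fact in place of your workaround, the rest of your argument is complete.

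Note also that case (1) is vacuous. If $n-d=1$, then $\edim(R/(\underline{x}))=1$ forces $\mu(\bar{\m}^2)\le 1$, so the quotient cannot be almost stretched.
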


%%%%%%%%%%%%%%%%%%%%%%%%%%%%%%%%%%%%%%%%%%%%%%%%%%%%%%%%%%%

\section{Revisiting known results}

In this section, we provide proofs of examples (1) and (3) in the introduction. As the title of the section suggests, these examples   were found by other authors. Our proofs are different and obtained using Theorem \ref{intro2}. Moreover our versions are slightly stronger, cf. Remark  \ref{rem}.
We recall the following which is proved in \cite[Theorem 1]{lev1}. 

\begin{theorem}\label{ggc1}
Let $(R, \m)$ be an Artinian Gorenstein local ring of embedding dimension $n$ and $K^R$ be the Koszul complex on a minimal set of generators of the maximal ideal $\m$. Set $\soc(R) = (s)$, $\bar{R} = R/ sR$ and $K^{\bar{R}} = \bar{R} \otimes_R K^R$, the Koszul complex of $\bar{R}$. Define a DG algebra structure on  $\displaystyle K^R \oplus \frac{K^R}{\m K^R}[-1]$ with multiplication : $(k, \bar{l})(k', \bar{l'}) = (kk', \overline{lk'} + (-1)^{\deg(k)}\overline{kl'})$ and differential : $\partial(k, \bar{l}) = (\partial(k) + sl, 0)$. Then the chain map $\displaystyle K^R \oplus \frac{K^R}{\m K^R}[-1] \rightarrow K^{\bar{R}}$, $(k, \bar{l}) \mapsto \bar{k}$ is a quasi-isomorphism. If $\displaystyle \bar{H} = \frac{\Ho(K^R)}{\Ho_n(K^R)}$, $\displaystyle \bar{K} = \frac{K^R \otimes_R k}{K_n^R \otimes_R k}[-1]$, then $\Ho(K^{\bar{R}}) = \bar{H} \lJoin \bar{K}$.
\end{theorem}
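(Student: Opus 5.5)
The plan is to treat the statement in two parts: first the quasi-isomorphism, via a short exact sequence of complexes whose kernel is acyclic, and then the algebra structure of $\Ho(K^{\bar{R}})$, via the long exact sequence obtained by comparing $K^R$ with $A := K^R \oplus \frac{K^R}{\m K^R}[-1]$.

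\textbf{Step 1: $A$ is a DG algebra.} Since $\soc(R)=(s)$ we have $s\m=0$, so $sl$ depends only on $\bar{l}\in K^R/\m K^R$ and the differential is well defined. Next, $\partial(sl)=s\partial(l)\in s\m K^R=0$ because $K^R$ is minimal, which gives $\partial^2=0$. The Leibniz rule is a direct sign check using the given product; the product is the trivial extension of $K^R$ by the $K^R$-module $K^R/\m K^R$ shifted by one.

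\textbf{Step 2: the quasi-isomorphism.} The map $\pi:(k,\bar{l})\mapsto \bar{k}$ is a surjective DG algebra map, because $s=0$ in $\bar{R}$. Its kernel is $sK^R\oplus \frac{K^R}{\m K^R}[-1]$, with differential $(sa,\bar{l})\mapsto (sl,0)$; here $\partial(sa)=0$ as in Step 1. Since $\soc(R)\cong k$, multiplication by $s$ gives an isomorphism of complexes $K^R/\m K^R\to sK^R$, where both sides have zero differential. So the kernel is the mapping cone of an isomorphism and is therefore acyclic. The long exact sequence then shows that $\pi$ is a quasi-isomorphism.

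\textbf{Step 3: additive structure of the homology.} Use the exact sequence $0\to K^R\to A\to (K^R\otimes_R k)[-1]\to 0$. Its connecting map $\delta: K^R_i\otimes k\to \Ho_i(K^R)$ is $\bar{l}\mapsto \mathrm{cls}(sl)$. By \cite[Lemma 1.2]{lev1} (quoted in \S2), $sK^R_i\subset \Bo_i(K^R)$ for $i<n$, so $\delta=0$ in degrees $<n$. In degree $n$, $\delta$ is an isomorphism onto $sK^R_n=\Ho_n(K^R)$. Therefore there is an exact sequence
\[0\to \bar{H}\to \Ho(A)\to \bar{K}\to 0,\]
which is split over $k$.

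\textbf{Step 4: multiplicative structure, the main obstacle.} It remains to show that $\Ho(A)$ is the trivial extension, i.e. that $\bar{K}\cdot\bar{K}=0$ and $\I\bar{H}\cdot\bar{K}=0$ for a suitable choice of lifts.
\begin{enumerate}
\item For $\bar{l}$ of degree $i<n$, write $sl=\partial w$ with $w\in(0:\m^2)K^R_{i+1}$, again by \cite[Lemma 1.2]{lev1}, and lift $\bar{l}$ to the cycle $(-w,\bar{l})$.
\item The product of two such lifts is $(ww',\ \overline{-lw'\pm wl'})$. Since $w,w'\in(0:\m^2)K^R$, I would first try to arrange $w\in \m K^R$ as well, so that the second component vanishes modulo $\m$.
\item Then I would show that $ww'$, which lies in $(0:\m^2)^2K^R$, is a boundary in $A$. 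The key point is that $\m\cdot(0:\m^2)\subset\soc(R)$, so this product lands in $sK^R$ in the relevant degrees, and elements of $sK^R$ are boundaries in $A$ via $(0,\bar{l})$.
\item For $h\in \I\bar{H}$ represented by a cycle $c\in\m K^R$, the product $c\cdot(-w,\bar{l})=(-cw,\overline{cl})=(-cw,0)$, and $cw\in \m(0:\m^2)K^R\subset sK^R$ is again such a boundary.
\end{enumerate}
Getting the choices of $w$ right in items 2 and 3 is where I expect the real work to lie.
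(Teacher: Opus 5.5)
The paper gives no proof of this statement; it only quotes it from Avramov--Levin \cite[Theorem 1]{lev1}, so there is no internal argument to compare against. Judged on its own, your proposal is correct, and it relies on the same input \cite[Lemma 1.2]{lev1} that the paper uses elsewhere.

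Steps 1--3 are sound. The kernel of $\pi$ is the cone of the isomorphism $K^R/\m K^R\xrightarrow{\cdot s} sK^R$. The connecting map of $0\to K^R\to A\to (K^R\otimes_R k)[-1]\to 0$ is $\bar l\mapsto \mathrm{cls}(sl)$, which vanishes below degree $n$ and is bijective in degree $n$. In degree $0$ the quoted range $1\le i\le n-1$ of \cite[Lemma 1.2]{lev1} does not literally apply, but nothing is lost. For $\delta$ you only need $s\in\m=\Bo_0(K^R)$. For Step 4 you can write $s=\sum_j x_jb_j$ with $b_j\in\m^{\ell-1}\subseteq(0:\m^2)$, where $\ell=\lo(R)$.

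The step you flag as the real work is not an obstacle, and no choice of $w$ is needed. If $\m^2\neq 0$, then $(0:\m^2)\subseteq\m$, since a unit cannot annihilate $\m^2$. So every $w\in(0:\m^2)K^R$ automatically lies in $\m K^R$.

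\begin{itemize}
\item Take two lifts $(-w,\bar l)$ and $(-w',\bar l')$. In their product, the terms $\overline{lw'}$ and $\overline{wl'}$ of the second component vanish.
\item The first component satisfies $ww'\in(0:\m^2)^2K^R\subseteq\m(0:\m^2)K^R\subseteq sK^R$, using $\m(0:\m^2)\subseteq(0:\m)=(s)$.
\item This holds in every degree, so $(ww',0)=\partial(0,\bar l'')$ where $sl''=ww'$.
\item Your item 4 is the same computation. For $c\in\Zi_{\ge1}(K^R)$ one has $c\in\m K^R$ by \cite[Lemma 4.1.6]{av}. Then $(c,0)(-w,\bar l)=(-cw,0)$ with $cw\in sK^R$.
\end{itemize}

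If $\m^2=0$, then $\m=\soc(R)$ is one-dimensional, so $n=1$. In that case $\bar H=k$ and $\Ho_2(A)=0$, so there is nothing to check.

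Hence the classes of your lifts span a square-zero ideal killed by $\I\bar H$. Since $\pi$ is a DG algebra quasi-isomorphism, $\Ho(K^{\bar R})\cong\Ho(A)\cong\bar H\lJoin\bar K$ as algebras. This is the same mechanism the paper itself uses in the proof of Lemma \ref{sbr4.5}. There, cycles of the cone are normalized into $(0:\m^2)C\oplus C/\m C$, and all products are pushed into $sC$, whose elements are boundaries.
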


\begin{lemma}\label{sbr4.5}
Let $(R, \m)$ be an Artinian Gorenstein local ring of embedding dimension $n \geq 2$. Let $K^R$ denote the Koszul complex on a minimal set of generators of $\m$ and $C$ denote the quotient of $K^R$ defined by  
\[
C_i = 
\begin{cases}
K^R_i & \text{for} \ 0 \leq i \leq n - 2,\\
\frac{K^R_{n -1}}{\Bo_{n -1}(K^R)} & \text{for} \ i = n - 1,\\
0 & \text{for} \ i = n.
\end{cases}
\]
Then C has a DG algebra structure. 
Assume that $C$ is a Golod DG algebra with natural augmentation. Then  $R/ \soc(R)$ is a Golod ring and $R$ satisfies assertions (1), (2) of Theorem \ref{intro2}.
\end{lemma}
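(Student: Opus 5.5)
First I check that $C$ is a DG algebra quotient of $K^R$, i.e. that the kernel $J$ of $K^R \to C$ is a DG ideal. Here $J = \Bo_{n-1}(K^R) \oplus K^R_n$, sitting in degrees $n-1$ and $n$. It is closed under $\partial$: indeed $\partial(\Bo_{n-1}) = 0$, and $\partial(K^R_n) \subset \Bo_{n-1}$. It is closed under multiplication by $K^R$ because any product landing in degree $\geq n$ lies in $K^R_n \subset J$. For degree zero elements, $R\Bo_{n-1} \subset \Bo_{n-1}$. So $C$ inherits a DG algebra structure. Its homology is
\[ \Ho_i(C) = \Ho_i(K^R) \ \text{for} \ i \le n-2, \qquad \Ho_{n-1}(C) = K^R_{n-1}/\Bo_{n-1}(K^R). \]

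Next I compare $C$ with the DG algebra $D = K^R \oplus \frac{K^R}{\m K^R}[-1]$ of Theorem \ref{ggc1}, which is quasi-isomorphic to $K^{\bar R}$. The plan is to show that $K^{\bar R}$ is Golod, i.e. $\bar R = R/\soc(R)$ is a Golod ring, by showing that $\Ho(K^{\bar R}) = \bar H \lJoin \bar K$ admits a trivial Massey operation built from one on $C$.

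\begin{itemize}
\item On the $\bar H$-part, $\bar H$ agrees with $\Ho(C)$ in degrees $\le n-2$ and sits inside $\Ho_{n-1}(C)$ in degree $n-1$. I take a Massey operation $\mu$ on $C$ adapted to a basis of $\I\Ho(C)$ that extends a basis of $\bar H$.
\item I lift the values $\mu(h_{\lambda_1},\dots,h_{\lambda_p})$ to $K^R$ and hence into $D$. The defining identities hold modulo $J$, so the error terms lie in $\Bo_{n-1} + K^R_n$. In $D$, elements of $sK^R$ are boundaries because of the $sl$ term in the differential.
\item By \cite[Lemma 1.2]{lev1} (quoted in the proof of Theorem \ref{intro2}), $sK^R_i \subset (0:\m^2)\Bo_i(K^R)$. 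Degree-$n$ elements of $\I D$ are killed by multiplying out, since the product of two positive-degree lifts whose degrees add to $\ge n$ lands in $\m K^R_n = $ multiples of $s$ up to boundaries, and these are boundaries in $D$.
\item The $\bar K$-part consists of classes $(0,\bar l)$. All their products with $\I D$ vanish, since $\m\cdot \bar K = 0$ and $\bar K\cdot\bar K = 0$. So I set $\mu$ to be zero on any tuple involving them with length $\ge 2$, after checking that products of lifts of $\bar H$-classes with the chosen cycle representatives $(sl',\bar l)$ are boundaries.
\end{itemize}

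Having shown that $\bar R$ is Golod, I invoke Theorem \ref{intro2}, whose hypothesis is exactly that $R/\soc(R)$ is Golod. This gives assertions (1) and (2).

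The main obstacle is the correction step in degrees $n-1$ and $n$. There, Massey products in $C$ vanish only modulo $\Bo_{n-1}$ and $K^R_n$, and I have to lift them honestly to $D$. I would first try to handle the degree-$(n-1)$ boundary discrepancy by adjusting the lifts by elements of $K^R_n$. Then I would show that the remaining degree-$n$ terms lie in $\m K^R_n = sK^R_n$, which is a boundary in $D$ via the $(0,\bar l)$ component.
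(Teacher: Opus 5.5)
The gap is in your treatment of the $\bar K$-classes, and this is where the real work lies.

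First, your representatives are not cycles. We have $\partial(0,\bar l)=(sl,0)\neq 0$ for $\bar l\neq 0$, and likewise $\partial(sl',\bar l)=(s(\partial l'+l),0)\neq 0$. A $\bar K$-class has to be represented by some $(-e,\bar l)$ with $\partial e=sl$, and you need $e\in(0:\m^2)K^R$; supplying such $e$ is what \cite[Lemma 1.2]{lev1} is really for.

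Second, the products with $\bar K$ vanish in $\Ho(K^{\bar{R}})=\bar H\lJoin\bar K$, but that is a statement about classes. It does not let you set $\mu=0$ on tuples of length $\geq 2$ involving $\bar K$-classes, because that requires the chain-level products to be $0$, not merely boundaries. They need not be $0$. For example, take $R=k[x,y,z]/(x^2-y^2,y^2-z^2,xy,xz,yz)$ and $s=x^2$. The cycle $(xY,0)$ times the representative $(-xX,\bar 1)$ is $(sXY,0)\neq 0$.

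So nonzero values must be chosen, they enter all longer Massey identities, and you need an invariant that makes the induction close. This is the missing idea. The paper's invariant has two parts:
\begin{enumerate}
\item the values of the given Massey operation on $C$ lie in $\m C$, by \cite[Lemma 4.1.6]{av}, since their boundaries lie in $\m^2C$; this kills the $\bar l$-components of all products;
\item all new representatives and all new values are taken in $(0:\m^2)C$.
\end{enumerate}
Then every right-hand side involving a new class lies in $\m(0:\m^2)C\subseteq(0:\m)C=sC\subseteq(0:\m^2)\Bo(C)$. Hence the next value can again be taken in $(0:\m^2)C$. In your $D$ one could instead kill such right-hand sides, which lie in $sK^R\oplus 0$, by elements $(0,\bar y)$; these multiply to zero against every element whose $K^R$-component lies in $\m K^R$. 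Either way, some chain-level argument of this kind is indispensable. The same inclusion $sC\subseteq(0:\m^2)\Bo(C)$ is also what shows that $\Ho(C)$ injects into the homology of the cone, which you need before extending the basis.

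Your concern about degrees $n-1$ and $n$ is handled in the paper by replacing $D=\cn(G)$ with $\cn(H)=C\oplus\frac{C}{\m C}[-1]$. The surjective DG algebra map $\alpha\colon\cn(G)\to\cn(H)$ has exact kernel $0\to K^R_n/\m K^R_n\xrightarrow{\cdot s}K^R_n\xrightarrow{\partial}\Bo_{n-1}(K^R)\to 0$. So it suffices to show that $\cn(H)$ is Golod. Moreover, $C$ sits inside $\cn(H)$ as a DG subalgebra, so the given Massey operation on $C$ is used verbatim with no lifting.

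Working in $D$ directly is possible, but your justification there is wrong. We have $\m K^R_n\neq sK^R_n$, since $n\geq 2$ forces $\lo(R)\geq 2$. Also, the boundaries in $D_n$ are exactly $sK^R_n\oplus 0$, so landing in $\m K^R_n$ does not make an element a boundary. The correct reason is that the degree-$n$ right-hand side is a cycle, and $\Zi_n(K^R)=sK^R_n$.

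Finally, $\Ho_{n-1}(C)=\Zi_{n-1}(K^R)/\Bo_{n-1}(K^R)$, not $K^R_{n-1}/\Bo_{n-1}(K^R)$. Hence $\Ho(C)=\bar H$ exactly.
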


\begin{proof}
The fact that $C$ is a DG algebra is straightforward because $K^R_n \oplus \Bo_{n-1}(K^R)$ is a DG ideal of the Koszul algebra $K^R$. Let $q : K^R \twoheadrightarrow C$ be the quotient map and $\soc(R) = (s)$. 
Let $\displaystyle G : \frac{K^R}{\m K^R} \rightarrow K^R$, $\displaystyle H : \frac{C}{\m C} \rightarrow C$ denote chain maps induced by multiplications by $s$ on $K^R$, $C$ respectively. 
We have $q \circ G = H \circ \bar{q}$ where $\displaystyle \bar{q} : \frac{K^R}{\m K^R} \twoheadrightarrow \frac{C}{\m C}$ is the map induced by $q$. 
Let $\displaystyle \cn(G) = K^R \oplus \frac{K^R}{\m K^R}[-1]$, $\displaystyle \cn(H) = C \oplus \frac{C}{\m C}[-1]$
be the cones of $G$, $H$ respectively. 
Define $\alpha : \cn(G) \twoheadrightarrow \cn(H)$ by $\alpha(k, \bar{l}) = (q(k), \bar{q}(\bar{l}))$. Both  $\cn(G)$, $\cn(H)$ have DG algebra structure and $\alpha$ is a surjective DG algebra homomorphism. The kernel of $\alpha$ is the complex 
\[0 \rightarrow \frac{K^R_n}{\m K^R_n} \xrightarrow{\cdot s} K^R_n \xrightarrow{\partial} {\Bo}_{n-1}(K^R) \rightarrow 0\]
which is exact. Therefore, $\alpha$ is a quasi-isomorphism of DG algebras. By Theorem \ref{ggc1}, $\cn(G)$ is quasi-isomorphic to $K^{R/ \soc(R)}$. Therefore, to show that $R/ \soc(R)$ is a Golod ring it is enough to prove that $\cn(H)$ is a Golod algebra.

Since $C$ is a Golod algebra, there is a  $k$-basis $\mathfrak{b}_C = \{h_{\lambda}\}_{\lambda \in \Lambda}$ of $\Ho_{\geq 1}(C)$ and a function (trivial Massey operation) $\mu : \sqcup_{i = 1}^{\infty} \mathfrak{b}_C^i \rightarrow  C$ such that $\mu(h_\lambda) \in \Zi_{\geq 1}(C)$ with  $cls(\mu(h_\lambda))= h_\lambda$ for all $\lambda \in \Lambda$ and 
\[\partial \mu(h_{\lambda_1}, \ldots ,h_{\lambda_p}) = \sum_{j = 1}^{p-1}\overline{\mu(h_{\lambda_1}, \ldots ,h_{\lambda_j})}\mu(h_{\lambda_{j + 1}}, \ldots ,h_{\lambda_p}).\]
By (2) of \cite[Lemma 4.1.6]{av}, $\{x \in C : \partial(x) \in \m^2 C\} \subset \m C$. Since $\mu(h_\lambda) \in \m C$, by induction on $p$ we conclude that $\mu(h_{\lambda_1}, \ldots ,h_{\lambda_p})  \in \m C$.

By \cite[Lemma 1.2]{lev1}, we have  $\soc(R) K^R_i \subset (0 : \m^2)\Bo_{i}(K^R)$ for $1 \leq i \leq n - 1$ and so we have $sC \subset (0 : \m^2)\Bo(C)$. Let $z \in \Zi(C)$ be such that $(z, 0) = \partial(y_1, \bar{y}_2)$ for $(y_1, \bar{y}_2) \in \cn(H)$. Then $z = \partial(y_1) + s y_2 \in \Bo(C)$. Therefore, the inclusion $C \hookrightarrow \cn(H)$ induces an injective map $\Ho(C) \hookrightarrow \Ho(\cn(H))$. By abuse of notation we write $(c, 0)$ as $c$. It follows that $\mathfrak{b}_C = \{h_{\lambda}\}_{\lambda \in \Lambda}$ is a linearly independent set in $\Ho_{\geq 1}(\cn(H))$ and $ \mu(h_{\lambda_1}, \ldots ,h_{\lambda_p}) \in \m \cn(H), p \geq 1$ satisfy properties above.

We extend $\mathfrak{b}_C$ to a basis $\mathfrak{b}_{\cn(H)} = \{h_{\lambda}\}_{ \lambda \in \Lambda \sqcup \Lambda'}$ of $\Ho_{\geq 1}(\cn(H))$. Let $h_{\lambda'}$, $\lambda' \in \Lambda'$ be the homology class of $(c_{\lambda'}, \bar{d}_{\lambda'}) \in \Zi_{\geq 1}(\cn(H))$.  Now $\partial(c_{\lambda'}, \bar{d}_{\lambda'}) = 0$ implies that $\partial(c_{\lambda'}) + s d_{\lambda'} = 0$. We have $s d_{\lambda'} = \partial(e_{\lambda'})$ for some $e_{\lambda'} \in (0 : \m^2)C$. We write $(c_{\lambda'}, \bar{d}_{\lambda'}) = (c_{\lambda'} + e_{\lambda'}, 0) + ( - e_{\lambda'}, \bar{d}_{\lambda'})$. Note that $c_{\lambda'} + e_{\lambda'} \in \Zi(C)$. Therefore, after subtracting suitable $R$-linear combinations of $h_{\lambda}$, $\lambda \in \Lambda$ from each $h_{\lambda'}$, $\lambda' \in \Lambda'$ if necessary, we may assume that $h_{\lambda'}$ is a homology class of some cycle in $\displaystyle (0 : \m^2)C \oplus \frac{C}{\m C}$. 

We define $\mu(h_{\lambda'})$, $\lambda' \in \Lambda'$ to be an element in $\displaystyle (0 : \m^2)C \oplus \frac{C}{\m C}$ whose homology class is $h_{\lambda'}$. We extend $\mu$ from $\mathfrak{b}_C^i$ to a Massey operation on $\mathfrak{b}_{\cn(H)}^i$, $i > 1$ such that $\mu : \mathfrak{b}_{\cn(H)}^i \setminus \mathfrak{b}_C^i \rightarrow (0 : \m^2)C$, $i > 1$ by induction on $i$.
Note that $\overline{\mu(h_{\lambda})} \mu(h_{\lambda'}) \in sC \in (0 : \m^2) \Bo(C)$ for $(\lambda, \lambda') \not \in \Lambda \times \Lambda$. We choose $\mu(h_{\lambda}, h_{\lambda'}) \in (0 : \m^2) C$ such that $\partial(\mu(h_{\lambda}, h_{\lambda'})) = \overline{\mu(h_{\lambda})} \mu(h_{\lambda'})$. 
%We define $\mu(h_{\lambda'}, h_{\lambda}) \in (0 : \m^2)C$ for $\lambda \in \Lambda$, $\lambda' \in \Lambda'$ similarly. 
Now assume that $\mu(h_{\delta_1}, \ldots, h_{\delta_i}) \in (0 : \m^2)C$, $(h_{\delta_1}, \ldots, h_{\delta_i}) \in \mathfrak{b}_{\cn(H)}^i \setminus \mathfrak{b}_C^i$ satisfying desired relations are constructed for all $i \leq p$. We choose $(h_{\delta_1}, \ldots , h_ {\delta_{p+1}}) \in \mathfrak{b}_{\cn(H)}^{p+1} \setminus \mathfrak{b}_C^{p+1}$ and observe that $\sum_{j = 1}^{p}\overline{\mu(h_{\delta_1}, \ldots ,h_{\delta_j})}\mu(h_{\delta_{j + 1}}, \ldots ,h_{\delta_{p+1}})$ is an element in $sC$. Therefore, we can choose $\mu(h_{\delta_1}, \ldots ,h_{\delta_{p+1}}) \in (0 : \m^2)C$ such that $\partial(\mu(h_{\delta_1}, \ldots ,h_{\delta_{p+1}})) = \sum_{j = 1}^{p}\overline{\mu(h_{\delta_1}, \ldots ,h_{\delta_j})}\mu(h_{\delta_{j + 1}}, \ldots ,h_{\delta_{p+1}})$. Thus by induction $\mu$ extends to a trivial Massey operation on $\mathfrak{b}_{\cn(H)}$. Therefore, $\cn(H)$ is a Golod algebra and the result follows.
\end{proof}

%\begin{corollary}
%Let $(R, \m)$ be a Gorenstein local ring and $K^R$ denote its Koszul complex. Let $\edim(R) - \dim(R) = n \geq 2$. Assume the complex $C$ defined by $C_i = K^R_i$ for $0 \leq i \leq n-2$, $C_{n-1} = \frac{K^R_{n-1}}{\Bo_{n-1}(K^R)}$ and $C_i = 0$ for $i \geq n$. Then $R$ has the Backelin-Roos property.
%\end{corollary}
%

%Poincar\'e series of Modules over compressed Gorensteing local rings were studied by Rossi and \c Sega in \cite{rossi}. Under the hypothesis of Theorem \ref{intro1}, they proved that if $\eta : Q \twoheadrightarrow R$ is a minimal Cohen presentation, $I = \ker \eta$, $f \in I \setminus \n^{t+1}$, $t = \max\{ i : I \subset \n^i\}$, then $Q/ (f) \twoheadrightarrow R$ is a Golod homomorphism \cite[Theorem 5.1]{rossi}. They also showed that $R/ \soc(R)$ is a Golod ring \cite[Proposition 6.3]{rossi}. We use the ideas developed by them and our Theorem \ref{sbr4.5} to produce a simple and short proof of their main result \cite[Theorem 5.1]{rossi}. Note that our result is stronger.  Our theorem below shows that any choice of $f \in I \setminus \n I$ is enough.
%

We recall the definition of compressed Gorenstein local rings.

\begin{definition}\label{comp}
Let $(R,\m,k)$ be an Artinian Gorenstein local ring of Loewy length $s$ and embedding dimension $n \geq 2$. Set 
$\varepsilon_i = \min \Bigg \{ \dbinom{n-1+s-i}{n-1}, \dbinom{n-1+i}{n-1} \Bigg \}$ for all $i$ with $0 \leq i \leq s$. 
Then it is shown in \cite[Proposition 4.2]{rossi}  that  $l(R) \leq \sum_{i = 0}^n \varepsilon_i$. The ring $R$ is called compressed Gorenstein ring if equality holds.
\end{definition}

We provide a different proof of the result of Rossi and \c Sega \cite[Theorem 5.1]{rossi} below.

\begin{theorem}\label{sbr4.7}
Let $R$ be a compressed Gorenstein local ring such that $\edim(R)  = n \geq 2$ and $\lo(R) = s$, $s \geq 2, s \neq 3$. Then $R/ \soc(R)$ is a Golod ring. Consequently,  $R$ satisfies assertions (1), (2) of Theorem \ref{intro2}.
\end{theorem}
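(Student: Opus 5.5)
By Lemma \ref{sbr4.5}, it suffices to show that the truncated quotient $C$ of the Koszul algebra $K^R$ is a Golod DG algebra with its natural augmentation. Once $R/\soc(R)$ is known to be Golod, Theorem \ref{intro2} immediately gives assertions (1) and (2). So the plan is to build a trivial Massey operation on $C$, using the structure of compressed Gorenstein rings.

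The key input is the known description of the Koszul homology of a compressed Gorenstein ring with $s \neq 3$, due to Rossi and \c Sega \cite{rossi}. Write $t = \min\{i : I \not\subset \n^{i+1}\}$, the initial degree of $I$. For such rings the Hilbert function is determined by the $\varepsilon_i$, and $I$ agrees with $\n^{t}$ modulo the relevant range of degrees. From this one gets that cycles representing $\I\Ho(K^R)$ in degrees $1 \leq i \leq n-1$ can be chosen inside $\m^{a}K^R$ for a suitable $a$. Roughly, one can take $a \geq \lceil (s-1)/2\rceil$, so that $\m^{a}\cdot\m^{a} \subset \m^{s}$ lands in the socle or vanishes. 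The only exception is the pairing into the top degree $n$, and that degree is exactly what $C$ kills.

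\textbf{Step 1.} Identify the homology of $C$. For $i \leq n-3$ we have $\Ho_i(C) = \Ho_i(K^R)$. In degree $n-2$, $\Ho_{n-2}(C)$ is the kernel of $\partial$ on $K^R_{n-2}$ modulo $\Bo_{n-2}(K^R)$, which again equals $\Ho_{n-2}(K^R)$. In degree $n-1$, $\Ho_{n-1}(C) = K^R_{n-1}/\Bo_{n-1}(K^R)$ modulo nothing, since $C_n = 0$. This is the Koszul homology with $\Ho_n$ removed, together with extra classes in degree $n-1$.

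\textbf{Step 2.} Choose representatives $\mu(h_\lambda)$. In degrees $\leq n-2$ take Koszul cycles lying in $\m^{a}K^R$. For degree $n-1$ classes take arbitrary representatives; any product involving a degree $n-1$ element lands in degree $\geq n$, where $C$ is zero.

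\textbf{Step 3.} Show that all products of two representatives vanish in $C$. Then higher Massey operations can be set to zero, because $\mu(h_{\lambda_1},\dots,h_{\lambda_p}) = 0$ for $p \geq 2$ satisfies the defining relations. For $\deg h_\lambda + \deg h_{\lambda'} \geq n$ the product is zero in $C$ automatically. For $\deg h_\lambda + \deg h_{\lambda'} \leq n-1$ the product lies in $\m^{2a}K^R$. Then:
\begin{itemize}
\item if $\m^{2a} = 0$, or the product lands in $K^R_{n-1}$ and is a boundary there, it is zero in $C$;
\item otherwise one needs the product to be a boundary of an element we can use as $\mu(h_\lambda,h_{\lambda'})$, with the next level of products again vanishing.
\end{itemize}
In the second case I would use the bound $2a \geq s-1$ and the Lemma 1.2 of \cite{lev1} type fact $\soc(R)K^R_i \subset (0:\m^2)\Bo_i(K^R)$ to absorb the socle contributions. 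Any further products with elements of $(0:\m^2)$ then fall into $\m^{s+1}=0$.

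\textbf{Main obstacle.} The hardest step is Step 3 in the middle range of degrees. There one must check, using the compressed Hilbert function and the fact that $I$ is generated in degrees $t$ and $t+1$ (or $t$ only, by parity of $s$), that Koszul cycles really admit representatives of high enough $\m$-adic order. The excluded case $s=3$ is exactly where $a=1$ and this fails. My first attempt would be to pass to the associated graded ring, which is compressed graded Gorenstein, to get the order bound for cycles there. I would then lift to $R$ using $\m$-adic filtration arguments, checking separately the parity cases $s = 2t-1$ and $s = 2t$.
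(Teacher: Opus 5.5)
Your reduction to the Golodness of $C$ (via Lemma \ref{sbr4.5}) matches the paper, and so does your filtration level $a=\lceil (s-1)/2\rceil=t-1$ for cycle representatives. But Step 3 has a genuine gap.

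The claim $\m^{a}\m^{a}\subset\m^{s}$ holds only for even $s$. For odd $s=2k+1\geq 5$ you have $a=k$ and $2a=s-1$. You also cannot raise $a$ to $t$. A minimal generator $f=\sum_i c_iy_i$ of $I$ with $f\notin\n^{t+1}$ gives the class of $\sum_i\bar c_iX_i$ in $\Ho_1(K^R)$. That class has no representative in $\m^{t}K^R_1$, since otherwise $f\in(\n^t+I)\n\subset\n^{t+1}$.

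So for odd $s$, the products of your representatives are cycles in $\m^{s-1}K^R$, not in $\soc(R)K^R=\m^{s}K^R$. Levin's inclusion $\soc(R)K^R_i\subset(0:\m^2)\Bo_i(K^R)$ says nothing about such cycles. Your plan therefore gives no reason why these products are boundaries at all, let alone boundaries of elements whose later products stay under control. For even $s$, where $2a=s$, your socle argument does go through once the representatives are in hand. Separately, the opening claim of Step 3, that products of two representatives vanish in $C$, is false in general; they only need to be boundaries.

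The missing ingredient is a second filtration statement: every cycle of degree $<n$ in $\m^{2(t-1)}K^R$ is the boundary of an element of $\m^{t-1}K^R$.
\begin{itemize}
\item The paper takes this from Lemma 4.4 of Rossi and \c Sega, which says the map $\Ho_{<n}(\m^{r+1}K^R)\to\Ho_{<n}(\m^{r}K^R)$ is zero for $r=s+1-t$.
\item The map $\Ho_{<n}(\m^{2t-2}K^R)\to\Ho_{<n}(\m^{t-1}K^R)$ factors through this one whenever $t-1\leq r$ and $r+1\leq 2(t-1)$.
\item With $t=\lceil (s+1)/2\rceil$, these inequalities hold exactly when $s\geq 2$ and $s\neq 3$. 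That is where the hypothesis $s\neq 3$ enters. Your explanation via $a=1$ cannot be right, since $a=1$ also for $s=2$.
\end{itemize}

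The first filtration statement, $\Zi_{\geq1}(K^R)\subset\Bo(K^R)+\m^{t-1}K^R$, needs no passage to the associated graded ring. Since $I\subset\n^t$, you have $R/\m^t=Q/\n^t$. Lemma 1.4 of Rossi and \c Sega then says that $\Ho_{\geq1}(R/\m^t\otimes_QK^Q)\to\Ho_{\geq1}(R/\m^{t-1}\otimes_QK^Q)$ is zero, which gives the inclusion directly.

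With both statements, you build the trivial Massey operation inductively with all values in $\m^{t-1}C$. Each obstruction $\sum_j\overline{\mu(h_{\lambda_1},\ldots,h_{\lambda_j})}\mu(h_{\lambda_{j+1}},\ldots,h_{\lambda_p})$ is a cycle in $\m^{2t-2}C$, hence the boundary of an element of $\m^{t-1}C$. No vanishing of products and no socle argument is needed.

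A minor correction to Step 1: $\Ho_{n-1}(C)=\Zi_{n-1}(K^R)/\Bo_{n-1}(K^R)=\Ho_{n-1}(K^R)$, because the differential $C_{n-1}\to C_{n-2}$ is still induced by $\partial$. There are no extra classes. This is harmless for your argument, since products landing in degree $\geq n$ vanish in $C$.
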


\begin{proof}
We follow notations as set in the proof of Theorem \ref{intro2} and Lemma \ref{sbr4.5}. Let $t = \max\{ i : I \subset \n^i\}$. 
It is proved in \cite[Proposition 4.2]{rossi} that $\displaystyle t = \ceil{\frac{s+1}{2}}$; the least integer not less than $\displaystyle \frac{s+1}{2}$.
By \cite[Lemma 1.4]{rossi}, the map $\Ho_{\geq 1}(R/ \m^t \otimes_Q K^Q) \rightarrow \Ho_{\geq 1}(R/ \m^{t-1} \otimes_Q K^Q)$ induced by the surjection $R/ \m^t \twoheadrightarrow R/ \m^{t-1}$ is a zero map. This implies that
$\Zi_{\geq 1}(K^R) \subset  \Bo_{\geq 1}(K^R) + \m^{t-1}K_{\geq 1}^R$ and therefore $\Zi_{\geq 1}(C) \subset  \Bo_{\geq 1}(C) + \m^{t-1}C$. Thus we find a basis $\mathfrak{b} = \{h_{\lambda}\}_{\lambda \in \Lambda}$ of $\Ho_{\geq 1}(C)$ represented by cycles  in $\m^{t-1}C$.

It is proved in \cite[Lemma 4.4]{rossi} that the map $\psi: \Ho_{<n}(\m^{r+1}K^R) \rightarrow \Ho_{<n}(\m^{r}K^R)$ induced by the inclusion $\m^{r+1} \hookrightarrow \m^r$ is zero for $r = s + 1 - t$. Since, $s \geq 2, s \neq 3$, we have $t - 1 \leq r \leq r + 1 \leq 2(t-1)$. This implies that the map  $\Ho_{<n}(\m^{2(t-1)}K^R) \rightarrow \Ho_{<n}(\m^{t-1}K^R)$ is also zero since it factors through $\psi$. Therefore, we have $\Zi_{< n}(\m^{2t - 2}K^R) \subset \Bo(\m^{t-1}K^R)$ which implies $\Zi(\m^{2t - 2}C) \subset \Bo(\m^{t-1}C)$. 
It is worth pointing out that both the lemmas used here are independent of all other results in \cite{rossi}. 

We construct inductively a trivial Massey operation $\mu : \sqcup_{i = 1}^{\infty} \mathfrak{b}^i \rightarrow  \m^{t-1}C$. Define $\mu(h_{\lambda})$ to be a cycle in $\m^{t-1}C$ such that the homology class of $\mu(h_{\lambda})$ is $h_{\lambda}$. Now $\overline{\mu(h_{\lambda})}\mu(h_{\lambda'}) \in \Zi(\m^{2t - 2}C) \subset \Bo(\m^{t-1}C)$, and so we choose $\mu(h_{\lambda}, h_{\lambda'}) \in \m^{t-1}C$ such that $\partial(\mu(h_{\lambda}, h_{\lambda'})) = \overline{\mu(h_{\lambda})}\mu(h_{\lambda'})$.
The method carries over to the next steps of construction. Therefore, $C$ is a Golod DG algebra and the result follows from Lemma \ref{sbr4.5}.
\end{proof}

\begin{lemma}\label{ged3}
Let $(R, \m ,k)$ be an Artinian Gorenstein local ring but not a complete intersection.  Let $\eta : Q \twoheadrightarrow R$ be a minimal Cohen presentation of $R$ and $I = \ker(\eta)$. Assume that $\mu(I) = r$ and $\edim(R)  = n \leq 3$. Then $R/ \soc(R)$ is a Golod ring, so  $R$ satisfies both assertions (1), (2) of Theorem \ref{intro2}.
If $d_R(t) = 1 - rt^2 - rt^3 + t^5$, then for any finitely generated $R$-module $M$, we have $d_R(t) P^R_M(t) \in Z[t]$. 
The Poincar\'e series of $k$ is given by 
\[P^R_k(t) = \frac{(1 + t)^n}{1 - rt^2 - rt^3 + t^5}.\]
\end{lemma}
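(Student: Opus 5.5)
Since $R$ is Gorenstein and not a complete intersection, we have $n=\edim(R)\geq 3$. Codepth at most two Gorenstein rings are complete intersections, so $n=3$. By Lemma \ref{sbr4.5}, it suffices to show that the truncated DG algebra $C$ is Golod. Here $C_0=R$, $C_1=K^R_1$, $C_2=K^R_2/\Bo_2(K^R)$ and $C_3=0$. Then $R/\soc(R)$ is Golod and Theorem \ref{intro2} gives assertions (1) and (2). The common denominator is $d_R(t)=1-t(P^Q_R(t)-1)+t^{4}(1+t)$. By Buchsbaum--Eisenbud, the minimal $Q$-resolution of $R$ is self-dual with ranks $1,r,r,1$, so $P^Q_R(t)=1+rt+rt^2+t^3$. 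Hence
\[d_R(t)=1-rt^2-rt^3-t^4+t^4+t^5=1-rt^2-rt^3+t^5.\]
The formula for $P^R_k(t)$ then follows from \cite[Proposition 6.2]{rossi}, or from the last statement of Theorem \ref{prl9.5}.

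For the Golod property of $C$, the homology is $\Ho_1(C)=\Ho_1(K^R)$ of dimension $r$, $\Ho_2(C)=\Zi_2(K^R)/\Bo_2(K^R)\oplus$ nothing extra, that is $\Ho_2(C)=\Ho_2(K^R)$, and $\Ho_{\geq 3}(C)=0$. A trivial Massey operation has to kill products landing in degree at least $2$. Products of two degree-one classes land in $C_2$. Any product involving a degree-two element lands in $C_{\geq 3}=0$, and so does every higher Massey product of length at least $3$ or total degree at least $3$. So the only condition to check is the following: for representatives $\mu(h_\lambda),\mu(h_{\lambda'})\in \Zi_1(K^R)$, the product $\overline{\mu(h_\lambda)}\mu(h_{\lambda'})$ is a boundary in $C_2$. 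It is then of the form $\partial(\mu(h_\lambda,h_{\lambda'}))$ with $\mu(h_\lambda,h_{\lambda'})\in C_3=0$. So we need the product in $C_2$ to be $0$, that is, $\Zi_1(K^R)\cdot\Zi_1(K^R)\subset \Bo_2(K^R)$. Equivalently, the multiplication $\Ho_1(K^R)\cdot\Ho_1(K^R)\to\Ho_2(K^R)$ must vanish.

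This vanishing is the main point. By the Avramov--Kustin--Miller / Weyman classification of Koszul homology algebras of codepth $3$ Gorenstein rings that are not complete intersections, the algebra $\Ho(K^R)$ is of type $G(r)$. There $\Ho_1\cdot\Ho_1=0$ and the only nonzero products are $\Ho_1\times\Ho_2\to\Ho_3$, which is the Poincaré duality pairing. I would invoke this structure result directly. Alternatively, one can argue via the Buchsbaum--Eisenbud Pfaffian presentation $I=\mathrm{Pf}_{r-1}(\phi)$ with $r\geq5$ odd. There the DG algebra structure on the minimal resolution has $F_1\cdot F_1\subset \n F_2$, forcing $\Ho_1\Ho_1$ to vanish in $\Ho_2$ after tensoring. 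This step, showing $\Ho_1(K^R)^2=0$, is where the non-complete-intersection hypothesis is used. It is the part I expect to require the external structure theorem. Once it is established, all Massey operations of length at least $2$ can be taken to be zero, $C$ is Golod, and the lemma follows.
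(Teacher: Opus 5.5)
Your proposal is correct and follows the paper's proof. Both reduce via Lemma \ref{sbr4.5} to showing that the length-two algebra $C$ is Golod, which only requires $\Ho_1(K^R)^2=0$: the paper cites \cite[Satz 7]{wiebe}, and you cite the equivalent codepth-three structure theory. Both then compute $d_R(t)$ from $P^Q_R(t)=1+rt+rt^2+t^3$ (the paper gets the middle Betti number from the Euler characteristic instead of Buchsbaum--Eisenbud) and obtain $P^R_k(t)$ from \cite[Proposition 6.2]{rossi}. One small caution: your alternative via the last statement of Theorem \ref{prl9.5} does not by itself show that its polynomial equals $1-rt^2-rt^3+t^5$, so the Rossi--\c Sega citation is what actually gives the formula for $P^R_k(t)$.
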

\begin{proof}
As before, we follow notations as set in the proof of Theorem \ref{intro2} and Lemma \ref{sbr4.5}.
By \cite[Satz 7]{wiebe} we have $\Ho_1(K^R)^2 = 0$ giving $\Ho_1(C)^2 = 0$.  For a proof written in English, we refer to \cite[Corollary 3.4.8]{bruns}.
Now   $C$ is a DG algebra of length $2$. 
Therefore, any basis of $\Ho_{\geq1}(C)$ admits a trivial Massey operation and so $C$ is a Golod algebra. The first part of the result follows from Lemma \ref{sbr4.5}. 

We compute the denominator. The Koszul complex of $R$ is of length three. We observe that $\dim_k \Ho_0(K^R) = 1$; $\dim_k \Ho_1(K^R) = \mu(I) = r$; $\dim_k \Ho_3(K^R) = \dim_k \soc(R) = 1$ and so $\dim_k \Ho_2(K^R) =  r$ since $\sum_{i \geq 0} (-1)^i \dim_k \Ho_i(K^R)$ must be zero. With the notations used in Theorem \ref{intro2}, we have $P^Q_R(t) = \sum_{i = 0}^3 \dim_k \Ho_i(K^R) = 1 + rt + rt^2 + t^3$. Therefore, we have 
\[d_R(t) = 1 - t(P^Q_R(t) - 1) + t^{n+1}(1+t)
= 1 - t^2(r + rt + t^2) + t^4(1 + t)
= 1 - rt^2 - rt^3 + t^5.
\]
The formula for $P^R_k(t)$ follows from a result of Rossi and \c Sega in \cite[Proposition 6.2]{rossi}.
\end{proof}

If $(R, \m ,k)$ is a Gorenstein local ring satisfying the hypothesis of the theorem below, then the rational expression of $P^R_k(t) $ was computed by Wiebe, cf. \cite[Satz 9]{wiebe}. Later Avramov et al. proved that if $(R, \m)$ is any Artinian ring such that $\edim(R) - \depth(R) \leq 3$, then all $R$-modules have rational Poincar\'e series, cf. \cite[Theorem 6.4]{av4}. We prove a weaker version.

%The following is originally due to Wiebe \cite[Satz 9]{wiebe} and Avramov et al. \cite[Theorem 6.4]{av4}.
\begin{theorem}\label{ged4}
Let $(R, \m ,k)$ be a Gorenstein local ring but not a complete intersection such that $\edim(R) - \depth(R) = n \leq 3$. Let $\eta : Q \twoheadrightarrow \hat{R}$ be a minimal Cohen presentation of $R$, $\ker \eta = I$ and $\mu(I) = r$. Then for any $f \in I \setminus \n I$, the induced map $Q/ (f) \twoheadrightarrow \hat{R}$ is a Golod homomorphism. 

Let $d_R(t) = 1 - rt^2 - rt^3 + t^5$. Then $\displaystyle P^R_k(t) = \frac{(1 + t)^{\edim(R)}}{d_R(t)}$ and for any $R$-module $M$ we have $d_R(t) P^R_M(t) \in \Z[t]$.
\end{theorem}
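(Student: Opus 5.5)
The plan is to reduce to the Artinian case treated in Lemma \ref{ged3}, going modulo a maximal regular sequence. First I pass to the completion. Poincar\'e series over $R$ and over $\hat{R}$ coincide for $M$ and $\hat{M}$. Every finitely generated $\hat{R}$-module of the form $\hat{M}$ arises this way, and the Golod property concerns only $\hat{R}$. So we may assume $R = \hat{R} = Q/I$ is complete. The ring $R$ is Gorenstein of depth $d = \dim R$.

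Next, since $k$ may be assumed infinite (by a standard flat extension $R \to R[X]_{\m R[X]}$, which preserves Betti numbers, the Gorenstein property, $\mu(I)$ and the Golod property), I choose a regular sequence $x_1, \ldots, x_d \in \m \setminus \m^2$ that is part of a minimal generating set of $\m$. I lift it to $y_1, \ldots, y_d \in \n \setminus \n^2$, part of a regular system of parameters of $Q$, and which is regular on $Q/(f)$ and $R$. I set $\bar{Q} = Q/(y_1, \ldots, y_d)$, a regular local ring. Then $\bar{R} = R/(\underline{x}) = \bar{Q}/\bar{I}$ is an Artinian Gorenstein ring, not a complete intersection, of embedding dimension $n \leq 3$. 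Since $\underline{y}$ is regular on $R = Q/I$, we have $\bar{I} = (I + (\underline{y}))/(\underline{y}) \cong I/\underline{y}I$. Hence $\mu(\bar{I}) = r$, the image $\bar{f}$ lies in $\bar{I} \setminus \bar{\n}\bar{I}$, and $\bar{Q}$ is a minimal Cohen presentation of $\bar{R}$.

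By Lemma \ref{ged3}, the map $\bar{Q}/(\bar{f}) \to \bar{R}$ is Golod. I also get $P^{\bar{R}}_k(t) = (1+t)^n/d_R(t)$ and $d_R(t)P^{\bar{R}}_N(t) \in \Z[t]$ for all $\bar{R}$-modules $N$. To lift the Golod property, I use that $\underline{y}$ is regular on both $Q/(f)$ and $R$ and consists of elements outside $\n^2$ whose images are minimal generators. The standard fact (\cite[Proposition 3.3.5]{av} iterated, which is the relative version of Lemma \ref{ds4.5}) says that $Q/(f) \to R$ is Golod if and only if the reduction modulo $\underline{y}$ is Golod. Equivalently, in Serre's bound both $P^{R}_k$ and $P^{Q/(f)}_k$ are multiplied by $(1+t)^d$, while $P^{Q/(f)}_R = P^{\bar{Q}/(\bar{f})}_{\bar{R}}$ since the Koszul-type reduction of resolutions is exact. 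This gives the first assertion.

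For the rest, $P^R_k(t) = (1+t)^d P^{\bar{R}}_k(t) = (1+t)^{\edim R}/d_R(t)$. Applying Lemma \ref{ds4.5} $d$ times, once for each $x_i$, gives $d_R(t)P^R_M(t) \in \Z[t]$ for every $R$-module $M$. Alternatively, the second assertion follows directly from Theorem \ref{prl9.5} applied to the Golod map from the hypersurface $Q/(f)$. The main obstacle is the bookkeeping showing that the Golod property of $Q/(f)\to R$ is equivalent to that of the reduction. I would handle it by checking equality of the two sides of Serre's inequality after multiplying by $(1+t)^d$, using $P^{Q/(f)}_R = P^{\bar{Q}/(\bar f)}_{\bar{R}}$.
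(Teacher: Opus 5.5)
Your proof is correct. Its skeleton is the paper's: reduce modulo a maximal regular sequence of minimal generators to the Artinian case of Lemma \ref{ged3}, then lift back. What differs is how you lift the Golod property.

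The paper never reduces the Cohen presentation. It uses Theorem \ref{prl7} to restate ``$Q/(f)\to\hat R$ is Golod'' as ``$K^R\langle T\mid \partial T=z\rangle$ is a Golod DG algebra'', where $z$ is the non-boundary cycle corresponding to $f$. It then extends the Koszul quasi-isomorphism $K^R\to K^S$ to a surjective quasi-isomorphism $K^R\langle T\rangle\to K^S\langle T\rangle$. Finally it pulls a trivial Massey operation back from the Artinian side.

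You instead reduce the whole situation $Q\to Q/(f)\to R$ modulo $\underline y$ and compare the two sides of Serre's inequality numerically. Both $P^R_k$ and $P^{Q/(f)}_k$ pick up the factor $(1+t)^d$, while $P^{Q/(f)}_R=P^{\bar Q/(\bar f)}_{\bar R}$. Hence equality holds before reduction if and only if it holds after.

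Your route is more elementary. It uses only the definition of a Golod homomorphism and standard behaviour of Poincar\'e series modulo regular elements outside $\m^2$, and the formula for $P^R_k$ falls out of the same computation. The paper's route avoids all bookkeeping about $f$: that $q(z)$ is still a non-boundary cycle is automatic because $q$ is a quasi-isomorphism. The price is the (implicit) fact that the Golod property of DG algebras lifts along surjective quasi-isomorphisms.

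Two small points on your bookkeeping.

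First, the regularity of $\underline y$ on $Q/(f)$, which you assert, deserves a line. We have $I\cap(\underline y)=\underline yI\subseteq\n I$ and $f\notin\n I$, so $\bar f\neq 0$ in the domain $\bar Q$. Hence $\underline y,f$ is a $Q$-regular sequence, and so is its permutation $f,\underline y$.

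Second, the passage to an infinite residue field is unnecessary. Prime avoidance with the single non-prime ideal $\m^2+(x_1,\dots,x_{i-1})$, together with the associated primes of $R/(x_1,\dots,x_{i-1})$, produces the required regular sequence of minimal generators over any residue field. This is why the paper can choose the sequence directly.
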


\begin{proof}

Let $\dim(R) = \depth(R) = d$ and maximal ideal $\m$ be minimally generated by $x_1, \ldots, x_e$ such that $x_1, \ldots, x_d$ form an $R$-sequence. Then $S = R/ (x_1, \ldots, x_d)$ is an Artinian Gorenstein local ring and $\edim(S) = e - d = n \leq 3$. Let $K^R = R \langle X_i : \partial(X_i) = x_i, 1 \leq i \leq n \rangle$ and $K^S = S \langle X_i : \partial(X_i) = x_i, d + 1 \leq i \leq n \rangle$ denote the Koszul complexes. 
The quotient map $q : K^R \twoheadrightarrow K^S$ 
is  a quasi-isomorphism, cf. \cite[Lemma 4.1.6]{av}. 

To prove that the map $Q/ (f) \twoheadrightarrow \hat{R}$ is a Golod homomorphism for any $f \in I \setminus \n I$,  is equivalent to show that for any cycle $z \in \Zi_1(K^R) \setminus \Bo_1(K^R)$, the semi-free extension $K^R\langle T \ | \ \partial(T) = z \rangle$ is a Golod algebra. Now one observes that $q$ extends to a surjective quasi-isomorphism 
\[\tilde{q} : K^R\langle T | \partial(T) = z \rangle \twoheadrightarrow K^S\langle T | \partial(T) = q(z) \rangle,\]
see for instance \cite[Proposition 1.3.5]{gul}. Lemma \ref{ged3} applies to $S$. We conclude that the image of $\tilde{q}$ is a Golod algebra. Therefore, $K^R\langle T \ | \ \partial(T) = z \rangle$ is a Golod algebra.

The statement about Poincare series is an immediate consequence of Lemmas \ref{ds4.5}, \ref{ged3}. 
%It remains only  to show the existence of Golod homomorphism.
\end{proof}

\begin{remark}\label{rem}
In both Theorems \ref{sbr4.7}, \ref{ged4}, we constructed a Golod homomorphism from a hypersurface ring which is a quotient of  an arbitrary generator belonging to a minimal generating set of the defining ideal. Thus both theorems are slightly stronger than their earlier versions. 
\end{remark}

%%%%%%%%%%%%%%%%%%%%%%%%%%%%%%%%%%%%%%%%%%%%%%%%%%%%%
\begin{acknowledgement}
Work on this article started as a result of discussions with Liana M. \c Sega during the conference in honour of Craig Huenke held in Ann Arbor in July 2016. I would like to thank her for clarifications on her results about Auslander-Reiten conjecture. 
I sincerely thank the anonymous referee for  suggesting the proof of Lemma \ref{acg3}. 
%A detailed list of suggestions, corrections advised by the referee improved the exposition in a significant way.
I owe many thanks to Matteo Varbaro for his inspiration and encouragement throughout.
This research is supported by the INdAM-COFUND-2012 Fellowship cofounded by Marie Curie actions, Italy.
 \end{acknowledgement}

\end{document}